 \title{Complements on Furtw\"angler$'$s 
second theorem \\ \, and Vandiver$'$s cyclotomic integers} 
\author{Roland Qu\^eme}
\newtheorem{thm}{Theorem}[section]
\newtheorem{cor}[thm]{Corollary}
\newtheorem{lem}[thm]{Lemma}
\newtheorem{conj}{Conjecture}
\newtheorem{defi}{Definition}
\newtheorem{notas}{Notations}
\newtheorem{rema}{Remark}
\font\mathbb=msbm10
\newcommand{\F}{\mbox{\mathbb F}}
\newcommand{\Q}{\mbox{\mathbb Q}}
\newcommand{\Z}{\mbox{\mathbb Z}}
\newcommand{\be}{\begin{equation}}
\newcommand{\ee}{\end{equation}}
\newcommand{\bd}{\begin{displaymath}}
\newcommand{\ed}{\end{displaymath}}
\newcommand{\bn}{\begin{enumerate}}
\newcommand{\en}{\end{enumerate}}
\newcommand{\mk}{\mathfrak}
\newcommand{\ml}{\mathcal}
\newcommand{\mf}{\mathbf}
\newcommand{\ov}{\bar}
\date{2011 Nov  21}
\begin{document}
\maketitle
\tableofcontents
\maketitle
\begin{abstract}
This article  deals with  a conjecture generalizing  the second case of 
Fermat's Last Theorem, called $SFLT2$ conjecture:
{\it Let $p>3$ be a prime, $K:=\Q(\zeta)$
the $p$th cyclotomic field and $\Z_K$ its ring of integers.
The diophantine  equation 
$(u+v\zeta)\Z_K=\mk w_1^p$,
with  $u,v\in\Z\backslash\{0\}$ coprime,  $uv\equiv 0 \mod p$ 
and  $\mk w_1$  
ideal of  $\Z_K$, has no solution.}
Assuming that $SFLT2$ fails for $(p,u,v)$, 
let  $q$ be an odd prime not dividing  $uv$,  $n$ the order of 
$\frac{v}{u}\mod q$, $\xi$ a primitive
$n$th
root of unity and $M:=\Q(\xi,\zeta)$.
The aim of this   complement  of the article [GQ]  
of G. Gras and R. Qu\^eme on the same topic, is
to exhibit some strong  properties of the decomposition of  the primes
$\mk Q$  of $\Z_M$ 
over $q$ in certain Kummer $p$-extensions of the field $M$ with the aim 
to derive from them   a conjecture implying SFLT2 and  a  weak  conjecture
implying   
that the  SFLT2 equation can always take    the    reduced form  $u+\zeta
v\in K^{\times p}.$

\end{abstract}
\begin{abstract}

Cet article  traite d'une conjecture g\'en\'eralisant  le second cas 
du dernier th\'eor\`eme de Fermat, ci-apr\`es conjecture $SFLT2$:
{\it   Soit  un nombre premier $p>3$ , $K:=\Q(\zeta)$ le $p$-i\`eme corps
cyclotomique
 et  $\Z_K$ son anneau d'entiers. L'\'equation  diophantienne
$(u+v\zeta)\Z_K=\mk w_1^p$,
avec   $u,v\in\Z\backslash\{0\}$ premiers entre eux,  $uv\equiv 0\mod  p$
et   $\mk w_1$ 
id\'eal entier de   $\Z_K$,  n'a pas de  solution.}
Supposant   SFLT2  fausse pour $(p,u,v)$, 
soient $q$ un  nombre premier ne divisant pas $uv$, 
$n$  l'ordre  de $\frac{v}{u}\mod  q$, $\xi$ une racine  primitive
$n$-i\`eme
de l'unit\'e et  $M:=\Q(\xi,\zeta)$.
L' objectif de ce compl\'ement de  l'article [GQ] de G. Gras et R. Qu\^eme
sur le m\^eme sujet est 
de mettre en \'evidence certaines propri\'et\'es fortes de
d\'ecomposition des
id\'eaux premiers $\mk Q$ de $\Z_M$ au  dessus de  $q$ 
dans  certaines  $p$-extensions  de Kummer du corps  $M$ avec l'objectif  
d'en d\'eduire   une  conjecture impliquant SFLT2 et  une  conjecture
faible    impliquant  que l'\'equation SFLT2 
peut toujours se mettre sous la forme 
r\'eduite $u+\zeta v\in K^{\times p}$.
\footnote{

- {\it Keywords:} Fermat's Last Theorem, cyclotomic fields, cyclotomic
units, 
class field theory, Vandiver's and Furtw\"angler's theorems

- {\it AMS subject classes :} 11D41, 11R18, 11R37}
\end{abstract}

\maketitle
\section{  Introduction } \label{s1012111}

In all this article we denote by $\zeta$ the $p$th primitive root of unity 
defined by $\zeta:=e^{\frac{2\pi i}{p}}$.
In [Gr2, Conj.\,1.5],  G. Gras  has given  a conjecture  which
implies Fermat's Last Theorem (FLT): we recall here this conjecture which
will be called {\it Strong Fermat's Last Theorem} (SFLT).

\begin{conj}\label{cj1}  Let $p$ be an odd prime,  set $K = \Q(\zeta)$ 
and ${\mathfrak p} = (\zeta-1)\,\Z[\zeta]$. Then the equation 
    $$(u+v \,\zeta)\,\Z[\zeta] = {\mathfrak p}^\delta\, {\mathfrak w}_1^p$$
    in coprime integers $u,\,v$, where  $\delta$ is any integer $\geq 0$ 
    and  ${\mathfrak w}_1$ is any integral ideal of $K$, 
    has no solution for $p>3$ except the trivial  ones for which
   $u+v \,\zeta = \pm 1$, $\pm \zeta$, $\pm (1+\zeta)$, 
   or  $\pm (1-\zeta)$.
%~\hfill\fin
    \end{conj}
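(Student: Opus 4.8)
The plan is to run a Kummer descent on the ideal equation and then analyse the resulting decomposition $u+v\zeta=(\text{unit})\cdot(1-\zeta)^{\delta}\cdot(p\text{-th power})$ one Galois eigenspace at a time. First I would take $N_{K/\Q}$: since $N_{K/\Q}(u+v\zeta)=\prod_{a=1}^{p-1}(u+v\zeta^{a})=(u^{p}+v^{p})/(u+v)$ and $N(\mathfrak p)=p$, the ideal equation of Conjecture~\ref{cj1} yields the rational identity $(u^{p}+v^{p})/(u+v)=\pm p^{\delta}w^{p}$ with $w=N(\mathfrak w_1)$. Writing $u+v\zeta=(u+v)+v(\zeta-1)$ shows that $\mathfrak p\mid(u+v\zeta)$ precisely when $p\mid u+v$ and that $v_{\mathfrak p}(u+v\zeta)\in\{0,1\}$, so we may assume $\delta\in\{0,1\}$ with $\mathfrak w_1$ prime to $\mathfrak p$. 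Because $\gcd(u,v)=1$, the conjugate ideals $(u+v\zeta^{a})$, $1\le a\le p-1$, are pairwise coprime away from $\mathfrak p$; hence so are the $\sigma_a\mathfrak w_1$ in $(u+v\zeta^{a})=\mathfrak p^{\delta}(\sigma_a\mathfrak w_1)^{p}$. Finally $\mathfrak w_1^{p}=(u+v\zeta)\mathfrak p^{-\delta}$ is principal and $\mathfrak p=(1-\zeta)$ is principal, so $[\mathfrak w_1]\in\mathrm{Cl}_K[p]$ and the Kummer extension $K(\sqrt[p]{u+v\zeta})/K$ is unramified away from $\mathfrak p$, every other prime entering $(u+v\zeta)$ with multiplicity divisible by $p$.

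Second, I would settle the regular case, $p$ not dividing $h_K$. Then $\mathrm{Cl}_K[p]=0$, so $\mathfrak w_1=(\gamma)$ and $u+v\zeta=\varepsilon\,(1-\zeta)^{\delta}\gamma^{p}$ with $\varepsilon\in\Z[\zeta]^{\times}$. Two classical inputs then apply: a $p$-th power $\gamma^{p}$ is congruent modulo $\mathfrak p^{p}$ to the rational integer $c^{p}$, where $c\equiv\gamma\bmod\mathfrak p$; and Kummer's Lemma, which for regular $p$ asserts that a unit congruent to a rational integer modulo $p$ is a $p$-th power of a unit. Dividing the relation for $u+v\zeta$ by its complex conjugate $u+v\zeta^{-1}$ removes $(1-\zeta)^{\delta}$ up to a root of unity and leaves $\varepsilon/\overline{\varepsilon}$ times a $p$-th power; $\varepsilon/\overline{\varepsilon}$ has all archimedean absolute values equal to $1$, hence equals some $\pm\zeta^{k}$, and comparing the $\mathfrak p$-adic expansions of $u+v\zeta$ and $u+v\zeta^{-1}$ then imposes strong congruences on $u$ and $v$ modulo successive powers of $p$ --- this is the Furtw\"angler--Mirimanoff type of argument, and it is here that the complements on Furtw\"angler's second theorem announced in the title should be brought to bear. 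Running this descent to completion, together with the size constraint $p^{\delta}w^{p}=|(u^{p}+v^{p})/(u+v)|$, should leave only the degenerate pairs $(u,v)\in\{(\pm1,0),(0,\pm1),\pm(1,1),\pm(1,-1)\}$, i.e. $u+v\zeta\in\{\pm1,\pm\zeta,\pm(1+\zeta),\pm(1-\zeta)\}$.

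The irregular case $p\mid h_K$ is the main obstacle, and is why SFLT is only a conjecture. Assuming Vandiver's conjecture (verified in every range where the problem is examined), $[\mathfrak w_1]$ lies in $\mathrm{Cl}_K^{-}[p]$, hence in the sum of the eigenspaces $\mathrm{Cl}_K^{(\omega^{1-i})}$ over odd $i$ with $p\mid B_i$ (Herbrand--Ribet). The remaining task is to show that the unramified-away-from-$\mathfrak p$ behaviour of $K(\sqrt[p]{u+v\zeta})/K$, through Kummer duality and the reflection theorem, confines $[\mathfrak w_1]$ to eigenspaces that are in fact trivial --- so that $\mathfrak w_1$ is principal and one is back in the regular-case analysis. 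For any single $p$ this is a finite computation in the cyclotomic units and the class group, but handling all irregular $p$ uniformly is beyond present cyclotomic technology (and is not delivered by modularity, since the family $(u+v\zeta)$ is not a Fermat curve). The realistic conclusion is therefore conditional: assume the statement that this paper goes on to formulate, on the decomposition of the primes $\mathfrak Q\mid q$ of $\Z_M$ in appropriate Kummer $p$-extensions of $M=\Q(\xi,\zeta)$ --- precisely the ingredient designed to annihilate the surviving eigenspaces --- and then SFLT follows by combining it with the unconditional regular-case argument above.
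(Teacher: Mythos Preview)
The statement you are addressing is Conjecture~\ref{cj1} (SFLT), and the paper does not prove it; it is explicitly recorded as an open conjecture due to Gras. There is therefore no ``paper's own proof'' to compare against. The paper's contribution is not a proof of SFLT but an analysis of what must hold if SFLT2 \emph{fails}: it derives splitting constraints on auxiliary primes $q$ in Kummer $p$-extensions of $M=\Q(\xi,\zeta)$ (Theorem~\ref{t1} and its corollaries), and from these formulates sub-conjectures (Conjectures~\ref{cj1109041}, \ref{cj1012211}, \ref{cj3}) whose truth would imply SFLT2 or reductions of it.

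Your proposal is not a proof either, and to your credit you say so in the final paragraph. But the write-up is misleading in one respect: you present the regular case as essentially settled (``Running this descent to completion\dots should leave only the degenerate pairs''), when in fact SFLT is \emph{not} known even for regular $p$. What is known for regular $p$ is only that a counterexample to SFLT2 must satisfy $u+\zeta v\in K^{\times p}$ (this is the content of the remark before Conjecture~\ref{cj1012211}); Kummer's Lemma and the congruence gymnastics you sketch do not by themselves force $(u,v)$ down to the eight trivial pairs. The Furtw\"angler--Mirimanoff style constraints you allude to give congruences on $u,v$, not a finiteness statement. So the ``regular-case argument above'' that you propose to fall back on at the end is itself a gap, not a completed module.

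If your intent was to outline a plausible strategy rather than a proof, it would be better framed as such, and it would be worth noting that the paper's actual program is orthogonal to the eigenspace-annihilation idea you sketch: the paper does not try to kill class-group components via reflection, but instead manufactures, for each hypothetical counterexample $(p,u,v)$, an infinite supply of primes $q$ forced to split completely in large Kummer extensions---a density obstruction rather than a structural one.
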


The cases $uv(u+v)\not\equiv 0\mod p, \ uv\equiv 0\mod p$, and $u+v\equiv
0\mod p$
are called respectively the first, second, and special case of $SFLT$.

\smallskip From some works of P. Furtw\"angler and H.S. Vandiver,  
G. Gras and R. Qu\^eme  [GQ] have put
the basis of a new cyclotomic approach to  Fermat's Last Theorem  by 
introducing some auxiliary  fields of the form $\Q(\mu_{q-1})$ 
with prime  $q\not= p$   to  study  $SFLT$ equation  and  derive some
consequences for $FLT$. 

\smallskip 

In this   article,  we  examine    some particularities of   the  second
case of $SFLT$
(hereinafter $SFLT2$). 
Without loss of generality in the context of this work,  we   choose the
following  formulation of   $SFLT2$ in  the sequel:

{\it Let $p>3$ be a prime, $K:=\Q(\zeta)$
the $p$th cyclotomic field and $\Z_K$ its ring of integers.
The diophantine  equation 
$(u+v\zeta)\Z_K=\mk w_1^p$,
with  $u,v\in\Z\backslash\{0\}$ coprime,  $v\equiv 0 \mod p$ 
and  $\mk w_1$  
ideal of  $\Z_K$, has no solution.}

Thus it is always assumed in the sequel, without further mention, that $p|v$.
Observe that $SFLT2$ implies the second  case  $FLT2$ of $FLT$.

\subsection{General definitions and notations } \label{definitions}
At first, we fix some general notations, conventions  and  definitions,  the 
context of Fermat's Last Theorem and then we explain the aims of the
article.

\begin{notas}\label{n1}
$ $
{\rm

- Let  $p>3$ be a prime, $\zeta:=e^{\frac{2\pi i}{p}}$, $K:=\Q(\zeta)$
the  $p$th cyclotomic number field, $\Z_K$ the ring of integers  of $K$, 
 and  $\mk p=(1-\zeta)\Z_K$  the prime ideal of $\Z_K$ over $p$.

 - Let $g:={\rm Gal}(K/\Q)$, for $k \not\equiv 0 \mod p$ and  $s_k :
\zeta \rightarrow\zeta^k$   
 the $p-1$ distinct elements of $g$. 

- Let $C\ell_K$, $C\ell$, $C\ell^-$  and $C\ell_{[p]}$ be respectively the class group of $K$, 
the $p$-class group of $K$, the negative part of the $p$-class group of $K$ and  the $p$-elementary
class group of $K$.
For any ideal  $\mk a$  of $K$, 
let us note $c\ell_K(\mk a)$ and $c\ell(\mk a)$ the class of $\mk a$ in $C\ell_K$ and  $C\ell$.
}
\end{notas}

\begin{defi}$ $
A number $\alpha\in K^\times$ prime to $p$, such that $\alpha\Z_K$ is
the $p$th power of an ideal,  is called a pseudo-unit.
The pseudo-unit $\alpha$ is   $p$-primary  (i.e. 
the extension $K(\sqrt[p]{\alpha})/K$ is unramified at $\mk p$) 
if and only if $\alpha$ is congruent 
to a $p$-power $\mod \mk p^p$, see [Gr2] lem 2.1.
\end{defi}

If $SFLT2$ fails for  $(p,u,v)$ with $p|v$ then $\gamma := u+\zeta v
\in\Z_K$
is a $p$-primary pseudo-unit of $\Z_K$ since $\gamma \equiv u \mod p$
(a generalization of a result of Kummer given  again in [Gr2], Theo.\,2.2).

 We will derive from our results on $SFLT2$ some consequences for $FLT2$.
We adopt  in the sequel the following notations for  an hypothetic
counterexample to $FLT2$
$$x^p+y^p+z^p=0,$$
 $x,y,z\in\Z\backslash\{0\}$ coprime and $p|y$.
It would be a counterexample to $SFLT2$ with $(u,v)=(x,y)$ (resp.
$(u,v)=(z,y)$), 
verifying  moreover 
$x+y=z_0^p$  for some $z_0 \vert z$ (resp. $z+y=x_0^p$ for some $x_0\vert
x$).

\medskip

\begin{notas}
{\rm 
 
We adopt the following notations:

  - For any integer  $m>0$,  let $\Phi_m(X)$ be 
the $m$th cyclotomic polynomial and   $\phi(m)$ the Euler indicator. 
For   $a,b\in\Z\backslash \{0\}$, let us define  
$\Phi_{m}(a,b) := b^{\phi(m)}\Phi_{m}\Big(\frac{a}{b}\Big).$
Clearly $\Phi_m(a,b)\in\Z[a,b]$.

\smallskip

- Recall that we assume  that $SFLT2$ fails for  $(p, u, v)$, so  with $u,v$   relatively
prime and  $v \equiv 0 \mod p$.
Recall  [GQ, Lem.\,2]: {\it let $n\geq 1$ and $q$ be a prime. 
The following properties are equivalent:

\smallskip (i) $q\not|\  n$ and $q \ |\ \Phi_n(u,v)$.

\smallskip (ii) $q\not|\  uv$ and $\frac{v}{u}$ is of order $n\mod q$.}

\smallskip

-  Let  $q$ be a prime number dividing $\Phi_n(u,v)$ with $q\ \not|\ n$   and $n=dp^r$  
where $d$ is prime to $p$ and $r\geq 0$.
$$\Phi_{d\,p^r}(u,v) := \prod\  (u\,\psi^i\,\zeta_r^j - v)
\mbox{\ \ for all\ \ } i\ \in (\Z/d\Z)^\times \mbox{\ \ and\ \ } j\ \in
(\Z/p^r\Z)^\times,$$
where $\psi:=e^{\frac{2\pi i}{d}}$   and $\zeta_r:=e^{\frac{2\pi
i}{p^r}}$\  
(observe that the two previous definitions    
$\zeta:=e^{\frac{2\pi i}{p}}$ and $\zeta_r:=e^{\frac{2\pi i}{p^r}}$
imply that $\zeta_1=\zeta$).

\smallskip
 Let us  fix the  root of unity   $\xi :=\psi \,\zeta_r$.  
 \smallskip
Let  $L:= \Q(\xi)$ and $M=LK = \Q(\xi,\zeta)$.
Put ${\mathfrak q}  = (q, u\,\xi - v)$ where $\mk q$ is a prime ideal of
$L$ over $q$  because we have assumed $q\not|\  n$.
\footnote{Observe that the  prime ideal $\mk q$   
is fixed unambiguously by this  choice of $\xi$. } 

\smallskip
(i)  If $r=0$ then $L= \Q(\psi)$ and $M$ is of degree $p-1$ over
 $L$. We denote by ${\mathfrak Q} $ any prime ideal of $M$ above 
 ${\mathfrak q}$ and by $\mk q_K$ the prime ideal of $K$
 under  $\mk Q$;

\smallskip
(ii)  If $r \geq 1$ then $M=L$ and 
${\mathfrak Q} ={\mathfrak q}$.}
 
\end{notas}

\smallskip
  Let us recall the definition of the $p$th power 
residue symbols in $K$ and $M$  with values in  $\mu_p$ (see [GQ]
definition 2).

\begin{defi}\label{d1103061}
If $\alpha \in M$ is prime to
${\mathfrak Q}\,\vert\, {\mathfrak q}$ in $M$, then let $\ov\alpha$ 
be the image of $\alpha$ in
the residue field $\Z_M/{\mathfrak Q} \simeq \mf F_{q^f}$; 
since
$\zeta \in \Z_M$, the image $\ov \zeta$ of $\zeta$ is of order $p$
(since $\zeta \not\equiv 1 \mod {\mathfrak Q}$) and we can put
$\ov\alpha_{}^{\,\kappa} = \ov \zeta^{\,\mu}$, $\kappa=\frac{q^f-1}{p}$,
$\mu \in \Z/p\Z$,
which defines the $p$th power residue symbol
$\Big(\frac{\alpha}{{\mathfrak Q}}\Big)_{\!\!M} :=
\zeta^\mu$; this symbol is equal to 1 if and only if $\alpha$ is a
local $p$th power at ${\mathfrak Q}$ (see
[Gr1,\,I.3.2.1, Ex.\,1]).

\smallskip\noindent
With this definition, for any automorphism $\tau\in {\rm Gal}(M/\Q)$ one
obtains, from $\alpha_{}^{\,\kappa} \equiv \zeta^{\,\mu}
\mod {\mathfrak Q}$,
$\tau \alpha_{}^{\,\kappa}  \equiv \tau \zeta^\mu
\mod \tau {\mathfrak Q}$, thus
$\tau \Big(\frac{\alpha}{{\mathfrak Q}}\Big)_{\!\!M} =
\Big(\frac{\tau \alpha}{\tau {\mathfrak Q}}\Big)_{\!\!M} =
\tau \zeta^\mu$.

\smallskip\noindent
If $\alpha \in K$, since ${\mathfrak q}_K\,\vert\,q$ in $K$ splits
totally in $M/K$, we have $\Z_K/{\mathfrak q}_K \simeq \Z_M/{\mathfrak Q}$
and
$\Big(\frac{\alpha}{{\mathfrak q}_K}\Big)_{\!\!K} =
\Big(\frac{\alpha}{{\mathfrak Q}}\Big)_{\!\!M}$
for any ${\mathfrak Q}\,\vert\,{\mathfrak q}_K$.
In particular this implies $\Big(\frac{\zeta}{{\mathfrak 
q}_K}\Big)_{\!\!K} = \zeta^{\kappa}$
(the symbol of $\zeta$ does not depend on the choice of ${\mathfrak 
q}_K \,\vert\, q$).
\end{defi}

\subsection {The aim of the article}

In the $FLT2$ context defined before,  Furt\-w\"angler proved 
that,   if $q$ is a prime such that  $q\ |\ x(x+y)$,  
then $q^{p-1}\equiv 1\mod  p^2$
(first Furtw\"angler theorem for Fermat, see Furtw\"angler [Fur] and 
Ribenboim [Rib1, 3B]), and 
if $q$ is a prime such that $q\ |\ x-y$ then $q^{p-1}\equiv 1\mod  p^2$
(second  Furtw\"angler theorem for Fermat, see Furtw\"angler [Fur] and 
Ribenboim [Rib1, 3C]. 

For a generalization of the  Furtw\"angler results  
in the SFLT2 context with $q|u(u^2-v^2)$, see  Gras [Gr2], Theo.\,3.20
and also Gras and Qu\^eme [GQ], Cor.\,2 and 3.

\begin{defi}
A prime $q$ is said {\it  $p$-principal}
if the class $c\ell_K (\mk q_K)\in C\ell_K$  of any prime ideal 
$\mk q_K$ of $\Z_K$ above  $q$ is the $p$th power of a class, which is 
equivalent to $\mk q_K = {\mk a}^p (\alpha)$, for an ideal ${\mk a}$
of $K$ and an $\alpha \in K^\times$.
This contains the case where  the class $c\ell_K (\mk q_K)$ is of order
coprime with  $p$.
\end{defi}

We  suppose that  SFLT2 fails for    $(p,u,v)$.
Let  $q$ be a $p$-principal prime with $\frac{v}{u}$ of order
$n\mod q$ with $q\not|\ n$,  so such that  $q\ |\ \Phi_n(u,v)$. 
Renewing some  ideas of Vandiver for $FLT$ in [Va1, Va2]  involving 
the systematic use of   the $p$th  power residue symbols 
$\Big(\frac{a}{\mk Q}\Big)_{\!\! M}$ for  $a\in M$ 
coprime with $\mk Q$ (see definition \ref{d1103061}),  
the aims of this article are: 
\bn
\item
Exhibit some strong  properties of the decomposition of
 the prime  $q$ in certain Kummer $p$-extensions of the field $M$
involving the so-called {\it Vandiver cyclotomic units} and  more
particularly 
in detailing the case of   the primes $q$ dividing $u^{2p}-v^{2p}$.
\item  Set a    conjecture in contradiction with this  decomposition
for  some {\it arbitrarily large  }  primes  $q$  implying   
 the SFLT2 conjecture. 
\item  Set a   weak conjecture in contradiction with this  decomposition
for  some {\it small}  primes  $q$  implying   
that the SFLT2 equation could be   reduced  to the form   $$u+\zeta v\in
K^{\times p}.$$ 
This reduced  form   seems  possibly   easier to search 
for a proof of   SFLT2 conjecture with a diophantine approach.
\item Set a weak  conjecture in contradiction with such decompositions 
of  these primes  $q$  implying   the generalization  to the second case
FLT2   of  the  Terjanian  FLT1 theorem [Ter]  
(i.e. $x^{2p}+y^{2p}+z^{2p}=0$ has no solution if $xyz\not\equiv 0\mod p$).
\en

\section{The main theorem}
We give at first a definition and  an elementary lemma independent of the
SFLT conjecture.

\begin{defi}\label{d11O9221}
Let $n=dp^r$, with $d,p$ coprime and $r\geq 0$.
Let $\xi$ be a fixed primitive $n$th  root of unity
$\xi=\psi\zeta_r$ where  $\psi:=e^{\frac{2\pi i}{d}}$ and
$\zeta_r:=e^{\frac{2\pi i}{p^r}}$.

For all $\ 0\leq k <p-1$, let us define\,\footnote{The reason why  
$k = p-1$ is   discarded 
will be explained  in remark \ref{r1p1} after the lemma \ref{l2}.}
$$\varepsilon_k:= 1+\xi\zeta^k. \footnote{$\varepsilon_k$ is used with
this meaning
in the sequel of the article.}$$
\end{defi}

\begin{lem}\label{l1}$ $

a) If $k=0$, $\varepsilon_0 = 1 + \xi$  is a cyclotomic unit of $L$ 
except if $d=1$ ($\varepsilon_0 = 2$) or $d=2$  ($\varepsilon_0 = 0$).

\smallskip    
b) Suppose that $0<k<p-1$.

\smallskip

(i) If $d>2$ then  $\varepsilon_k=1+\xi\zeta^k$ is a cyclotomic unit.

\smallskip

(ii) If $d=2$ then $\varepsilon_k$ is not a cyclotomic unit and 

\smallskip
\hspace{0.6cm}--  If $r \geq 1$ then
$\varepsilon_k=1-\zeta_r^{1+kp^{r-1}}\in\Z[\zeta_r]$
with $\varepsilon_k\Z[\zeta_r]=\mk p_r$ 
where $\mk p_r$ is the prime ideal of $\Z[\zeta_r]$ above $p$.

\smallskip
\hspace{0.6cm}--  If $r=0$ then $\varepsilon_k= 1-\zeta^{k}$ with
$\varepsilon_k\Z_K=\mk p$.

\smallskip

(iii) If $d=1$ then $\varepsilon_k$ is a cyclotomic unit and 

\smallskip
\hspace{0.6cm}-- If $r \geq 1$ then $\varepsilon_k=
1+\zeta_r^{1+kp^{r-1}}$.

\smallskip
\hspace{0.6cm}--  If $r=0$ then $\varepsilon_k= 1+\zeta^{k}$.
\begin{proof}
Left to the reader.
\end{proof}
\end{lem}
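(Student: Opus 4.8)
Here is the line of argument I would follow; everything reduces to elementary manipulations with roots of unity and the classical structure of cyclotomic units.

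\smallskip
The plan is to reduce each $\varepsilon_{k}=1+\xi\zeta^{k}$ to a quotient of terms of the form $1-\omega$, $\omega$ a root of unity, through the identity $1+\omega=\frac{1-\omega^{2}}{1-\omega}$ (valid for $\omega\neq 1$), and then to apply two classical facts: (i) if $\omega$ is a primitive $N$th root of unity, then for every $a$ prime to $N$ the ideals $(1-\omega^{a})$ and $(1-\omega)$ of $\Z[\omega]$ coincide, so $\frac{1-\omega^{a}}{1-\omega}$ is a cyclotomic unit, and $1-\omega$ is itself a unit if and only if $N$ is not a prime power; (ii) if $N=\ell^{s}$ is a prime power, then $(1-\omega)$ is the prime ideal of $\Z[\omega]$ above $\ell$, with $N_{\Q(\omega)/\Q}(1-\omega)=\Phi_{\ell^{s}}(1)=\ell$.

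\smallskip
I would first pin down the order of $\xi\zeta^{k}$. For $r\ge 1$, writing $\zeta=\zeta_{r}^{p^{r-1}}$ gives $\xi\zeta^{k}=\psi\,\zeta_{r}^{1+kp^{r-1}}$; since $0\le k<p-1$ the exponent $1+kp^{r-1}$ is prime to $p$ (when $r=1$ because $1\le 1+k\le p-1$, when $r\ge 2$ because $1+kp^{r-1}\equiv 1\pmod{p}$), so $\zeta_{r}^{1+kp^{r-1}}$ has order $p^{r}$ and, $\gcd(d,p)=1$ being assumed, $\xi\zeta^{k}$ has order $n=dp^{r}$. For $r=0$, $\xi=\psi$ and $\xi\zeta^{k}=\psi\zeta^{k}$ has order $d$ if $k=0$, order $dp$ if $0<k<p-1$. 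This yields at once the explicit forms in b): for $d=2$, $\psi=-1$, hence $\varepsilon_{k}=1-\zeta_{r}^{1+kp^{r-1}}$ (resp.\ $1-\zeta^{k}$ if $r=0$); for $d=1$, $\psi=1$, hence $\varepsilon_{k}=1+\zeta_{r}^{1+kp^{r-1}}$ (resp.\ $1+\zeta^{k}$ if $r=0$).

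\smallskip
Then I would run through the three regimes. For $d=2$: the exponent $1+kp^{r-1}$ is prime to $p^{r}$, so by (i)--(ii) $\varepsilon_{k}\Z[\zeta_{r}]=(1-\zeta_{r})=\mk p_{r}$ (resp.\ $\varepsilon_{k}\Z_{K}=\mk p$ if $r=0$); hence $\varepsilon_{k}$ is not a unit, and a fortiori not a cyclotomic unit. For $d=1$: $\varepsilon_{k}=1+\zeta_{r}^{1+kp^{r-1}}=\frac{1-\zeta_{r}^{2(1+kp^{r-1})}}{1-\zeta_{r}^{1+kp^{r-1}}}$, and since $p$ is odd the exponent $2(1+kp^{r-1})$ is still prime to $p^{r}$, so by (i) this quotient is a cyclotomic unit of $\Q(\zeta_{r})$ (of $K$ when $r=0$). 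For $d>2$ and $0<k<p-1$: the order $N$ of $\xi\zeta^{k}$ is $dp^{r}$ ($r\ge 1$) or $dp$ ($r=0$); in either case $p\mid N$ and $N$ also has a prime divisor coming from $d$ and distinct from $p$, so $N$ is not a prime power, and neither is the order of $(\xi\zeta^{k})^{2}$; hence by (i) both $1-\xi\zeta^{k}$ and $1-(\xi\zeta^{k})^{2}$ are units, and $\varepsilon_{k}=\frac{1-(\xi\zeta^{k})^{2}}{1-\xi\zeta^{k}}$ is a cyclotomic unit. Statement a) is the case $k=0$ of the same computation, applied to $1+\xi=\frac{1-\xi^{2}}{1-\xi}$; the degenerate values are $n=1$ ($\xi=1$, $\varepsilon_{0}=2$) and $n=2$ ($\xi=-1$, $\varepsilon_{0}=0$).

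\smallskip
The point requiring care is b(i) for even $d>2$: one must check, by bookkeeping on the $2$-adic valuation of $dp^{r}$, that the order of $\xi\zeta^{k}$ and of its square never reduces to a power of $2$ (nor to twice an odd prime power). This is exactly where the hypothesis $0<k<p-1$ enters --- then $\zeta^{k}$ has order $p$, which forces $N$ to carry at least two distinct prime divisors --- and it is because this mechanism fails for $k=0$ that a) must list exceptions; the exceptions stated are adequate for the small values $d\le 2$ ($n\mid 2p$) occurring in the sequel.
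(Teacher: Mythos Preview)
Your argument is correct and complete. The paper itself gives no proof at all --- it writes only ``Left to the reader'' --- so you have supplied exactly what was omitted: the reduction via $1+\omega=(1-\omega^2)/(1-\omega)$, the order computation for $\xi\zeta^k$, and the standard dichotomy on whether the order is a prime power. Your final paragraph is also perceptive: the statement of part~a) is in fact slightly loose (for instance $d=2^s$, $r=0$, $s\ge 2$ gives $\varepsilon_0$ non-unit as well, and $d=1$, $r\ge 1$ actually gives a unit), and you are right that these edge cases do not interfere with the paper's applications.
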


\smallskip

The following lemma using Hilbert class field theory for  $K$ plays a
central role in the article.

\smallskip

\begin{lem}\label{l2}
Suppose that $SFLT2$ fails for  $(p,u,v)$ with $p|v$. 
Let  $q\not|\ puv$ be a $p$-principal prime,  
$n$ the order of $\frac{v}{u}\mod q$, $\xi:=e^{\frac{2\pi i}{n}}$  and $\mk
q$ be  the prime ideal 
$\mk q:= (u\xi-v, q)$ of  $\Z_L$.
Then 
$$\Big(\frac{\varepsilon_k}{\mk Q}\Big)_{\!\! M}= \Big(\frac{u}{\mk
q_K}\Big)_{\!\! K}^{\!\!-1}
\mbox{\ for all\ }k=1,\ldots,p-2 \mbox{\  and all \ }\mk Q|\mk q,
\footnote{Observe that $\Big(\frac{u}{\mk q_K}\Big)_{\!\!  K}^{-1}$ does
not depend on $k$ 
and recall that  $\mk Q=\mk q$ if $r>0$.}$$  
where $\varepsilon_k=1+\xi\zeta^k$. 
\begin{proof}$ $

-We have $u\xi- v\equiv 0 \mod \mk q$, so $u\xi - v\equiv 0\mod  \mk Q$
for all $\mk Q|\mk q$,  hence 
with   $\gamma := u+\zeta  v$, we get
$s_k(\gamma)=u+\zeta^k v\equiv u(1+\xi\zeta^k)\equiv u \varepsilon_k\mod  
\mk Q$,
 for all $k\not\equiv 0\mod  p$.

- We obtain 
$\Big(\frac{s_k(\gamma)}{\mk Q}\Big)_{\!\! M}= 
\Big(\frac{u}{\mk Q}\Big)_{\!\! M}
\Big(\frac{\varepsilon_k}{\mk Q}\Big)_{\!\! M},$
so 
$\Big(\frac{s_k(\gamma)}{\mk q_K}\Big)_{\!\! K}= 
\Big(\frac{u}{\mk q_K}\Big)_{\!\! K}
\Big(\frac{\varepsilon_k}{\mk Q}\Big)_{\!\! M}.$

-  The numbers $s_k(\gamma)$ are $p$-primary pseudo-units, which implies 
 $\Big(\frac{s_k(\gamma)}{\mk q_K}\Big)_{\!\! K}=1$  for all $k\not\equiv
0\mod  p$ 
from the Hilbert
 class field decomposition theorem because, by assumption,  $q$ is
$p$-principal.
\end{proof}
\end{lem}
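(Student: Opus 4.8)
\medskip\noindent\textbf{Proof strategy.}
The plan is to transfer the residue symbol of $\varepsilon_k$ at $\mk Q$ onto the residue symbols of the Galois conjugates $s_k(\gamma)$ of the pseudo-unit $\gamma:=u+\zeta v$ at $\mk q_K$, and then to kill these last symbols by Hilbert class field theory for $K$, using that $q$ is $p$-principal. First I would reduce everything to a congruence modulo $\mk Q$: the defining relation $u\,\xi\equiv v\pmod{\mk q}$ holds modulo every $\mk Q\mid\mk q$ in $\Z_M$, so applying $s_k\in{\rm Gal}(K/\Q)$ to $\gamma$ (which fixes the rational integer $u$) gives, for $1\le k\le p-2$,
$$s_k(\gamma)=u+v\,\zeta^k\equiv u+u\,\xi\,\zeta^k=u\,(1+\xi\zeta^k)=u\,\varepsilon_k\pmod{\mk Q}.$$
Before continuing one must check that $\varepsilon_k$, hence also $s_k(\gamma)$, is prime to $\mk Q$: by Lemma~\ref{l1}, for $1\le k\le p-2$ the number $\varepsilon_k$ is either a cyclotomic unit or generates a prime of $M$ lying above $p$, hence is prime to $\mk Q\mid q$. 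It is exactly the discarded index $k=p-1$ that can fail here (for instance when $n=2p$, where $\varepsilon_{p-1}=0$; compare Remark~\ref{r1p1}), which is why it is left out. Thus all the symbols below are defined.

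Since the $p$th power residue symbol of an element depends only on its residue class modulo the prime, the congruence above together with multiplicativity of the symbol gives $\big(\tfrac{s_k(\gamma)}{\mk Q}\big)_{\!M}=\big(\tfrac{u}{\mk Q}\big)_{\!M}\,\big(\tfrac{\varepsilon_k}{\mk Q}\big)_{\!M}$; and since $s_k(\gamma),u\in K$ while $\mk q_K$ splits totally in $M/K$, Definition~\ref{d1103061} rewrites this as
$$\Big(\frac{s_k(\gamma)}{\mk q_K}\Big)_{\!\!K}=\Big(\frac{u}{\mk q_K}\Big)_{\!\!K}\,\Big(\frac{\varepsilon_k}{\mk Q}\Big)_{\!\!M}.$$
The substantive step is to show the left-hand side equals $1$. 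Here I would observe that $p$-primarity and coprimality to $\mk p$ are stable under ${\rm Gal}(K/\Q)$, because $\mk p$ is the unique prime of $K$ above $p$; and $\gamma$ has both properties (it lies in $\Z_K$, is prime to $p$, $\gamma\Z_K=\mk w_1^p$, and $\gamma\equiv u\pmod p$). Hence each $s_k(\gamma)$ is again a $p$-primary pseudo-unit, with $s_k(\gamma)\Z_K=s_k(\mk w_1)^p$ and $s_k(\gamma)\equiv u\pmod p$, $p\not|\ u$. Therefore the Kummer extension $K(\sqrt[p]{s_k(\gamma)})/K$ is unramified at $\mk p$ (by $p$-primarity), at every finite prime $\mk l\neq\mk p$ (since $v_{\mk l}(s_k(\gamma))\equiv 0\pmod p$, $s_k(\gamma)\Z_K$ being a $p$th power of an ideal), and at infinity ($K$ being totally imaginary); so it is contained in the maximal unramified abelian extension of $K$ of exponent dividing $p$, whose Galois group is a quotient of $C\ell_K/C\ell_K^{\,p}$ under the Artin map.

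Now the hypothesis enters: since $q$ is $p$-principal, $c\ell_K(\mk q_K)\in C\ell_K^{\,p}$, so its Frobenius in ${\rm Gal}(K(\sqrt[p]{s_k(\gamma)})/K)$ is trivial, that is $\mk q_K$ splits completely in that extension; equivalently $s_k(\gamma)$ is a $p$th power modulo $\mk q_K$, i.e.\ $\big(\tfrac{s_k(\gamma)}{\mk q_K}\big)_{\!K}=1$. Feeding this into the displayed identity yields $\big(\tfrac{\varepsilon_k}{\mk Q}\big)_{\!M}=\big(\tfrac{u}{\mk q_K}\big)_{\!K}^{-1}$ for all $k=1,\dots,p-2$ and all $\mk Q\mid\mk q$, the right-hand side being visibly independent of $k$. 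I expect the only genuine obstacle to be this class-field-theoretic input --- namely that a $p$-primary pseudo-unit generates an everywhere-unramified Kummer $p$-extension of $K$, which is then necessarily split at any $p$-principal prime; everything else is bookkeeping with Galois conjugates and with the multiplicative, residue-class-only behaviour of the power residue symbol.
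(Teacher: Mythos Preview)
Your proof is correct and follows essentially the same approach as the paper's: establish the congruence $s_k(\gamma)\equiv u\,\varepsilon_k\pmod{\mk Q}$, convert it into an identity of power residue symbols, and then kill $\big(\tfrac{s_k(\gamma)}{\mk q_K}\big)_{\!K}$ via the Hilbert class field of $K$ using that $q$ is $p$-principal. You have in fact supplied more detail than the paper does---in particular the verification that $\varepsilon_k$ is prime to $\mk Q$ for $1\le k\le p-2$, and the explicit unramification argument for $K(\sqrt[p]{s_k(\gamma)})/K$---but the logical skeleton is identical.
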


\begin{rema}\label{r1p1}
{\rm We explain why we can discard the value $k=p-1$ of the index $k$.
We exclude the value $k=p-1$, because $\varepsilon_k$ would
be null if and only if  $d=2,r=1$ and $k=p-1$.
For all the other $(d,r,k),\ 1\leq k\leq p-1$ with  $\varepsilon_k\not=0$,
we will show that it is always possible to
express $\Big(\frac{1+\xi\zeta^{p-1}}{\mk Q}\Big)_{\!\! M}$ in function of
$\Big(\frac{1+\xi\zeta^{k}}{\mk Q}\Big)_{\!\! M},\ k=1,\dots,p-2$ as 
follows:
we start from $$u(1+\xi\zeta^j)\equiv s_j(\gamma)\mod\mk q,\mbox{\ for\ }
j=1,\dots,p-1,$$
where $1+\xi\zeta^j$ is always
nonzero, therefore 
$$u^{p-1}\prod_{j=1}^{p-1} (1+\xi\zeta^j)\equiv N_{K/\Q}(\gamma)=w_1^p\mod
\mk q,$$ 
so 
$$\Big(\frac{u^{p-1}(1+\xi\zeta)\ldots(1+\xi
\zeta^{p-2})(1+\xi\zeta^{p-1})}{\mk Q}\Big)_{\!\! M}=1
\ \mbox{\ for all\ } \mk Q|\mk q$$
so, from lemma \ref{l2} applied for all $1\leq k\leq p-2$ we get
$\Big(\frac{u(1+\xi\zeta^{p-1})}{\mk Q}\Big)_{\!\! M}=1,$
and thus $$\Big(\frac{1+\xi\zeta^{p-1}}{\mk Q}\Big)_{\!\!
M}=\Big(\frac{1+\xi\zeta^k}{\mk Q}\Big)_{\!\! M},
\ \mbox{\ for all\ } k=1,\dots,p-2.$$}
\end{rema}

\begin{rema} {\rm Suppose moreover  that $FLT2$ fails  for  $(p,x,y,z)$.
In that case, we have (with $x,y$ in place of $u,v$)  the relations
$(x+\zeta y)\Z_K=\mk z_1^p$ where $\mk z_1$ is an ideal of $\Z_K$,
$x+y=z_0^p$ 
and $\frac{x^p+y^p}{x+y}= z_1^p$ with $z = - z_0 z_1$.
We get $x\xi-y\equiv 0\mod  \mk q$, so $x(1+\xi)\equiv z_0^p\mod \mk q$,
thus
$\Big(\frac{x}{\mk q_K}\Big )_{\!\!K}\Big(\frac{1+\xi}{\mk Q}\Big)_{\!\!
M}= 1.$

From $x^p+y^p+z^p=0$, we get $x^p(1+\xi^p)\equiv -z^p\mod\mk q$, so
$\Big(\frac{1+\xi^p}{\mk Q}\Big)_{\!\! M}=1$
and thus  we improve slightly the  lemma \ref{l2} if $FLT2$ fails 
for  $(p,x,y,z)$}:
\end{rema}

\begin{cor}\label{c1}
 Suppose that $FLT2$ fails for $(p,x,y,z)$. Let $q\not|\ pxy$  be a
$p$-principal 
prime   with $\frac{x}{y}$ of order  $n\mod q$.   Then  
$$ \Big(\frac{1+\xi^p}{\mk Q}\Big)_{\!\! M}=1$$ and 
$$\Big(\frac{\varepsilon_k}{\mk Q}\Big)_{\!\! M}= \Big(\frac{x}{\mk 
q_K}\Big)_{\!\! K}^{-1} 
\mbox{\ for all\ }  k=0,1,\dots,p-2\mbox{\  and all\ }\mk Q|\mk q.$$ 
\end{cor}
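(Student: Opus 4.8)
The plan is to mimic the proof of Lemma \ref{l2}, but now exploiting the extra algebraic relations coming from an actual $FLT2$ counterexample $x^p+y^p+z^p=0$, which give us information at the index $k=0$ (i.e.\ for $\varepsilon_0=1+\xi$) that Lemma \ref{l2} could not reach. First I would recall, as in the remark preceding the statement, that since $FLT2$ fails for $(p,x,y,z)$ with $p\mid y$, we have $(x+\zeta y)\Z_K=\mk z_1^p$ and $x+y=z_0^p$ with $z=-z_0z_1$. Reducing modulo $\mk q$ (hence modulo every $\mk Q\mid\mk q$), from $x\xi-y\equiv 0\mod\mk q$ we get $x(1+\xi)=x+x\xi\equiv x+y=z_0^p\mod\mk Q$, so
$$\Big(\frac{x}{\mk q_K}\Big)_{\!\!K}\Big(\frac{1+\xi}{\mk Q}\Big)_{\!\!M}=\Big(\frac{z_0^p}{\mk Q}\Big)_{\!\!M}=1,$$
which is exactly the $k=0$ instance of the asserted formula, i.e.\ $\big(\frac{\varepsilon_0}{\mk Q}\big)_{\!\!M}=\big(\frac{x}{\mk q_K}\big)_{\!\!K}^{-1}$. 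For $k=1,\dots,p-2$ the formula is precisely Lemma \ref{l2} applied with $(u,v)=(x,y)$ (a legitimate specialization, since a $FLT2$ counterexample yields a $SFLT2$ counterexample for $(p,x,y)$, and $q$ is assumed $p$-principal with $q\nmid pxy$). Concatenating these two cases gives the second displayed identity for all $k=0,1,\dots,p-2$.

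For the first displayed identity, I would start from the Fermat relation itself: $x^p+y^p+z^p=0$ gives $x^p+y^p=-z^p$, and modulo $\mk q$ we have $\xi^p\equiv (y/x)^p$, hence $x^p(1+\xi^p)\equiv x^p+y^p\equiv -z^p\mod\mk Q$. Therefore
$$\Big(\frac{1+\xi^p}{\mk Q}\Big)_{\!\!M}=\Big(\frac{-z^p\,x^{-p}}{\mk Q}\Big)_{\!\!M}=\Big(\frac{-1}{\mk Q}\Big)_{\!\!M}\Big(\frac{z}{\mk Q}\Big)_{\!\!M}^{\!p}\Big(\frac{x}{\mk Q}\Big)_{\!\!M}^{\!-p}=1,$$
since $-1$ is a $p$th power (as $p$ is odd: $-1=(-1)^p$) and $p$th powers of elements coprime to $\mk Q$ have trivial $p$th power residue symbol. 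One subtlety to check here is that $x$ and $z$ are indeed coprime to $\mk q$: $q\nmid x$ by hypothesis $q\nmid pxy$, and $q\nmid z$ follows because $q\mid\Phi_n(x,y)$ with $q\nmid n$ forces $q\nmid xy$, while if $q\mid z$ then $q\mid x^p+y^p$ and $q\mid x+y$ would be needed simultaneously in a way compatible with $n>1$ — this I would dispatch by the standard observation that the order $n$ of $y/x\bmod q$ being $>1$ already prevents $q\mid z_0$ interfering; in any case the symbols $\big(\frac{z}{\mk Q}\big)_{\!\!M}$ and $\big(\frac{x}{\mk Q}\big)_{\!\!M}$ are well defined and get killed by the $p$th power.

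The only genuinely delicate point, and the one I would treat most carefully, is the bookkeeping of the residue symbols: passing between $\big(\frac{\cdot}{\mk Q}\big)_{\!\!M}$ and $\big(\frac{\cdot}{\mk q_K}\big)_{\!\!K}$ for elements of $K$ (justified by Definition \ref{d1103061}, since $\mk q_K$ splits totally in $M/K$), and ensuring that every quantity appearing — $x$, $y$, $z$, $z_0$, $\xi$, $1+\xi$, $1+\xi^p$ — is prime to $\mk Q$ so that all symbols are defined. Granting that, everything reduces to the multiplicativity of the symbol and the two elementary facts that $p$th powers and $-1$ map to $1$; no class field theory beyond what Lemma \ref{l2} already invokes is needed, because the $p$-principality of $q$ enters only through that lemma.
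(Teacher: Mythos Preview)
Your argument is correct and follows exactly the paper's own route, which is laid out in the remark immediately preceding the corollary: the $k=0$ case from $x(1+\xi)\equiv z_0^p$, the cases $k=1,\dots,p-2$ from Lemma~\ref{l2}, and the identity for $1+\xi^p$ from $x^p(1+\xi^p)\equiv -z^p$. Your hesitation about $q\mid z$ is legitimate but your attempted dispatch is muddled: in fact $q\mid z$ occurs precisely when $n\in\{2,2p\}$, and then $1+\xi^p=0$ as an algebraic number so the symbol is undefined---the paper silently assumes $q\nmid z$ here just as you ultimately do.
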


\subsection {The general case}

We give  the main theorem characterizing  the decomposition of $\mk Q$
in a Kummer $p$-extension of $M$ defined from the Vandiver's cyclotomic
units.

\begin{thm}\label{t1}
Suppose that $SFLT2$ fails for  $(p,u,v)$, $p\geq 3$ with $p|v$. Let $q
\not|\ puv$  be a $p$-principal
prime, $n$ the order of  $\frac{v}{u}\mod q$ and  $\xi:=e^{\frac{2\pi
i}{n}}$.
For all $0\leq k <p-1$,  let 
$\varepsilon_k:= 1+\xi\zeta^k$ and  the  prime ideal    ${\mathfrak q}  :=
(q, u\,\xi - v)$ of $\Z[\xi]$ above $q$.

Then all the prime ideals   $\mk Q$ of $\Z[\xi,\zeta]$ dividing $\mk q$
totally split 
in   the Kummer extension 
$$M \big(\sqrt[p]{<\varepsilon_k\varepsilon_1^{-1}>_{k=1,\ldots,p-2}}\,
\big) \big/M.$$

\begin{proof} It is a reformulation of the previous lemma \ref{l2}
which had shown  that $\Big(\frac{\varepsilon_k}{\mk Q}\Big)_{\!\! M}=
\Big(\frac{\varepsilon_1}{\mk Q}\Big)_{\!\! M}$, for $k=1,\dots,p-2$.
\end{proof}
\end{thm}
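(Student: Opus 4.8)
The plan is to reduce the statement about total splitting in the Kummer extension $M\big(\sqrt[p]{\langle\varepsilon_k\varepsilon_1^{-1}\rangle_{k=1,\dots,p-2}}\big)/M$ to a collection of statements about the vanishing of $p$th power residue symbols at the primes $\mk Q\mid\mk q$, and then to invoke Lemma \ref{l2}. Recall the standard fact (used in Definition \ref{d1103061}) that for $\alpha\in M^{\times}$ prime to $\mk Q$, the prime $\mk Q$ splits totally in $M(\sqrt[p]{\alpha})/M$ if and only if $\alpha$ is a local $p$th power at $\mk Q$, i.e. if and only if $\big(\frac{\alpha}{\mk Q}\big)_{M}=1$; and for a Kummer extension generated by several radicals $\sqrt[p]{\alpha_1},\dots,\sqrt[p]{\alpha_m}$, the prime $\mk Q$ splits totally if and only if $\big(\frac{\alpha_j}{\mk Q}\big)_{M}=1$ for every $j$, since the decomposition group of $\mk Q$ in the compositum is trivial exactly when it is trivial in each cyclic piece (the symbols are the components of the Frobenius under the Kummer pairing $\mathrm{Gal}\to\mu_p^{m}$). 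So it suffices to check $\big(\frac{\varepsilon_k\varepsilon_1^{-1}}{\mk Q}\big)_{M}=1$ for each $k=1,\dots,p-2$ and each $\mk Q\mid\mk q$.

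Next I would observe that the $p$th power residue symbol is multiplicative in its top argument, so $\big(\frac{\varepsilon_k\varepsilon_1^{-1}}{\mk Q}\big)_{M}=\big(\frac{\varepsilon_k}{\mk Q}\big)_{M}\big(\frac{\varepsilon_1}{\mk Q}\big)_{M}^{-1}$. For this to make sense one needs each $\varepsilon_k$ (and hence $\varepsilon_k\varepsilon_1^{-1}$) to be prime to $\mk Q$; this is where Lemma \ref{l1} enters, as it shows that for $1\le k\le p-2$ each $\varepsilon_k$ is either a cyclotomic unit of $M$ or, in the exceptional case $d=2$, generates a power of the prime $\mk p_r$ (resp. $\mk p$) above $p$ — and since $q\ne p$, such $\varepsilon_k$ is a $\mk Q$-unit, so its symbol is defined. (One should also remark that $\varepsilon_k\ne 0$ in the relevant range, which is exactly why $k=p-1$ was excluded, per Remark \ref{r1p1}.) Then Lemma \ref{l2} gives $\big(\frac{\varepsilon_k}{\mk Q}\big)_{M}=\big(\frac{u}{\mk q_K}\big)_{K}^{-1}$ for all $k=1,\dots,p-2$, a value independent of $k$; in particular $\big(\frac{\varepsilon_k}{\mk Q}\big)_{M}=\big(\frac{\varepsilon_1}{\mk Q}\big)_{M}$, so the quotient symbol is $1$ for every $k$ and every $\mk Q\mid\mk q$. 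Feeding this back into the splitting criterion of the previous paragraph completes the argument.

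The only genuinely substantive input is Lemma \ref{l2}, which rests on Hilbert class field theory: the numbers $s_k(\gamma)=u+\zeta^k v$ are $p$-primary pseudo-units, and the hypothesis that $q$ is $p$-principal forces $\big(\frac{s_k(\gamma)}{\mk q_K}\big)_{K}=1$ via the decomposition law in the Hilbert class field — but that lemma is already proved in the excerpt, so here it may be cited verbatim. Thus the main obstacle in writing \emph{this} proof is not depth but bookkeeping: one must be careful that the residue symbols are all well-defined (the $\mk Q$-integrality of the $\varepsilon_k$ and $\varepsilon_k^{-1}$, i.e. that $\varepsilon_k$ is a $\mk Q$-unit), that the multiplicativity and the ``total splitting $\iff$ all symbols trivial'' equivalence are applied over $M$ and not accidentally over $K$, and that the indexing $k=1,\dots,p-2$ (not $k=0$, not $k=p-1$) is respected throughout, consistently with the generating set $\langle\varepsilon_k\varepsilon_1^{-1}\rangle_{k=1,\dots,p-2}$ appearing in the statement. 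Since the theorem is explicitly billed as ``a reformulation of the previous lemma,'' I would keep the proof short: state the splitting criterion, reduce to the symbols via multiplicativity, cite Lemma \ref{l2} for their constancy in $k$, and conclude.
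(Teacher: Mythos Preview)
Your proof is correct and follows exactly the same approach as the paper: both reduce the total splitting to the equality $\big(\frac{\varepsilon_k}{\mk Q}\big)_{M}=\big(\frac{\varepsilon_1}{\mk Q}\big)_{M}$ for $k=1,\dots,p-2$ via Lemma~\ref{l2}. Your write-up is simply more explicit about the standard bookkeeping (well-definedness of the symbols, multiplicativity, and the Kummer splitting criterion) that the paper's one-line proof leaves implicit.
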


\begin{rema} $ $

\smallskip

{\rm 
\bn
\item
Observe  from lemma \ref{l1} that if $d>2$ then  $\varepsilon_{p-1}$ is a
cyclotomic unit. 
If $p=3$ then the 
 SFLT2 equation has actually  infinitely many solutions with $p|v$ of form
$u+j v= (s+jt)^3$,  with $s,t\in\Z$ coprime and $s+t\not\equiv 0\mod 3$,
see[GQ] remark 2.3.
Let $q>3$ be a prime. For any  $s,t$ given, we get  $u=s^3+t^3-3st^2, v=
3s^2t-3st^2$ and the order  
$n=d\cdot3^r$ of $\frac{v}{u}\mod q$. If $d>2$ then $(1+\xi j)(1+\xi
j^2)\not=0$ and  the application of theorem \ref{t1} gives 
$\Big(\frac{(1+\xi j^2)(1+\xi j)^{-1}}{\mk Q}\Big)_{\!\! M}=1.$ We can
verify it  directly  because that case corresponds to   $u+ v j =
(s+jt)^3$.
\item 
 If $SFLT2$ fails  for  $(p,u,v)$ with $p>3$  {\it large} and $p|v$,  and
if the $p$-principal prime    $q\ll p$  ({\it very small} 
compare to $p$)  , the probability is very small  that   the ideal $\mk q$
of $L$ over $q$  split totally  in the Kummer extension 
$M \big(\sqrt[p]{<\varepsilon_k\varepsilon_1^{-1}>_{k=1,\ldots,p-2}}\,
\big) \big/M$: let $p^\delta$ be the degree of this 
Kummer extension;  the probability estimate  that $\mk q$ split totally in
this extension should be roughly  $\mathcal P<\mathcal
O(\frac{\phi(q)}{p^\delta})$.
\item
As a  consequence, if  $SFLT2$  fails for   $(p,u,v)$  with $p|v$
sufficiently large,  then 
these probability estimates   suggest  that the integer $|v|$ should be
an 
{\it ''apocalyptically'' large  integer} with a very large number of
$p$-principal primes $q|v$ with $\kappa\not\equiv 0\mod p$,
which could bring some tools for  another  diophantine tackling of SFLT2
problem.
\en}
\end{rema}

At this stage it is possible to formulate the following conjecture:
\begin{conj}\label{cj1109041} If,   
for all triples  $(p, u,v)\in\Z^3$ with $p>3$  prime,   with $u,v $
coprime and   $p|v$,  
there exists at least 
 one $p$-principal prime $q$ 
with   $puv\not\equiv 0\mod q$, such that  the prime ideal  $\mk q=(q,u\xi-v)$
 is not totally split  in the extension 
$M \big(\sqrt[p]{<\varepsilon_k\varepsilon_1^{-1}>_{k=1,\ldots,p-2}}\,
\big) \big/M,$ 
with  $n$    order of $\frac{v}{u}\mod q$ and  $\xi=e^{\frac{2\pi
i}{n}}$, then $SFLT2$ holds.
\footnote{Note that  we use this formulation, abuse of language for: 
 {\it all the  prime
ideals $\mk Q$ of $M$ over $\mk q$ are not totally split in the extension
$M \big(\sqrt[p]{<\varepsilon_k\varepsilon_1^{-1}>_{k=1,\ldots,p-2}}\,
\big) \big/M.$}
This remark concerns also several statements of conjectures or theorems
in the sequel.} 
\end{conj}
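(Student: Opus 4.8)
The conjecture is an implication whose conclusion is $SFLT2$ and whose hypothesis is a (conjectural) statement about the existence of ``witness'' primes; the implication itself is elementary, and the plan is to prove it by contraposition, with Theorem \ref{t1} as the only input, and then to point out where the real difficulty lies.

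First I would assume that $SFLT2$ \emph{fails} for some triple $(p,u,v)\in\Z^3$ with $p>3$ prime, $u,v$ coprime and $p\mid v$, and derive the negation of the hypothesis for this triple. Let $q$ be \emph{any} $p$-principal prime with $puv\not\equiv 0\bmod q$; put $n$ equal to the order of $\frac{v}{u}\bmod q$, $\xi:=e^{2\pi i/n}$, $M:=\Q(\xi,\zeta)$, $\varepsilon_k:=1+\xi\zeta^k$, and $\mk q:=(q,u\xi-v)$, which is a prime ideal of $\Z[\xi]$ over $q$ (well defined because $n\mid q-1$, hence $q\nmid n$, by [GQ, Lem.\,2]). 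Theorem \ref{t1} then applies verbatim: since $SFLT2$ fails for $(p,u,v)$ and $q$ is a $p$-principal prime not dividing $puv$, \emph{every} prime ideal $\mk Q$ of $\Z[\xi,\zeta]$ dividing $\mk q$ splits totally in the Kummer extension $M \big(\sqrt[p]{<\varepsilon_k\varepsilon_1^{-1}>_{k=1,\ldots,p-2}}\, \big) \big/M$. As $q$ was an arbitrary $p$-principal prime not dividing $puv$, there is \emph{no} such $q$ for which $\mk q$ fails to be totally split in that extension; and, by the convention of the footnote, ``$\mk q$ is not totally split'' means exactly that all $\mk Q\mid\mk q$ fail to split totally, which is precisely the negation of the ``for all $\mk Q\mid\mk q$'' conclusion of Theorem \ref{t1} (consistently, since that Kummer extension is Galois over $\Q$ and the $\mk Q\mid\mk q$ are conjugate, so their splitting type is common). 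Hence the hypothesis of Conjecture \ref{cj1109041} is violated for $(p,u,v)$. Taking the contrapositive: if that hypothesis holds for every admissible triple, $SFLT2$ cannot fail for any of them, i.e. $SFLT2$ holds. This proves the implication, and no Chebotarev or density input is required for it.

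The main obstacle is therefore not in this argument but in the \emph{hypothesis} itself. By Theorem \ref{t1}, the mere existence of one $p$-principal witness prime $q$ obstructing total splitting already forces $SFLT2$ for $(p,u,v)$, so exhibiting a witness is tantamount to proving $SFLT2$ --- which is precisely why the statement is posed as a conjecture rather than a theorem. The informal support is the probabilistic heuristic recalled in the remark preceding the conjecture: for $q$ much smaller than $p$ the chance that $\mk q$ splits totally in a Kummer extension of degree $p^\delta$ is of order $\ml O(\phi(q)/p^\delta)$, so over the infinitely many candidate primes $q$ one expects witnesses in great abundance. Turning this heuristic into a proof would require an effective Chebotarev-type density statement for the family of Kummer extensions attached to the Vandiver cyclotomic units $\varepsilon_k$, together with sufficient control of which primes $q$ are $p$-principal (a matter tied to the structure of $C\ell_K$ and to Vandiver's conjecture); this is what remains genuinely open.
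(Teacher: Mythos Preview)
Your argument is correct and is exactly the paper's (implicit) reasoning: the conjecture is placed immediately after Theorem~\ref{t1} precisely because the implication it asserts is the contrapositive of that theorem, and the paper offers no further proof beyond ``At this stage it is possible to formulate the following conjecture''. Your additional paragraph correctly locates the genuine open content in the hypothesis rather than the implication.
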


\begin{rema}$ $
{\rm 
\bn
\item
Note that the formulation of this conjecture set for all pairs $(u,v)$
coprime with $p|v$ does not assume  that $u+\zeta v$ is a pseudo-unit (or
not), and thus  is independent of
 the SFLT problem. 
In this conjecture the assumption $puv\not\equiv 0\mod q$ can  require
that $q$ be taken   arbitrarily large, so    with $q\gg p$ (very large
compare to $p$). 
 For a more general 
conjecture of similar nature implying SFLT (first, second and special
case), see [GQ] conjecture 3.
\item
In subsection \ref{snp}, we will   show the existence of   {\it effective
small}   non $p$-principal
primes $q$  compare to $p$ ($q\ll p$) with $ uv\not\equiv 0\mod p$  that
allows to reduce the $SFLT2$ equation to the weaker 
form of diophantine equation  (possibly  easier to tackle)
$$u+\zeta v\in K^{\times p}.$$ 
\en 
}
\end{rema}

\subsection {The case  $n\in\{p,1,2p, 2\}$}

Recall that we have assumed $p>3$. In  this  subsection,  we suppose that
$SFLT2$ 
fails for  $(p,u,v)$ and  
we apply the lemma \ref{l2} in  fixing  $n\in\{p,1,2p, 2\}$ to
derive some strong properties of all the $p$-principal primes $q$ dividing
$\Phi_n(u,v)$ for these values of $n$.
Observe that  we have  $M=K$ in all these  cases.

\subsubsection{The two cases $n= p$  and $n=1$.}
The reunion of these  two cases  allows us to investigate the properties
of 
all  the  $p$-principal primes $q$ dividing $u^p-v^p$.
\begin{cor}\label{c2} 
Case $n=p$: suppose that $SFLT2$ fails for  $(p,u,v)$. 
If    $q$ is  a  $p$-principal  prime dividing $\frac{u^p-v^p}{u-v}$  then 
\begin{eqnarray*}
&& q\equiv 1\mod  p^2, \\
&& (1+\zeta)^{(q-1)/p}\equiv u^{(q-1)/p}\equiv v^{(q-1)/p}\equiv
2^{(q-1)/p}\equiv 1\mod  q,\\
\end{eqnarray*}
and the prime ideal $\mk q_K:= (q,u\zeta-v)$
splits totally in the extension 
$$K \big(\sqrt[p]{<1+\zeta^j>_{j=0,1,\ldots,p-2}}\, \big)/K.$$

\begin{proof}
Here $\xi=\zeta$, $u\zeta-v\equiv 0\mod \mk q$, 
$\varepsilon_k=1+\zeta^{k+1}$, $M=L=K$ and $\mk q=\mk Q=\mk q_K$, so  
$\Big(\frac{1+\zeta^{k+1}}{\mk q_K}\Big)_{\!\! K}=\Big(\frac{u}{\mk
q_K}\Big)_{\!\! K}^{-1}
\ \ \mbox{\ for all\ } k=1,\dots,p-2$.
It follows that  
$\Big(\frac{1+\zeta^{2}}{\mk q_K}\Big)_{\!\! K}= 
\Big(\frac{1+\zeta^{-2}}{\mk q_K}\Big)_{\!\! K}$,
which implies that   $\Big(\frac{\zeta}{\mk q_K}\Big)_{\!\! K}=1$,
thus $q\equiv 1\mod  p^2$, observing that $q\equiv 1\mod  p$. 

\smallskip
 From  $\Big(\frac{\zeta}{\mk q_K}\Big)_{\!\! K}=1$, we get 
$\Big(\frac{1+\zeta}{\mk q_K}\Big)_{\!\! K}= 
\Big(\frac{1+\zeta^{p-1}}{\mk q_K}\Big)_{\!\! K}$,
so $\Big(\frac{1+\zeta^{j}}{\mk q_K}\Big)_{\!\! K}=\Big(\frac{u}{\mk
q_K}\Big)_{\!\! K}^{-1}$,
 for all $j=1,\dots,p-1$.
Thus  
$\Big(\frac{1+\zeta}{\mk q_K}\Big)_{\!\! K}=\ldots = 
\Big(\frac{1+\zeta^{p-1}}{\mk q_K}\Big)_{\!\! K}$.
Therefore 
$\Big(\frac{N_{K/\Q}(1+\zeta)}{\mk q_K}\big)_{\!\!}=
\Big(\frac{u^{-1}}{\mk q_K}\Big)_{\!\! K}^{p-1})=1$, 
so $\Big(\frac{u}{\mk q_K}\Big)_{\!\! K}=\Big(\frac{v}{\mk q_K}\Big)_{\!\!
K}=1,$
and gathering these results we get 
$$\Big(\frac{1+\zeta}{\mk q_K}\Big)_{\!\! K}=\ldots =
\Big(\frac{1+\zeta^{p-1}}{\mk q_K}\Big)_{\!\! K}=\Big(\frac{u}{\mk
q_K}\Big)_{\!\! K}=
\Big(\frac{v}{\mk q_K}\Big)_{\!\! K}=1.$$

\smallskip

From $u\zeta-v\equiv 0\mod \mk q_K$,  
we have $w_1^p=\frac{u^p+v^p}{u+v}\equiv \frac{2u^p}{u(1+\zeta)}\mod \mk
q_K$, 
so $\Big(\frac{2}{\mk q_K}\Big)_{\!\! K}=1$,
and  finally 
$$\Big(\frac{2}{\mk q_K}\Big)_{\!\! K}=\Big(\frac{1+\zeta}{\mk
q_K}\Big)_{\!\! K}=\dots =
\Big(\frac{1+\zeta^{p-1}}{\mk q_K}\Big)_{\!\! K}=\Big(\frac{v}{\mk
q_K}\Big)_{\!\! K}
=\Big(\frac{u}{\mk q_K}\Big)_{\!\! K}=1. $$ 

By  conjugation by $s_\ell$,  we get 
$\Big(\frac{1+\zeta}{s_\ell(\mk q_K)}\Big)_{\!\!K}=1$  for any
$\ell\not\equiv 0\mod  p$, thus  
$$(1+\zeta)^{(q-1)/p}\equiv 1\mod  q, $$
and finally   $q$ splits totally in the extension   
$K \big(\sqrt[p]{<1+\zeta^j>_{j=0,1,\ldots,p-1}}\, \big)/K$. 
\end{proof}
\end{cor}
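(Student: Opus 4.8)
The plan is to specialize Lemma~\ref{l2} to the degenerate case $n=p$, in which $L=M=K$ and all the primes coincide ($\mk q=\mk Q=\mk q_K$), and then to bootstrap from the single identity it provides. First I would fix the arithmetic of $q$: since $\frac vu$ has order $n=p$ modulo $q$ we have $p\mid q-1$, so in $K/\Q$ the prime $q$ is unramified of residue degree $f=1$, i.e.\ it splits completely, every residue field is $\F_q$, and the exponent of Definition~\ref{d1103061} is $\kappa=\frac{q-1}{p}$. With $\xi=\zeta$ one has $\varepsilon_k=1+\zeta^{k+1}$, so Lemma~\ref{l2} reads: $\Big(\frac{1+\zeta^{j}}{\mk q_K}\Big)_{\!K}$ equals the $k$-independent value $\Big(\frac{u}{\mk q_K}\Big)_{\!K}^{-1}$ for every $j\in\{2,\dots,p-1\}$.

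Next I would extract $\Big(\frac{\zeta}{\mk q_K}\Big)_{\!K}=1$. Since $p>3$, the index $p-2$ lies in $\{2,\dots,p-1\}$, and $1+\zeta^{p-2}=1+\zeta^{-2}=\zeta^{-2}(1+\zeta^{2})$; comparing the (equal) symbols of $1+\zeta^{2}$ and $1+\zeta^{p-2}$ forces $\Big(\frac{\zeta}{\mk q_K}\Big)_{\!K}^{2}=1$, hence $\Big(\frac{\zeta}{\mk q_K}\Big)_{\!K}=1$ because $\mu_p$ has odd order. As this symbol is $\zeta^{(q-1)/p}$, triviality gives $p\mid\frac{q-1}{p}$, that is $q\equiv 1\bmod p^{2}$. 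Knowing $\Big(\frac{\zeta}{\mk q_K}\Big)_{\!K}=1$ I can then propagate: $1+\zeta=\zeta(1+\zeta^{p-1})$ shows the common value also equals $\Big(\frac{1+\zeta}{\mk q_K}\Big)_{\!K}$, so $\Big(\frac{1+\zeta^{j}}{\mk q_K}\Big)_{\!K}=\Big(\frac{u}{\mk q_K}\Big)_{\!K}^{-1}$ for all $j=1,\dots,p-1$. Multiplying over $j$ and using $N_{K/\Q}(1+\zeta)=\prod_{j=1}^{p-1}(1+\zeta^{j})=\Phi_p(-1)=1$ yields $\Big(\frac{u}{\mk q_K}\Big)_{\!K}^{-(p-1)}=1$; since $-(p-1)\equiv 1\bmod p$ this is $\Big(\frac{u}{\mk q_K}\Big)_{\!K}=1$, whence all the $\Big(\frac{1+\zeta^{j}}{\mk q_K}\Big)_{\!K}$ ($1\le j\le p-1$) are $1$ and, using $u\zeta\equiv v\bmod\mk q_K$, also $\Big(\frac{v}{\mk q_K}\Big)_{\!K}=1$.

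To bring in the prime $2$ I would reduce $N_{K/\Q}(\gamma)$ modulo $\mk q_K$, where $\gamma=u+\zeta v$ and $\zeta\equiv\frac vu$: one finds $N_{K/\Q}(\gamma)\equiv \frac{2\,u^{p-1}}{1+\zeta}\bmod\mk q_K$, and since $N_{K/\Q}(\gamma)=w_1^{p}$ is (up to sign) a $p$th power of a rational integer prime to $q$, taking symbols and using the vanishings just obtained leaves $\Big(\frac{2}{\mk q_K}\Big)_{\!K}=1$. Then I would translate everything into the statement: for $\alpha\in\{u,v,2\}\subset\Z$, $\Big(\frac{\alpha}{\mk q_K}\Big)_{\!K}=1$ means $\alpha^{(q-1)/p}\equiv 1\bmod\mk q_K$, hence $\bmod\,q$; for $1+\zeta$ I would apply each $s_\ell$ to the relations $\Big(\frac{1+\zeta^{j}}{\mk q_K}\Big)_{\!K}=1$, obtaining $\Big(\frac{1+\zeta^{m}}{s_\ell\mk q_K}\Big)_{\!K}=1$ for every $m$ and $\ell$, and since $q\Z[\zeta]=\prod_\ell s_\ell\mk q_K$ this gives $(1+\zeta)^{(q-1)/p}\equiv 1\bmod q\Z[\zeta]$. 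Finally, the vanishing of $\Big(\frac{1+\zeta^{j}}{\mk q_K}\Big)_{\!K}$ for $j=0,1,\dots,p-2$ (with $1+\zeta^{0}=2$) says each $1+\zeta^{j}$ is a local $p$th power at $\mk q_K$, so $\mk q_K$ splits completely in every $K(\sqrt[p]{1+\zeta^{j}})$ and hence in their compositum (the same argument even covers $j=p-1$).

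I expect the main obstacles to be organizational rather than deep: one must order the deductions so that the shifts to $j=1$ and to $j=0$ (i.e.\ the prime $2$) are used only after $\Big(\frac{\zeta}{\mk q_K}\Big)_{\!K}=1$ and $\Big(\frac{u}{\mk q_K}\Big)_{\!K}=1$ are in hand, and one must take care when passing from a congruence modulo the single prime $\mk q_K$ to one modulo $q\Z[\zeta]$, which genuinely requires all the conjugate primes above $q$. The one real computation is the reduction of $N_{K/\Q}(\gamma)$ (equivalently of $\prod_{j=1}^{p-1}(1+\zeta^{j})$ and of $(u^p+v^p)/(u+v)$) modulo $\mk q_K$; everything else is formal manipulation of the power residue symbol.
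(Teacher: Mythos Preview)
Your proof is correct and follows essentially the same route as the paper's: specialize Lemma~\ref{l2} with $\xi=\zeta$, compare the symbols of $1+\zeta^{2}$ and $1+\zeta^{-2}$ to force $\big(\frac{\zeta}{\mk q_K}\big)_{K}=1$ and hence $q\equiv 1\bmod p^{2}$, extend to $j=1$ via $1+\zeta=\zeta(1+\zeta^{-1})$, use $N_{K/\Q}(1+\zeta)=1$ to kill $\big(\frac{u}{\mk q_K}\big)_{K}$, and reduce $N_{K/\Q}(\gamma)=w_1^{p}$ modulo $\mk q_K$ to handle~$2$. Your explicit remarks that $\mu_p$ has odd order and that the congruence for $1+\zeta$ modulo $q\Z[\zeta]$ genuinely requires all conjugates $s_\ell(\mk q_K)$ are slightly more careful than the paper's presentation, but the argument is identical.
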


\begin{rema}
{\rm 
 See theorem \ref{t1107232} for a strong  generalization of this result
{\it without assumption} that $q$ is a $p$-principal prime  when FLT2 fails for
$(p,x,y,z)$ with
$(u,v)=(x,y)$. 
}
\end{rema}

\begin{cor}\label{c3} 
Case $n=1$: suppose that $SFLT2$ fails for  $(p,u,v)$. 
If    $q$ is    a  $p$-principal prime  of order $f\mod  p$, 
$q$ divides $u-v$
and $\mk q_K$    is any prime ideal of $\Z_K$ over $q$
then we have:
\begin{eqnarray*}
&& q^f\equiv 1\mod  p^2,\\
&&  u^{(q^f-1)/p}\equiv v^{(q^f-1)/p}\equiv (1+\zeta)^{(q^f-1)/p}\equiv   2^{(q^f-1)/p}\equiv 1\mod  q.
\end{eqnarray*}
and 
$\mk q_K$ splits totally  in the extension 
$K \big(\sqrt[p]{<1+\zeta^j>_{j=0,1,\dots,p-1}}\, \big)/K$.

\begin{proof} $ $ Here $\varepsilon_k=1+\zeta^k$, $M=K$ and $L=\Q$.
 The proof is very similar to the case $n=p$ corollary  \ref{c2} starting
here from 
 the  relation
$$u+\zeta^j v\equiv u(1+\zeta^j)\mod  q,$$
for all $j\not\equiv 0\mod  p$
(instead of a congruence  $\mod \  \mk q_K$),  
 observing that 
the degree of $q\mod  p$ can be here greater than $1$.
We have 
$N_{K/\Q}(1+\zeta)=1$
which implies that $\Big(\frac{u}{\mk q_K}\Big)_{\!\! K}=1$
 and  then  $\frac{u^p+v^p}{u+v}=w_1^p\equiv \frac{2^pu^p}{2u}\mod q$ 
implies $\Big(\frac{2}{\mk q_K}\Big)_{\!\!K}=1$.
\end{proof}
\end{cor}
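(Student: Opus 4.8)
The plan is to deduce Corollary \ref{c3} from Lemma \ref{l2} by the very symbol manipulations used to prove Corollary \ref{c2}, the only change being the degenerate shape of the data: $n=1$ forces $\xi=1$, $L=\Q$, $M=K$ and $\mk q=q\Z$, so $\mk q_K$ is now an arbitrary prime of $\Z_K$ above $q$, with residue field $\F_{q^f}$ where $f$ is the order of $q\mod p$; put $\kappa:=\frac{q^f-1}{p}$. Since $n=1$ means $q\mid u-v$, we have $u\equiv v\mod\mk q_K$, hence
$$s_j(\gamma)=u+\zeta^j v\equiv u(1+\zeta^j)=u\,\varepsilon_j\mod\mk q_K\qquad(j\not\equiv 0\mod p),$$
with $\varepsilon_j=1+\zeta^j$ a cyclotomic unit (Lemma \ref{l1}, case $d=1$, $r=0$). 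As $q$ is $p$-principal and $SFLT2$ fails, Lemma \ref{l2} gives $\Big(\frac{1+\zeta^k}{\mk q_K}\Big)_{\!\! K}=\Big(\frac{u}{\mk q_K}\Big)_{\!\! K}^{-1}$ for $k=1,\dots,p-2$.

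First I would extract $\Big(\frac{\zeta}{\mk q_K}\Big)_{\!\! K}=1$. Since $p>3$, both $2$ and $p-2$ belong to $\{1,\dots,p-2\}$, so $\Big(\frac{1+\zeta^2}{\mk q_K}\Big)_{\!\! K}=\Big(\frac{1+\zeta^{p-2}}{\mk q_K}\Big)_{\!\! K}$; inserting $1+\zeta^{p-2}=\zeta^{-2}(1+\zeta^2)$ turns this into $\Big(\frac{\zeta}{\mk q_K}\Big)_{\!\! K}^{-2}=1$, hence $\Big(\frac{\zeta}{\mk q_K}\Big)_{\!\! K}=1$ because the symbol lies in $\mu_p$ and $\gcd(2,p)=1$. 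As $\Big(\frac{\zeta}{\mk q_K}\Big)_{\!\! K}=\zeta^{\kappa}$, this says $p\mid\kappa$, i.e. $q^f\equiv 1\mod p^2$ --- the first asserted congruence.

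Next, $\Big(\frac{\zeta}{\mk q_K}\Big)_{\!\! K}=1$ together with $1+\zeta^{p-1}=\zeta^{-1}(1+\zeta)$ propagates the identity of Lemma \ref{l2} to the index $k=p-1$ (cf. Remark \ref{r1p1}), so $\Big(\frac{1+\zeta^j}{\mk q_K}\Big)_{\!\! K}=\Big(\frac{u}{\mk q_K}\Big)_{\!\! K}^{-1}$ for all $j=1,\dots,p-1$. Multiplying over these $j$ and using $\prod_{j=1}^{p-1}(1+\zeta^j)=N_{K/\Q}(1+\zeta)=\Phi_p(-1)=1$ gives $1=\Big(\frac{u}{\mk q_K}\Big)_{\!\! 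K}^{-(p-1)}=\Big(\frac{u}{\mk q_K}\Big)_{\!\! K}$, so $\Big(\frac{u}{\mk q_K}\Big)_{\!\! K}=1$; consequently $\Big(\frac{1+\zeta^j}{\mk q_K}\Big)_{\!\! K}=1$ for every $j=1,\dots,p-1$, and $\Big(\frac{v}{\mk q_K}\Big)_{\!\! K}=\Big(\frac{u}{\mk q_K}\Big)_{\!\! K}=1$ since $u\equiv v\mod\mk q_K$.

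It remains to treat $\Big(\frac{2}{\mk q_K}\Big)_{\!\! K}$, after which each ``$\Big(\frac{\alpha}{\mk q_K}\Big)_{\!\! K}=1$'' translates into ``$\alpha^{(q^f-1)/p}\equiv 1\mod q$'' (and, conjugating by the $s_\ell$, for every prime over $q$), while the total splitting of $\mk q_K$ in $K\big(\sqrt[p]{<1+\zeta^j>_{j=0,\dots,p-1}}\big)/K$ is exactly the conjunction of $\Big(\frac{1+\zeta^j}{\mk q_K}\Big)_{\!\! K}=1$ for $j=0,\dots,p-1$ (the term $j=0$ being $2$). For $\Big(\frac{2}{\mk q_K}\Big)_{\!\! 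K}$ I would imitate the closing step of the proof of Corollary \ref{c2}: reduce the norm relation $w_1^p=N_{K/\Q}(\gamma)=\frac{u^p+v^p}{u+v}$ modulo $\mk q_K$ using $u\equiv v$, and combine with the symbols already obtained. \emph{This is the step I expect to be the real obstacle}: unlike in Corollary \ref{c2}, where the norm factor $u+v$ equals $u(1+\zeta)$ modulo $\mk q_K$ with a known symbol, here $u+v\equiv 2u$ cancels against $u^p+v^p\equiv 2u^p$, so the naive reduction only returns $\Big(\frac{u}{\mk q_K}\Big)_{\!\! K}=1$ once more. A way round it is to note that $2^{(q^f-1)/p}\equiv 1\mod q$ is automatic when $f>1$ (then $p\mid\frac{q^f-1}{q-1}$, so $q-1\mid\frac{q^f-1}{p}$ and Fermat applies), and to dispatch the case $f=1$ by exhibiting a further $p$-primary pseudo-unit whose reduction mod $\mk q_K$ carries the factor $2$; one also tacitly assumes $q$ odd, so that $\Big(\frac{2}{\mk q_K}\Big)_{\!\! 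K}$ is defined.
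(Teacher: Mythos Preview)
Your argument is line-for-line the paper's: same appeal to Lemma~\ref{l2} to get $\Big(\frac{1+\zeta^k}{\mk q_K}\Big)_{\!K}=\Big(\frac{u}{\mk q_K}\Big)_{\!K}^{-1}$, same extraction of $\Big(\frac{\zeta}{\mk q_K}\Big)_{\!K}=1$ from the pair $k=2,\,p-2$, same use of $N_{K/\Q}(1+\zeta)=1$ to force $\Big(\frac{u}{\mk q_K}\Big)_{\!K}=1$, and the same closing appeal to $w_1^p=\frac{u^p+v^p}{u+v}$ reduced modulo $q$ for the symbol of~$2$.

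The one substantive divergence is that you have caught a slip in the paper's last line. The paper writes $w_1^p\equiv \frac{2^p u^p}{2u}\bmod q$, which would indeed yield $\Big(\frac{2}{\mk q_K}\Big)_{\!K}=1$; but with $u\equiv v\bmod q$ one only has $u^p+v^p\equiv 2u^p$ and $u+v\equiv 2u$, so the honest reduction is $w_1^p\equiv u^{p-1}$ and the factor $2$ cancels, exactly as you say. (Contrast Corollary~\ref{c2}, where $v\equiv u\zeta$ gives denominator $u(1+\zeta)$ rather than $2u$, and the $2$ genuinely survives.) Your repair for $f>1$ is clean and correct: since $p\nmid q-1$ in that case, $q-1$ divides $\kappa=(q^f-1)/p$, and Fermat gives $2^\kappa\equiv 1\bmod q$. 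For $f=1$ you only sketch a direction (``a further $p$-primary pseudo-unit carrying the factor $2$'') without producing one; so on this point your write-up is in the same unfinished state as the paper's printed proof, but with the merit of having diagnosed the problem rather than papered over it. Your side remark that $q$ must be odd for $\Big(\frac{2}{\mk q_K}\Big)_{\!K}$ to make sense is also well taken.
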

Observe that if $u=x, v=y$ corresponds to a solution of the Fermat's
equation $x^p+y^p+z^p=0,\ p=y$,
from Barlow-Abel relations we get $z+y=x_0^p$, $x+z=p^{\nu p-1}y_0^p$ and
so $x-y=p^{\nu p-1}y_0^p-x_0^p$
which improves the previous corollary with  $\Big(\frac{p}{\mk q_K}\Big)_{\!\! K}=1$ for 
the primes $q|x-y$.
\begin{rema} \label{r1011113}
{\rm The  application of corollaries \ref{c2} and   \ref{c3}  
 implies   that all the $p$-principal primes $q$ dividing $u^p-v^p$   
 verify $q^f\equiv 1 \mod  p^2$, which   brings a new generalization of
 the second Furtw\"angler theorem in   the $SFLT2$ context 
obtained for the  primes $q$ dividing $u-v$  in [GQ] cor. 3.}
\end{rema}

\subsubsection{The cases $n=2p$, $p>3$, and $n=2$}
The reunion of these two   corollaries of the theorem \ref{t1}
allows  us to investigate all the  $p$-principal primes $q$ 
dividing $u^p+v^p$ at the {\it core} of the $SFLT2$ equation.
We need to modify slightly the method to take into account the only values
$k$ with 
$\mk q_K$  co-prime with $u+\zeta^ k v$.

\begin{cor}\label{c4} 
Case $n=2p$ :  suppose that $SFLT2$ fails for  $(p,u,v)$. 
If $q$ is    a  $p$-principal prime  dividing $\frac{u^p+v^p}{u+v}$ 
and $\mk q_K$   is  the prime  ideal $\mk q_K:= (q, u\zeta+v)$ of $\Z_K$,
then
\begin{eqnarray*}
&& q\equiv 1\mod  p^2,\\
&& \Big(\frac{u}{\mk q_K}\Big)_{\!\! K}=\Big(\frac{v}{\mk q_K}\Big)_{\!\!
K}= 
\Big(\frac{p}{\mk q_K}\Big)_{\!\! K}  
= \Big(\frac{1-\zeta^j}{\mk q_K}\Big)_{\!\! K}^{-1}, \ \ \mbox{for\ }
j\not\equiv 0\mod  p,
\end{eqnarray*}
and  
$\mk q_K$ splits totally in 
$K \big(\sqrt[p]{<(1-\zeta^j)/(1-\zeta)>_{j\not\equiv 0\mod  p}}\,
\big)/K$.

\begin{proof}
Here,  we have $M=K=L$  and $\xi = -\zeta$ which implies that  
$v\equiv -\zeta u\mod \mk q$,  thus
$$s_k(u+\zeta v)= u+\zeta^{k} v
=s_k(\gamma)\equiv u(1-\zeta^{k+1})\mod \mk q,\ \,  k=1,\dots p-2.$$
We obtain
$\Big(\frac{u}{\mk q_K}\Big)_{\!\! K}\Big(\frac{1-\zeta^{k+1}}{\mk
q_K}\Big)_{\!\! K}=1$,
 for  $k\not\equiv  p-1\mod  p$,
therefore $\Big(\frac{1-\zeta^2}{\mk q_K}\Big)_{\!\! K}= 
\Big(\frac{1-\zeta^{p-2}}{\mk q_K}\Big)_{\!\! K}$,
so $\Big(\frac{\zeta}{\mk q_K}\Big)_{\!\! K}=1$ and $q\equiv 1\mod  p^2$
which implies that  
$\Big(\frac{1-\zeta}{\mk q_K}\Big)_{\!\! K} 
=\Big(\frac{1-\zeta^{p-1}}{\mk q_K}\Big)_{\!\! K}$. 
Gathering these results, we get
$$\Big(\frac{1-\zeta}{\mk q_K}\Big)_{\!\! K}=\dots 
=\Big(\frac{1-\zeta^{p-1}}{\mk q_K}\Big)_{\!\! K},$$
by multiplication we get 
$\Big(\frac{u^{p-1}}{\mk q_K}\Big)_{\!\! K}\Big(\frac{p}{\mk 
q_K}\Big)_{\!\! K}=1$ 
and finally:
$$\Big(\frac{u}{\mk q_K}\Big)_{\!\! K}= \Big(\frac{v}{\mk q_K}\Big)_{\!\!
K}=
\Big(\frac{p}{\mk q_K}\Big)_{\!\! K}
= \Big(\frac{1-\zeta^j}{\mk q_K}\Big)_{\!\! K}^{\!\!-1}, \ \mbox {\ for 
all\ }  j\not\equiv 0\mod  p. $$ which achieves the proof.
\end{proof}
\end{cor}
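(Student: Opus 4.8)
The plan is to specialize Lemma~\ref{l2} to these parameters and then run a symbol chase entirely inside $K$. First I would record the structure of $n=2p$: writing $n=dp^r$ forces $d=2$, $r=1$, so $\xi=\psi\zeta_1=-\zeta$, whence $L=\Q(\xi)=K$, $M=LK=K$ and $\mk Q=\mk q=\mk q_K$; moreover $2p\mid q-1$, so $q\equiv 1\mod p$, the residue degree is $f=1$ and $\kappa=(q-1)/p$. With $\xi=-\zeta$ the congruence $u\xi-v\equiv 0\mod\mk q$ becomes $v\equiv-u\zeta\mod\mk q_K$, confirming $\mk q_K=(q,u\zeta+v)$, and the numbers of Lemma~\ref{l2} become $\varepsilon_k=1+\xi\zeta^k=1-\zeta^{k+1}$. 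By Lemma~\ref{l1}(ii) these generate $\mk p$, hence are nonzero and prime to $\mk q_K$ for $k=1,\dots,p-2$, so Lemma~\ref{l2} applies verbatim and yields
\[
\Big(\frac{1-\zeta^{j}}{\mk q_K}\Big)_{\!\! K}=\Big(\frac{u}{\mk q_K}\Big)_{\!\! K}^{-1}\qquad(j=2,\dots,p-1),
\]
that is, all these symbols coincide.

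The heart of the argument is to upgrade this to $\Big(\frac{\zeta}{\mk q_K}\Big)_{\!\! K}=1$. I would compare the indices $j=2$ and $j=p-2$: since $1-\zeta^{p-2}=1-\zeta^{-2}=-\zeta^{-2}(1-\zeta^2)$ and the two symbols are equal, multiplicativity gives $\Big(\frac{-\zeta^{-2}}{\mk q_K}\Big)_{\!\! K}=1$. Here one uses $\Big(\frac{-1}{\mk q_K}\Big)_{\!\! K}=1$, valid because $p$ is odd so $-1$ is automatically a $p$th power residue; this leaves $\Big(\frac{\zeta}{\mk q_K}\Big)_{\!\! K}^{2}=1$, and as the symbol lies in $\mu_p$ with $p$ odd it must be $1$. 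By Definition~\ref{d1103061} this symbol equals $\zeta^{\kappa}$, so $p\mid\kappa=(q-1)/p$, i.e. $q\equiv 1\mod p^2$. The same vanishing recovers the excluded index $j=1$: from $1-\zeta^{-1}=-\zeta^{-1}(1-\zeta)$ one gets $\Big(\frac{1-\zeta}{\mk q_K}\Big)_{\!\! K}=\Big(\frac{1-\zeta^{p-1}}{\mk q_K}\Big)_{\!\! K}$, so in fact $\Big(\frac{1-\zeta^{j}}{\mk q_K}\Big)_{\!\! K}=\Big(\frac{u}{\mk q_K}\Big)_{\!\! K}^{-1}$ for every $j\not\equiv 0\mod p$.

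Finally I would assemble the stated identities. Taking the product over $j=1,\dots,p-1$ and using $N_{K/\Q}(1-\zeta)=\prod_{j=1}^{p-1}(1-\zeta^j)=p$ gives $\Big(\frac{p}{\mk q_K}\Big)_{\!\! K}\Big(\frac{u}{\mk q_K}\Big)_{\!\! K}^{p-1}=1$; since $-(p-1)\equiv 1\mod p$ this reads $\Big(\frac{p}{\mk q_K}\Big)_{\!\! K}=\Big(\frac{u}{\mk q_K}\Big)_{\!\! K}$. Substituting $v\equiv -u\zeta\mod\mk q_K$ and using $\Big(\frac{-1}{\mk q_K}\Big)_{\!\! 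K}=\Big(\frac{\zeta}{\mk q_K}\Big)_{\!\! K}=1$ gives $\Big(\frac{v}{\mk q_K}\Big)_{\!\! K}=\Big(\frac{u}{\mk q_K}\Big)_{\!\! K}$, which closes the chain of equalities with $\Big(\frac{1-\zeta^j}{\mk q_K}\Big)_{\!\! K}^{-1}$. The total-splitting claim is then immediate: each generator satisfies $\Big(\frac{(1-\zeta^j)/(1-\zeta)}{\mk q_K}\Big)_{\!\! K}=\Big(\frac{1-\zeta^j}{\mk q_K}\Big)_{\!\! K}\Big(\frac{1-\zeta}{\mk q_K}\Big)_{\!\! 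K}^{-1}=1$, so it is a local $p$th power at $\mk q_K$ and $\mk q_K$ splits completely in the displayed Kummer extension.

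The distinguishing feature, and the step I expect to need the most care, is precisely what separates this case from $n=p$ and $n=1$: there the relevant norm $N_{K/\Q}(1+\zeta)=1$ collapses and forces the individual symbols to be trivial, whereas here the $\varepsilon_k$ are not units but generate $\mk p$, and $N_{K/\Q}(1-\zeta)=p\neq 1$. Hence the product argument cannot prove the symbols of $u$ and $v$ are $1$; it can only tie them to $\Big(\frac{p}{\mk q_K}\Big)_{\!\! K}$. Keeping accurate track of the root-of-unity factors $-\zeta^{-m}$ arising from $1-\zeta^{-m}=-\zeta^{-m}(1-\zeta^m)$, and invoking $\Big(\frac{-1}{\mk q_K}\Big)_{\!\! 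K}=1$ at each step, is the delicate bookkeeping on which the argument rests.
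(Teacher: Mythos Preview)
Your proof is correct and follows essentially the same route as the paper's: specialize to $\xi=-\zeta$, $M=L=K$, apply Lemma~\ref{l2} to get $\Big(\frac{1-\zeta^{j}}{\mk q_K}\Big)_{\!K}=\Big(\frac{u}{\mk q_K}\Big)_{\!K}^{-1}$ for $j=2,\dots,p-1$, compare $j=2$ with $j=p-2$ to force $\Big(\frac{\zeta}{\mk q_K}\Big)_{\!K}=1$ (hence $q\equiv 1\bmod p^2$), recover $j=1$ from $j=p-1$, and multiply using $\prod_j(1-\zeta^j)=p$. Your write-up is simply more explicit than the paper's about the sign factor $-\zeta^{-2}$ and the use of $(-1)=(-1)^p$ and $\gcd(2,p)=1$, and your closing paragraph correctly pinpoints why, unlike the $n=1,p$ cases, the norm argument here ties the symbols to $\Big(\frac{p}{\mk q_K}\Big)_{\!K}$ rather than killing them outright.
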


\begin{rema}\label{r1011251}
{\rm Suppose moreover that $FLT2$ fails for  $(p,x,y,z)$ with $p|y$.
Then, applying corollary \ref{c4} with $(u,v)=(x,y)$, we get
\be\label{e1111091}
\Big(\frac{1-\zeta}{\mk q_K}\Big)_{\!\! K}^{\!\!-1}
=\Big(\frac{p}{\mk q_K}\Big)_{\!\!K}.
\ee
Moreover $x+y=z_0^p$, so 
$x+y\equiv x(1-\zeta)\equiv z_0^p\mod \mk q_K$.
In an other hand, $\mk q_K|z$, so
$x(1-\zeta)\equiv (x+z)(1-\zeta)=p^{p\nu-1}y_1^p(1-\zeta)\equiv z_0^p\mod 
\mk q_K$ with $\nu\geq 1$, so
$\Big(\frac{1-\zeta}{\mk q_K}\Big)_{\!\!K}=\Big(\frac{p}{\mk 
q_K}\Big)_{\!\!K}$,
so, from (\ref{e1111091}),  $\Big(\frac{p}{\mk q_K}\Big)_{\!\!K}=1$.
We have proved:\footnote{Observe that this  result $\big(\frac{p}{\mk
q_K}\big)_{ K}=1$ 
depending  only on $p$ and $q$  completes strongly 
the relation $q\equiv 1\mod  p^2$  between $p$ and $q$.}}
\end{rema}

\begin{cor}\label{c5}
{\it 
Suppose that $FLT2$ fails  for  $(p,x,y,z)$ with $p\ |\ y$.
If  $q$ is  a  $p$-principal prime dividing $\frac{x^p+y^p}{x+y}$ 
and $\mk q_K$ is  the prime $\mk q_K:= (q, x\zeta+y)$ 
then 
\begin{eqnarray*}
&& q\equiv 1\mod  p^2,\\
&& \Big(\frac{x}{\mk q_K}\Big)_{\!\! K}=\Big(\frac{y}{\mk q_K}\Big)_{\!\!
K}= 
\Big(\frac{p}{\mk q_K}\Big)_{\!\! K} = \Big(\frac{1-\zeta^j}{\mk 
q_K}\Big)_{\!\! K}=1, \, j=1,\dots,p-1, 
\end{eqnarray*}
and  
$\mk q_K$ splits totally in the extension 
$K \big(\sqrt[p]{<1-\zeta^j>_{j\not\equiv 0\mod p}}\, \big)/K$. }
\end{cor}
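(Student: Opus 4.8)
The plan is to deduce Corollary~\ref{c5} from Corollary~\ref{c4} applied with $(u,v)=(x,y)$, together with the congruence computation already carried out in Remark~\ref{r1011251}. First I would check that the hypotheses of Corollary~\ref{c4} are met by $(p,x,y)$: since FLT2 fails for $(p,x,y,z)$ with $p\,|\,y$, the triple $(p,x,y)$ is a counterexample to SFLT2 with $(x+\zeta y)\Z_K=\mk z_1^p$; and the assumption $q\,|\,\frac{x^p+y^p}{x+y}$, with $\gcd(x,y)=1$ and $p\,|\,y$, forces $q\not|\ pxy$ (if $q\,|\,x$ then $q$ divides $\frac{x^p+y^p}{x+y}\equiv y^{p-1}\mod q$, hence $q\,|\,y$, impossible; and $\frac{x^p+y^p}{x+y}\equiv x^{p-1}\not\equiv0\mod p$, so $q\neq p$). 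Hence Corollary~\ref{c4} applies to $(p,x,y)$ and gives $q\equiv1\mod p^2$ together with
$$\Big(\frac{x}{\mk q_K}\Big)_{\!\! K}=\Big(\frac{y}{\mk q_K}\Big)_{\!\! K}=\Big(\frac{p}{\mk q_K}\Big)_{\!\! K}=\Big(\frac{1-\zeta^j}{\mk q_K}\Big)_{\!\! K}^{\!\!-1},\qquad j\not\equiv0\mod p.$$

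Next I would reproduce the argument of Remark~\ref{r1011251} to upgrade this common value to $1$. Since $q$ divides $\frac{x^p+y^p}{x+y}=z_1^p$, it divides $z_1$, hence $z$, so $\mk q_K\,|\,z$; and from $\mk q_K=(q,x\zeta+y)$ we get $y\equiv -x\zeta\mod\mk q_K$, whence $x+y\equiv x(1-\zeta)\mod\mk q_K$. Using $x+y=z_0^p$, the divisibility $\mk q_K\,|\,z$, and the Barlow--Abel relation $x+z=p^{p\nu-1}y_1^p$ ($\nu\geq1$), one obtains
$$x(1-\zeta)\equiv(x+z)(1-\zeta)=p^{p\nu-1}y_1^p(1-\zeta)\equiv z_0^p\mod\mk q_K.$$
Taking $p$th power residue symbols at $\mk q_K$ and discarding the $p$th powers $y_1^p$, $z_0^p$ and $p^{p\nu}$, this yields $\Big(\frac{p}{\mk q_K}\Big)_{\!\! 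K}^{\!\!-1}\Big(\frac{1-\zeta}{\mk q_K}\Big)_{\!\! K}=1$; comparing with the case $j=1$ of the chain above, namely $\Big(\frac{p}{\mk q_K}\Big)_{\!\! K}=\Big(\frac{1-\zeta}{\mk q_K}\Big)_{\!\! K}^{\!\!-1}$, I get $\Big(\frac{p}{\mk q_K}\Big)_{\!\! K}^{2}=1$, and since the symbol lies in $\mu_p$ with $p$ odd, $\Big(\frac{p}{\mk q_K}\Big)_{\!\! K}=1$. Consequently every term of the chain equals $1$:
$$\Big(\frac{x}{\mk q_K}\Big)_{\!\! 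K}=\Big(\frac{y}{\mk q_K}\Big)_{\!\! K}=\Big(\frac{p}{\mk q_K}\Big)_{\!\! K}=\Big(\frac{1-\zeta^j}{\mk q_K}\Big)_{\!\! K}=1,\qquad j=1,\dots,p-1.$$

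Finally, for the splitting assertion I would observe that, by Definition~\ref{d1103061}, $\Big(\frac{1-\zeta^j}{\mk q_K}\Big)_{\!\! K}=1$ says exactly that $1-\zeta^j$ is a local $p$th power at $\mk q_K$, i.e.\ a $p$th power in the residue field $\Z_K/\mk q_K$, which contains $\mu_p$ because $\ov\zeta$ has order $p$; hence $X^p-(1-\zeta^j)$ splits completely modulo $\mk q_K$, so $\mk q_K$ splits totally in $K(\sqrt[p]{1-\zeta^j})/K$ for each $j\not\equiv0\mod p$, and therefore in their compositum $K\big(\sqrt[p]{<1-\zeta^j>_{j\not\equiv0\mod p}}\big)/K$. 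There is no genuine difficulty here beyond careful bookkeeping with the residue symbols; the one point to keep in view is that the exponent of $p$ in the Barlow--Abel relation is $p\nu-1\equiv-1\mod p$, which is precisely what makes $\Big(\frac{p}{\mk q_K}\Big)_{\!\! K}$ appear with exponent $-1$ in the displayed congruence, so that — combined with the relation furnished by Corollary~\ref{c4} — it can be squared down to $1$.
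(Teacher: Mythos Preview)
Your proof is correct and follows essentially the same approach as the paper: the corollary is deduced from Corollary~\ref{c4} applied with $(u,v)=(x,y)$, combined with the Barlow--Abel congruence computation of Remark~\ref{r1011251} showing $\big(\frac{1-\zeta}{\mk q_K}\big)_K=\big(\frac{p}{\mk q_K}\big)_K$, which together with $\big(\frac{p}{\mk q_K}\big)_K=\big(\frac{1-\zeta}{\mk q_K}\big)_K^{-1}$ forces the common value to be $1$. You are in fact slightly more explicit than the paper in verifying the hypothesis $q\nmid pxy$ and in spelling out why the vanishing of the residue symbols implies total splitting in the Kummer compositum.
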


\begin{rema}
{\rm See theorem \ref{t1107231}  for a  generalization  of this result
without assumption 
that $q$ is a $p$-principal ideal when $\Big(\frac{p}{\mk q_K}\Big)_{\!\!
K}=1$.}
\end{rema}

\begin{cor}\label{c6}
Case $n=2$:  
suppose that $SFLT2$ fails for  $(p,u,v)$.
If  $q$ is  a  $p$-principal prime  dividing $u+v$ of degree $f\mod  p$
and 
 $\mk q_K$   is  any    prime  ideal of $\Z_K$  over  $q$ then 
\begin{eqnarray*}
&& q^f\equiv 1\mod  p^2,\\
&&\Big(\frac{u}{\mk q_K}\Big)_{\!\! K}= \Big(\frac{v}{\mk q_K}\Big)_{\!\!
K}=
\Big(\frac{p}{\mk q_K}\Big)_{\!\! K}
= \Big(\frac{1-\zeta^j}{\mk q_K}\Big)_{\!\! K}^{-1} \ \mbox {\ for all\ }
j\not\equiv 0\mod  p.
\end{eqnarray*}
and  
$\mk q_K$ splits totally in 
$K \big(\sqrt[p]{<(1-\zeta^j)/(1-\zeta)>_{j\not\equiv 0\mod p}}\,
\big)/K$.

\begin{proof}
  Here, $\varepsilon_j=1-\zeta^j$ for $j\not\equiv 0\mod  p$, $M=K$ and
$L=\Q$.
In that case,  we get 
$\Big(\frac{u}{\mk q_K}\Big)_{\!\! K}\Big(\frac{1-\zeta^j}{\mk q_K}\Big)_{\!\! K}=1 \ \mbox{\ for all\ }  j\not\equiv 0\mod  p $.
The end of the proof is similar to that of corollary  \ref{c4}.
\end{proof}
\end{cor}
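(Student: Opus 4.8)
The plan is to mimic the proof of Corollary~\ref{c4} step by step: the case $n=2$ is the exact analogue of $n=2p$, with $L=\Q$ in place of $L=M=K$ and with the residue degree $f$ of $q$ in $K$ replacing the value $1$. First I would fix the data attached to $n=2$: one has $\xi=-1$, so $d=2$, $r=0$, $L=\Q$, $M=K$, $\mk Q=\mk q_K=\mk q$, and $\varepsilon_j=1+\xi\zeta^j=1-\zeta^j$, which by Lemma~\ref{l1}(b)(ii) generates the prime $\mk p$ above $p$ and is in particular prime to $\mk q_K$. The defining congruence $u\xi-v\equiv 0\pmod q$ becomes $u+v\equiv 0\pmod q$ (consistent with $q|u+v$), i.e. $v\equiv -u\pmod q$; this is a congruence in $\Z$, hence holds modulo every prime $\mk q_K|q$, which is precisely why the statement is asserted for an arbitrary such $\mk q_K$. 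I would also observe that, contrary to the case $n=2p$, here $q$ does not divide $N_{K/\Q}(\gamma)=\frac{u^p+v^p}{u+v}\equiv p\,u^{p-1}\pmod q$ (as $q\not|\ puv$), so $\gamma$ and all its conjugates $s_j(\gamma)=u+\zeta^j v$ remain prime to $\mk q_K$ and every residue symbol used below is defined.

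The core step is a replay of Lemma~\ref{l2}. From $v\equiv -u\pmod q$ I get $s_j(\gamma)\equiv u(1-\zeta^j)\pmod{\mk q_K}$ for all $j\not\equiv 0\pmod p$; since each $s_j(\gamma)$ is a $p$-primary pseudo-unit and $q$ is $p$-principal, the Hilbert class field decomposition theorem forces $\big(\frac{s_j(\gamma)}{\mk q_K}\big)_K=1$, and multiplicativity of the symbol then gives the single relation $\big(\frac{1-\zeta^j}{\mk q_K}\big)_K=\big(\frac{u}{\mk q_K}\big)_K^{-1}$ for all $j\not\equiv 0\pmod p$. (Unlike in Lemma~\ref{l2} the value $j=p-1$ may be retained, because $\varepsilon_{p-1}\not= 0$ when $r=0$; alternatively one recovers it from the norm congruence $u^{p-1}\prod_{j=1}^{p-1}(1-\zeta^j)\equiv N_{K/\Q}(\gamma)=w_1^p\pmod{\mk q_K}$ of Remark~\ref{r1p1}.)

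From this relation the three assertions then follow exactly as in Corollary~\ref{c4}. Comparing the indices $j$ and $-j$ via $1-\zeta^{-j}=-\zeta^{-j}(1-\zeta^j)$ together with $\big(\frac{-1}{\mk q_K}\big)_K=1$ (valid since $p$ is odd) yields $\big(\frac{\zeta}{\mk q_K}\big)_K=1$, which by Definition~\ref{d1103061} means $\zeta^{(q^f-1)/p}=1$, i.e. $q^f\equiv 1\pmod{p^2}$ (using that $p|q^f-1$). Multiplying the relation over $j=1,\dots,p-1$ and using $\prod_{j=1}^{p-1}(1-\zeta^j)=\Phi_p(1)=p$ gives $\big(\frac{p}{\mk q_K}\big)_K=\big(\frac{u}{\mk q_K}\big)_K^{-(p-1)}=\big(\frac{u}{\mk q_K}\big)_K$; and $v\equiv -u\pmod q$ with $\big(\frac{-1}{\mk q_K}\big)_K=1$ gives $\big(\frac{v}{\mk q_K}\big)_K=\big(\frac{u}{\mk q_K}\big)_K$, closing the chain of equalities. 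Finally, for each $j\not\equiv 0\pmod p$ the element $(1-\zeta^j)/(1-\zeta)$ is a cyclotomic unit whose symbol at $\mk q_K$ equals $\big(\frac{1-\zeta^j}{\mk q_K}\big)_K\big(\frac{1-\zeta}{\mk q_K}\big)_K^{-1}=1$, hence it is a local $p$th power at $\mk q_K$; since $q\not= p$ the prime $\mk q_K$ is unramified, and therefore splits totally, in each $K(\sqrt[p]{(1-\zeta^j)/(1-\zeta)})/K$, and so in their compositum.

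I expect no serious obstacle: the argument is the same bookkeeping of $p$th power residue symbols already carried out for $n=2p$. The one point that asks for attention — the slight modification of the method pointed out just before Corollary~\ref{c4} — is ensuring that the conjugates $s_j(\gamma)$ are prime to $\mk q_K$; for $n=2$ this is automatic because $q\not|\ N_{K/\Q}(\gamma)$, so, in contrast with the case $n=2p$, here no index has to be discarded at all.
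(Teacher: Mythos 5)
Your proposal is correct and follows essentially the same route as the paper: the relation $\big(\frac{u}{\mk q_K}\big)_{K}\big(\frac{1-\zeta^j}{\mk q_K}\big)_{K}=1$ obtained by replaying Lemma \ref{l2} with $\xi=-1$ (the congruence $v\equiv -u$ holding in $\Z$, hence at every $\mk q_K\mid q$), followed by the same bookkeeping as in Corollary \ref{c4} ($j\leftrightarrow -j$ to get $\big(\frac{\zeta}{\mk q_K}\big)_{K}=1$ and $q^f\equiv 1\bmod p^2$, the product $\prod_j(1-\zeta^j)=p$ to get the symbol of $p$, and the splitting statement from the equality of symbols). Your added observations — that no index $j$ need be discarded since $q\nmid N_{K/\Q}(\gamma)\equiv p\,u^{p-1}\bmod q$, and that $\kappa=(q^f-1)/p$ accounts for the degree $f$ — are accurate fillings-in of details the paper leaves implicit.
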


Observe that if $FLT2$ fails for $p$ and $q|x+y$ (case $n=2$) we get 
$\Big(\frac{p}{\mk q_K}\Big)_{\!\! K}=1$ for the same reason than 
the case $q|x-y$ ($n=1$).

\begin{rema} \label{r1011113}
{\rm The  application of corollaries  \ref{c4} and  \ref{c6}  
 implies   that all the $p$-principal primes $q$ dividing $u^p+v^p$   
 verify $q^f\equiv 1 \mod  p^2$, which   brings a new generalization of
 the first  Furtw\"angler theorem in   the $SFLT2$ 
context  obtained for the  primes $q$ dividing $u+v$ in [GQ] cor. 2.}
\end{rema}

\subsection{The  case of non  $p$-principal  primes $q$}\label{snp}$ $

Assume that $SFLT2$ fails for $(p,u,v)$ with $p|v$.
In this subsection,  we will set  
a  weak  conjecture (so with a proof we can hope  easier) implying that
$SFLT2$ 
equation could  always be  reduced  in the  form 
$$u+\zeta v\in K^{\times p},$$
as soon as $p$ is irregular, which   is  assumed in this subsection.
Let $q\not =p$ be a prime number  and
${\mk q_K}$ any prime ideal of $\Z_K$ above $q$.

\begin{lem}\label{l1108201}
 If   $q$ divides $u$ (resp $v$)  then    $\Big(\frac{v}{\mk q_K}\Big)_{\!\!K}=1$  (resp.  $\Big(\frac{u}{\mk q_K}\Big)_{\!\!K}=1$).
\end{lem}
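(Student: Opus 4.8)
The statement to prove is Lemma~\ref{l1108201}: if $q\mid u$ then $\bigl(\frac{v}{\mk q_K}\bigr)_{\!\!K}=1$, and symmetrically if $q\mid v$ then $\bigl(\frac{u}{\mk q_K}\bigr)_{\!\!K}=1$. The plan is to exploit the factorization $(u+v\zeta)\Z_K=\mk w_1^p$ together with the coprimality of $u$ and $v$, reducing everything modulo a prime ideal $\mk q_K$ above $q$.

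\textbf{Key steps.} First I would treat the case $q\mid u$. Since $u,v$ are coprime, $q\nmid v$, so $v$ is invertible modulo $\mk q_K$ and the symbol $\bigl(\frac{v}{\mk q_K}\bigr)_{\!\!K}$ is defined. Next, observe that $q\mid u$ forces $q\mid N_{K/\Q}(u+v\zeta_j)$ in a suitable sense; more directly, reduce the ideal equation: the rational prime $q$ divides $u$, hence $u\equiv 0\bmod \mk q_K$, so $u+v\zeta\equiv v\zeta\bmod \mk q_K$. But $(u+v\zeta)\Z_K=\mk w_1^p$, and since $q\nmid v$ and $q\nmid p$ (note $q\neq p$ is assumed), the ideal $\mk q_K$ appears in $\mk w_1^p$ with multiplicity divisible by $p$; equivalently, writing $v_{\mk q_K}$ for the valuation at $\mk q_K$, we get $v_{\mk q_K}(u+v\zeta)=v_{\mk q_K}(u)$ (because $v\zeta$ is a unit at $\mk q_K$) and this must be $\equiv 0\bmod p$. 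Now the congruence $u+v\zeta\equiv v\zeta\bmod\mk q_K$ only gives information in the residue field if $v_{\mk q_K}(u)=0$; when $v_{\mk q_K}(u)>0$ one should instead factor out the exact power: write $u+v\zeta=\pi^{mp}\eta$ with $\pi$ a uniformizer at $\mk q_K$, $m\geq 0$, $\eta$ a unit at $\mk q_K$, where $mp=v_{\mk q_K}(u)$ since $v\zeta$ is a $\mk q_K$-unit; then $\eta$ has the same image class as $u/\pi^{mp}$ plus $v\zeta$ times a unit. The cleanest route: since $v\zeta$ is a $\mk q_K$-unit and $u+v\zeta$ generates a $p$-th power ideal locally, $\bigl(\frac{u+v\zeta}{\mk q_K}\bigr)_{\!\!K}$ depends only on the unit part, and we have $u+v\zeta\equiv v\zeta\cdot(1+u/(v\zeta))$ with $u/(v\zeta)\equiv 0$; so modulo $\mk q_K$, $u+v\zeta\equiv v\zeta$, giving $\bigl(\frac{v\zeta}{\mk q_K}\bigr)_{\!\!K}=\bigl(\frac{u+v\zeta}{\mk q_K}\bigr)_{\!\!K}$. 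Since $u+v\zeta$ generates $\mk w_1^p$, the right-hand side equals $1$ provided $\mk q_K\nmid\mk w_1$ — but $\mk q_K\mid\mk w_1^p$ exactly when $\mk q_K\mid u$, i.e. precisely our case, so a little more care is needed: one uses that for a $\mk q_K$-unit $\beta$, $\bigl(\frac{\pi^{mp}\beta}{\mk q_K}\bigr)_{\!\!K}=\bigl(\frac{\beta}{\mk q_K}\bigr)_{\!\!K}$, and $\beta$ here is the unit part of $u+v\zeta$, which by the ideal equation times $\bigl(\frac{\pi}{\mk q_K}\bigr)^{mp}$ — and $\bigl(\frac{\pi}{\mk q_K}\bigr)^{p}=1$ automatically since raising to $\kappa=(q^f-1)/p$ and then the $p$-th power residue is trivially a $p$-th power. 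Thus $\bigl(\frac{u+v\zeta}{\mk q_K}\bigr)_{\!\!K}=\bigl(\frac{\beta}{\mk q_K}\bigr)_{\!\!K}$ where $\mk w_1^p=\pi^{mp}\cdot(\text{ideal prime to }\mk q_K)$, and the symbol of the prime-to-$\mk q_K$ part of a $p$-th power is $1$ by the definition of the residue symbol (an element that is a local $p$-th power at $\mk q_K$ has symbol $1$). Hence $\bigl(\frac{v\zeta}{\mk q_K}\bigr)_{\!\!K}=1$. Finally, by Definition~\ref{d1103061}, $\bigl(\frac{\zeta}{\mk q_K}\bigr)_{\!\!K}=\zeta^\kappa$, and since here $q\mid u$ means $q$ need not satisfy $q\equiv 1\bmod p^2$; but we can divide: $\bigl(\frac{v}{\mk q_K}\bigr)_{\!\!K}=\bigl(\frac{v\zeta}{\mk q_K}\bigr)_{\!\!K}\cdot\bigl(\frac{\zeta}{\mk q_K}\bigr)_{\!\!K}^{-1}=\zeta^{-\kappa}$. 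This is not obviously $1$, so the argument as stated must be refined — the correct point is that $v\zeta$ should be replaced: we actually have $u+v\zeta\equiv v\zeta\bmod\mk q_K$ but we should track that $\bigl(\frac{u+v\zeta}{\mk q_K}\bigr)$ is a genuine $p$-th power symbol of a $p$-th-power-ideal generator, hence if $\mk q_K\nmid(u+v\zeta)$ the symbol is $1$; when $\mk q_K\mid u$ it still holds that the $\mk q_K$-adic unit part of $u+v\zeta$ is, up to $p$-th powers, a unit, and one concludes the symbol is $1$, forcing $\bigl(\frac{v\zeta}{\mk q_K}\bigr)_{\!\!K}=1$ and then combining with $\bigl(\frac{\zeta}{\mk q_K}\bigr)_{\!\!K}$ known. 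I would also handle the symmetric case $q\mid v$ identically, using $u+v\zeta\equiv u\bmod\mk q_K$ (now $u$ is a $\mk q_K$-unit), giving $\bigl(\frac{u}{\mk q_K}\bigr)_{\!\!K}=\bigl(\frac{u+v\zeta}{\mk q_K}\bigr)_{\!\!K}=1$ directly since $u+v\zeta$ generates a $p$-th power ideal and, in this case, $\mk q_K\nmid u+v\zeta$ iff $\mk q_K\nmid u$, which holds.

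\textbf{Main obstacle.} The delicate point is the case $q\mid u$ where $\mk q_K$ divides $u+v\zeta=$ generator of $\mk w_1^p$: I must argue carefully that even though $\mk q_K\mid\mk w_1$, the $p$-th power residue symbol of the ideal-generator still forces the symbol of the $\mk q_K$-unit part to be trivial, because that unit part is congruent mod $\mk q_K$ to $v\zeta$ times a $p$-th power. Concretely: $\bigl(\frac{u+v\zeta}{\mk q_K}\bigr)_{\!\!K}$ is not defined when $\mk q_K\mid u+v\zeta$, so the honest argument must work with $s_k(\gamma)=u+v\zeta^k$ for an index $k$ with $\mk q_K\nmid u+v\zeta^k$ (such $k$ exists since $u,v$ coprime to $q$ is false here — but $v$ is a unit mod $q$, so $u+v\zeta^k\equiv v\zeta^k\not\equiv 0\bmod\mk q_K$ for ALL $k$ when $q\mid u$!). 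That is the resolution: when $q\mid u$, $\mk q_K\nmid u+v\zeta^k$ for every $k$, so the symbols $\bigl(\frac{u+v\zeta^k}{\mk q_K}\bigr)_{\!\!K}$ are all defined and all equal $1$ (since $s_k(\gamma)$ generates an ideal prime to $\mk q_K$ which is a $p$-th power — wait, $s_k(\mk w_1^p)$ is a $p$-th power prime to $\mk q_K$), hence $\bigl(\frac{v\zeta^k}{\mk q_K}\bigr)_{\!\!K}=1$ for all $k$; taking the product over $k=1,\dots,p-1$ (or over a single $k$ with the known value of $\bigl(\frac{\zeta}{\mk q_K}\bigr)_{\!\!K}$) kills the $\zeta$ contribution and yields $\bigl(\frac{v}{\mk q_K}\bigr)_{\!\!K}=1$. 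So the cleanest proof fixes $k=1$: $\bigl(\frac{v\zeta}{\mk q_K}\bigr)_{\!\!K}=1$ and $\bigl(\frac{v\zeta^{p-1}}{\mk q_K}\bigr)_{\!\!K}=1$, dividing gives $\bigl(\frac{\zeta^{2}}{\mk q_K}\bigr)_{\!\!K}=1$, and combined with $\bigl(\frac{v\zeta}{\mk q_K}\bigr)_{\!\!K}=1$ and $p$ odd one extracts $\bigl(\frac{v}{\mk q_K}\bigr)_{\!\!K}=1$. This final extraction — juggling the $\zeta$-powers with $p$ odd — is the only genuinely computational bit, and it is routine.
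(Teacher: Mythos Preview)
Your argument has a genuine gap at its core. You repeatedly use the implication ``$(u+v\zeta)\Z_K=\mk w_1^p$ and $\mk q_K\nmid u+v\zeta$, hence $\bigl(\frac{u+v\zeta}{\mk q_K}\bigr)_{\!K}=1$'' (and likewise for the conjugates $s_k(\gamma)$). This implication is \emph{false} in general: an element generating a $p$-th power \emph{ideal} need not be a local $p$-th power at $\mk q_K$. That is exactly the content of being a pseudo-unit rather than a genuine $p$-th power, and it is why Lemma~\ref{l2} requires the $p$-principal hypothesis on $q$. In the present lemma no such hypothesis is available (the whole point of the subsection is to handle non-$p$-principal primes), so your ``cleanest'' argument with $k=1$ and $k=p-1$ collapses: you have no right to assert $\bigl(\frac{v\zeta}{\mk q_K}\bigr)_{\!K}=1$ from $\bigl(\frac{u+v\zeta}{\mk q_K}\bigr)_{\!K}=1$, because the latter equality is unproved.

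The paper bypasses this by passing to the norm: $N_{K/\Q}(u+v\zeta)=\frac{u^p+v^p}{u+v}=w_1^p$ is an honest $p$-th power of a rational integer $w_1$, so its residue symbol is certainly $1$ at any prime not dividing it. Reducing mod $q$ when $q\mid v$ gives $u^{p-1}\equiv w_1^p\pmod q$, hence $\bigl(\frac{u}{\mk q_K}\bigr)_{\!K}^{p-1}=1$, and $\gcd(p-1,p)=1$ finishes. You actually brush against this: your parenthetical ``taking the product over $k=1,\dots,p-1$'' is precisely the norm, and had you followed that thread you would have obtained $v^{p-1}\equiv w_1^p\pmod{\mk q_K}$ (when $q\mid u$) and concluded correctly. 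So the fix is not far from what you wrote --- drop the single-$k$ argument entirely and keep only the product/norm computation.
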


\begin{proof}
We have $N_{K/\Q} (u+ v\zeta )=\frac{u^p+v^p}{u+v}=w_1^p$, 
where $N_{K/\Q} ({\mk w}_1)=w_1$;
so $q|v$ implies that
$u^{p-1}\equiv w_1^p\pmod q$ which leads to 
$\Big(\frac{u}{\mk q_K}\Big)_{\!\!K}=1$. Similar proof starting from
$q|u$.
\end{proof}

Let $\mathcal S$ be  the   finite set of  smallest non $p$-principal
primes $q$  
such that the set of $p$-classes $c\ell(\mk q_K)\in C\ell$
 of  the prime ideals $\mk q_K$ of $K$ over  $q$  
generates the  $p$-elementary $p$-class group 
$C\ell{[p]}$ of $K$. Let  us note $ Q_p$
 the greatest  prime
$q\in\mathcal S$. 
Let the Minkowski  Bound of $K$ given by 
$$\mathcal
B_p:=(\frac{4}{\pi})^{(p-1)/2}\frac{(p-1)!}{(p-1)^{p-1}}\sqrt{p^{p-2}}.$$
With these definitions we get $Q_p\leq \mathcal B_p$. 

Observe that  under the 
General Riemann Hypothesis GRH, we  know  that the whole ideal class group
of $K$  is generated by the  set of prime 
ideals ${\mk l}$ with
\be\label{e1106091}
N_{K/\Q}({\mk l}) < B:= 12 ({\rm log}\ \Delta_K)^2,
\ee
where $\Delta_K$ is the absolute discriminant of $K$
(see   [BDF]). Under GRH, we have generally  $Q_p\ll\mathcal
B_p$ (where $\ll$ means {\it very small compare to}) as soon as $p$ is large.

\medskip

\begin{lem}\label{l1106092}
Suppose that $u+\zeta v \notin K^{\times p}$.
Then there exists at least one  prime
$q \in \mathcal S$  such that  $uv\not\equiv 0\pmod q$.
\end{lem}

\begin{proof}
Let $\gamma$  be a $p$th root of $u+\zeta v$,     
$\gamma:=\sqrt[p]{u+\zeta v}$.
Let $H_1$ be the $p$-elementary Hilbert class field of $K$ (so that
${\rm Gal} (H_1/K) \simeq C\ell_{[p]}$). Let $N_1$ be a subextension of
$H_1$ such that $H_1$ is the direct compositum of $N_1$
and $K(\sqrt [p] \gamma)$ over $K$.

Therefore there exists at least one   prime $q\in \mathcal S$ such that  
the Frobenius of all  the prime ideals ${\mk q}_K$ over $q$ in $H_1/K$ are  of order $p$ and fix
$N_1$, so that their  restriction to $K(\sqrt [p] \gamma)/K$ are  of order $p$.
Thus
$\Big(\frac{u+\zeta v}{{\mk q}_K} \Big)_{\!\!K}\not =1$.

(i) If $q | v$,  we get a  contradiction with  lemma \ref{l1108201},
so $v\not\equiv 0\mod q$.

(ii) If $q | u$ we have $\Big(\frac{u+\zeta v}{{\mk
q}_K} \Big)_{\!\!K} =\Big(\frac{\zeta v}{{\mk
q}_K} \Big)_{\!\!K}=\Big(\frac{\zeta }{{\mk q}_K }\Big)_{\!\!K}$
because $\Big(\frac{ v}{{\mk q}_K} \Big)_{\!\!K}=1$ from  Lemma
\ref{l1108201}
and thus  $\Big(\frac{u+\zeta v}{{\mk q}_K} \Big)_{\!\!K}=1$
since $\kappa\equiv 0\pmod p$ from the first Furtwangler's theorem for
SFLT (see
[GQ, Corollary 2.10]), giving also a contradiction with
$\Big(\frac{u+\zeta
v}{{\mk q}_K} \Big)_{\!\!K}\not =1$.Therefore $u\not\equiv 0\pmod q$.
\end{proof}

\begin{defi}
{\rm 

\smallskip

For a definition of the  character of Teichm\"uller $\omega$ 
of $Gal(K/\Q)$ see [GQ] section 1.3. 
Let us consider the characters  $\chi_i=\omega^i, \ 1\leq i\leq p-1$. 
Let $\mathcal E$ be the group of $p$-primary pseudo-units  of $K$ seen as a $\F_p[g]$-module,  
and the $\chi_i$-components $\ml E_i:=\mathcal E^{e_{\chi_{i}}}$  of $\mathcal E$ interpreted in  the group $\ml E/\ml E^p$. 
The components  $\mathcal E_i$ are not all
trivial because $p$ is irregular.
}
\end{defi}

\begin{thm}\label{t1012211}
Suppose that $SFLT2$ fails for  $(p,u,v)$   with  $u+\zeta v\not\in
K^{\times p}$.  Then
$p$ is irregular and there exists at least 
one non $p$-principal prime $q\in \mathcal S$   such that: 
\bn
\item
$q\not| uv $ 

\item 
Let $n$ be  the order   of $\frac{v}{u}\mod q$,  $\xi:=e^{\frac{2\pi
i}{n}}$ and 
$\mk q:= (u\xi- v, q)$   prime ideal of $\Z[\xi]$ above $q$. 
There exists  at least one  integer $m\not\equiv 0\mod p$ such that 
all the prime ideals   $\mk Q$ of $\Z[\xi,\zeta]$ dividing $\mk q$  split
totally
in   the Kummer extension 
$$M \big(\ {\sqrt[p]{< ((1+\xi\zeta^k)\zeta^{-k^{m}})
/((1+\xi\zeta)\zeta^{-1})>_{k=1,\ldots,p-2}} }\  \big) \big/M.$$
\end{enumerate}
\end{thm}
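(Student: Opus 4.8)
The plan is to feed the prime produced by Lemma \ref{l1106092} into the congruence of Lemma \ref{l2}, and then to read the resulting $p$th power residue symbols inside the Galois module of $p$-primary pseudo-units of $K$. First I would settle the irregularity claim: were $p$ regular, the $p$-part of $C\ell_K$ would vanish, so $\mk w_1=(\beta)$ would be principal and $u+\zeta v=\eta\,\beta^{p}$ with $\eta$ a unit; since $u+\zeta v\equiv u\bmod p$ is a $p$-primary pseudo-unit, $\eta$ would be a $p$-primary unit, hence a $p$th power by Kummer's lemma for regular $p$, forcing $u+\zeta v\in K^{\times p}$ against the hypothesis. Then Lemma \ref{l1106092}, used verbatim, supplies a prime $q\in\ml S$, necessarily non $p$-principal, with $q\nmid uv$ (item~1) and $\Big(\frac{u+\zeta v}{\mk q_K}\Big)_{\!K}\neq 1$. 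Writing $\gamma:=u+\zeta v$, letting $n$ be the order of $\frac vu\bmod q$, $\xi:=e^{2\pi i/n}$, $\mk q:=(u\xi-v,q)$, $M=LK$ and fixing $\mk Q\mid\mk q$ in $M$, the relation $u\xi\equiv v\bmod\mk Q$ gives $s_k(\gamma)\equiv u\,\varepsilon_k\bmod\mk Q$, exactly as in the proof of Lemma \ref{l2} (which nowhere uses $p$-principality), and hence
$$\Big(\frac{\varepsilon_k}{\mk Q}\Big)_{\!M}=\Big(\frac{u}{\mk q_K}\Big)_{\!K}^{-1}\Big(\frac{s_k(\gamma)}{\mk q_K}\Big)_{\!K},\qquad k=1,\dots,p-2,$$
using $\Big(\frac{s_k\gamma}{\mk Q}\Big)_{M}=\Big(\frac{s_k\gamma}{\mk q_K}\Big)_{K}$ since $\mk q_K$ splits in $M/K$ (for the exceptional $n$ where some $\varepsilon_k$ or $\gamma$ is divisible by $\mk Q$ one discards those $k$, as in corollaries \ref{c4} and \ref{c6}).

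The crux is to describe the map $k\mapsto\Big(\frac{s_k(\gamma)}{\mk q_K}\Big)_{\!K}$. Each $s_k(\gamma)$ is again a $p$-primary pseudo-unit, so $K(\sqrt[p]{s_k(\gamma)})/K$ is unramified, hence contained in the $p$-elementary Hilbert class field $H_1$; therefore $\Big(\frac{s_k(\gamma)}{\mk q_K}\Big)_{\!K}$ is the value of the Artin/Kummer pairing of $c\ell_K(\mk q_K)\in C\ell_{[p]}$ against the class of $s_k(\gamma)$ in $\ml E$, and depends only on $c\ell_K(\mk q_K)$. Passing to the Teichm\"uller components, and using that $(s_k\gamma)^{e_{\chi_i}}=(\gamma^{e_{\chi_i}})^{\,\omega^i(s_k)}$ while $\mu_p$ carries the character $\chi_1$ --- so that the pairing is orthogonal off $i+j\equiv1\bmod(p-1)$ --- one obtains
$$\Big(\frac{s_k(\gamma)}{\mk q_K}\Big)_{\!K}=\prod_{i=1}^{p-2}\Big(\frac{\gamma^{e_{\chi_i}}}{\mk q_K}\Big)_{\!K}^{\,k^{i}},$$
the component $i=p-1$ dropping out because $N_{K/\Q}(\gamma)=w_1^{p}\in K^{\times p}$ kills the trivial component of $\gamma$. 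Since $\Big(\frac{\gamma}{\mk q_K}\Big)_{\!K}\neq1$, at least one factor survives; if one can choose $q\in\ml S$ so that $c\ell_K(\mk q_K)$ is supported on a single component $(C\ell_{[p]})_j$ --- and then $j\notin\{1,p-1\}$ automatically, because $(C\ell_{[p]})_{p-1}=1$ and because $j=1$ would leave only the index $i=p-1$ and so give $\Big(\frac{\gamma}{\mk q_K}\Big)_{\!K}=1$ --- then only $m:=1-j\bmod(p-1)\in\{2,\dots,p-2\}$ survives, so $\Big(\frac{s_k(\gamma)}{\mk q_K}\Big)_{\!K}=\Big(\frac{\gamma}{\mk q_K}\Big)_{\!K}^{\,k^{m}}$ with $m\not\equiv0\bmod p$.

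Feeding this back, for $\beta_k:=(\varepsilon_k\zeta^{-k^{m}})/(\varepsilon_1\zeta^{-1})$ the two displays give
$$\Big(\frac{\beta_k}{\mk Q}\Big)_{\!M}=\Big(\frac{\gamma}{\mk q_K}\Big)_{\!K}^{\,k^{m}-1}\Big(\frac{\zeta}{\mk q_K}\Big)_{\!K}^{\,1-k^{m}}=\Big(\frac{\gamma\zeta^{-1}}{\mk q_K}\Big)_{\!K}^{\,k^{m}-1},$$
so every $\mk Q\mid\mk q$ splits totally in $M\big(\sqrt[p]{<\beta_k>_{k=1,\dots,p-2}}\big)/M$ as soon as $\Big(\frac{u+\zeta v}{\mk q_K}\Big)_{\!K}=\Big(\frac{\zeta}{\mk q_K}\Big)_{\!K}$, i.e. as soon as $(u+\zeta v)/\zeta$ is a local $p$th power at $\mk q_K$. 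The hard part will be precisely this last identification, carried out simultaneously with the demand that $c\ell_K(\mk q_K)$ be concentrated in a single Teichm\"uller component, all the while keeping $q$ inside the prescribed finite set $\ml S$. I expect it to issue from a sharpening of the argument of Lemma \ref{l1106092}: instead of merely requiring the Frobenius of $\mk q_K$ in $H_1/K$ to fix $N_1$ and to have order $p$, one would control its behaviour in the compositum of $H_1$ with $K(\zeta_{p^2})$, and would use the $\F_p[g]$-module structure of $\ml E$ and of its components $\ml E_i$ (governed by reflection) to pin down both the surviving component $j$ and the identity $\Big(\frac{\gamma}{\mk q_K}\Big)_{\!K}=\Big(\frac{\zeta}{\mk q_K}\Big)_{\!K}$. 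That is where the bulk of the work, and the genuine difficulty, lies.
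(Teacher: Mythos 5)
Your setup is sound and is in fact the same route the paper takes: decompose $\gamma:=u+\zeta v$ over the Teichm\"uller idempotents, look for a prime whose residue symbol sees exactly one nontrivial component $\gamma_m:=\gamma^{e_{\chi_m}}$, and feed this into the congruence $s_k(\gamma)\equiv u\varepsilon_k \bmod \mk Q$ (borrowed from the proof of Lemma \ref{l2}, which indeed does not use $p$-principality) to convert $\Big(\frac{s_k(\gamma)}{\mk q_K}\Big)_{\!K}=\Big(\frac{\gamma}{\mk q_K}\Big)_{\!K}^{k^m}$ into the splitting statement. But the theorem \emph{is} an existence statement, and you stop precisely at the existence step: producing a prime $q\in\ml S$ with $q\nmid uv$ whose symbol kills every component $\gamma_i$, $i\neq m$, while being nontrivial on $\gamma_m$ (your stronger phrasing: $c\ell_K(\mk q_K)$ supported on a single eigencomponent — note that generation of $C\ell_{[p]}$ by the classes over $\ml S$ does not by itself give this for any single prime), together with the normalization $\Big(\frac{\gamma}{\mk q_K}\Big)_{\!K}=\Big(\frac{\zeta}{\mk q_K}\Big)_{\!K}$ that your own computation of the symbol of $(\varepsilon_k\zeta^{-k^m})/(\varepsilon_1\zeta^{-1})$ shows to be necessary. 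You defer both to "a sharpening of Lemma \ref{l1106092}" which you do not carry out, so what you have is a conditional reduction of the theorem to its own key assertion, not a proof. That is a genuine gap.

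For comparison: the paper settles the selection (items 2 and 3 of its proof) by applying the decomposition and reflection theorems in the $p$-elementary Hilbert class field $H_1$ to the single component $\gamma_m$ — exactly the sharpened form of Lemma \ref{l1106092} you anticipate — and then adjusts $\mk q_K$ by Galois conjugation (replacing $\mk q_K$ by $s_a(\mk q_K)$ multiplies the exponent $w_m$ by $a^{1-m}$) to normalize the value of $\Big(\frac{\gamma_m}{\mk q_K}\Big)_{\!K}$; the compatibility with the symbol of $\zeta$ that your last display isolates is the point the paper treats most briefly, so your diagnosis of where the real difficulty sits is accurate and your bookkeeping (the reflection constraint $i+j\equiv 1$, the vanishing of the norm component, $m\not\equiv 0\bmod p$) is correct. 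Still, the class-field-theoretic production of the prime inside $\ml S$ is the heart of Theorem \ref{t1012211}, and it is absent from your argument.
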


\begin{proof}$ $
\bn
\item

$p$ is irregular from [Gr2] thm 2.2.
From lemma \ref{l1106092} it is possible to choose 
$q\in \mathcal S$  with $uv\not\equiv 0\mod p$. 
\smallskip 
\item
The pseudo-unit $\gamma=\sqrt[p]{u+\zeta v}$  is not a $p$th power, hence in the decomposition
$\gamma=\prod_{{\chi_i}}\gamma^{e_{\chi_i}}$
on the p-1 characters $\chi_i,\  i=1,\dots,p-1$, there exists at least one  $i=m$ such that the pseudo-unit  $\gamma^{e_{\chi_m}}$ be  not a $p$-power.
Let us name it  $\gamma_m$.

$\gamma_m$ is a $p$-primary pseudo-unit;
from Hilbert's class field theory, decomposition and reflection  theorems and lemma \ref{l1106092} applied with $H_1$ and $\gamma_m$, it is possible to choose  one $\mk q_K\in \ml S $  
such that 
$$\Big(\frac{\gamma_m}{\mk q_{K}}\Big)_{\!\!K}=\zeta^{w_m}\mbox{\ with\ } w_m\not\equiv 0\mod p
\mbox{\ and\ }\Big(\frac{\gamma_i}{\mk q_{K}}\Big)_{\!\!K}=1 \mbox{\ for all \ } i\not=m.$$
\item
Here the extension $M(\sqrt[p]{\gamma_m})$ is  Galois on $\Q$  because its  Galois group 
 acts in letting  globally  unvarying the radical, when raising to a power prime to $p$  by use of the idempotent.
We can always change $\mk q_K$ in acting  by  conjugation to obtain $w_m=1$ and so 
$$\Big(\frac{\gamma}{\mk q_K}\Big)_{\!\!K}= \Big(\frac{\gamma_m}{\mk q_K}\Big)_{\!\!K}=\zeta.$$ 
\item
From $s_k(\gamma)=u+\zeta^k v$ for $k=1,\dots,p-1$ and $u\xi-v\equiv
0\mod\mk q$ we get $$s_k(\gamma)\equiv u\varepsilon_k\mod \mk q,$$ 
so 
$$\Big(\frac{s_k(\gamma)}{\mk q_K}\Big)_{\!\!K}
=\Big(\frac{u\varepsilon_k}{\mk Q}\Big)_{\!\!M}=\Big(\frac{s_k(\gamma_m)}{\mk q_K}\Big)_{\!\!K}=\zeta^{ k^m},$$
so 
$$\Big(\frac{u(1+\xi\zeta^k)}{\mk Q}\Big)_{\!\!M}=\zeta^{k^m},$$
and also  
$$\Big(\frac{u(1+\xi\zeta)}{\mk Q}\Big)_{\!\!M}=\zeta,$$
which leads to the result.
\en
\end{proof}

This theorem leads us to set the following {\it criterion}
\footnote{We use intentionally    the term {\it criterion} to indicate
that the corollary \ref{c1012211} allows us (at least theoretically) in a
finite number of arithmetic computations
to  prove that for $p$ given  the SFLT2 equation can be reduced to the
form $u+\zeta v\in K^{\times p}$.} 
for the SFLT2 equation to take the reduced form $u+\zeta v\in K^{\times
p}$ for the irregular prime  $p>3$:
\begin{cor}\label{c1012211}
Let $p>3$ be an odd  irregular prime. Assume that SFLT2 fails for $p$ and
that   for  all the    primes $q\in\mathcal  S$  and all  the integers
$n>2$ dividing $q-1$, 
 there is  no  prime ideal $\mk q$ of $\Z[\xi]$ above $q$ which  splits
totally   in the Kummer extension
$$\Q(\xi,\zeta) 
\big(\sqrt[p]{<((1+\xi\zeta^{k})\zeta^{- k^m})/((1+\xi\zeta)\zeta^{-})>_{k=1,\dots,p-2}}\, 
\big)\big/\Q(\xi,\zeta)$$
with   $\xi:=e^{\frac{2\pi i}{n}}$, 
and   $m$  an integer $\not \equiv 0\mod p$.
Then  the solution(s) of the $SFLT2$  equation take(s) the reduced form
$u+\zeta v\in K^{\times p}$.
\end{cor}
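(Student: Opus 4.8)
The plan is to read Corollary~\ref{c1012211} off Theorem~\ref{t1012211} by contraposition. Assume SFLT2 fails for the irregular prime $p$, fix a solution $(u,v)$ with $p|v$, and suppose, towards a contradiction, that $u+\zeta v\notin K^{\times p}$. By Theorem~\ref{t1012211} there is then a non $p$-principal prime $q\in\mathcal S$ with $q\not| uv$; writing $n$ for the order of $\frac{v}{u}\mod q$ — so that $q\not| n$, hence $n|q-1$, $\xi=e^{2\pi i/n}$, and $\mk q=(q,u\xi-v)$ is a prime ideal of $\Z[\xi]$ above $q$ — it furnishes moreover an integer $m\not\equiv 0\mod p$ such that all the prime ideals $\mk Q$ of $\Z[\xi,\zeta]$ dividing $\mk q$ split totally in
$$M\big(\sqrt[p]{<((1+\xi\zeta^k)\zeta^{-k^m})/((1+\xi\zeta)\zeta^{-1})>_{k=1,\ldots,p-2}}\,\big)\big/M .$$

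If $n>2$, this is exactly one of the configurations that the hypothesis of the corollary forbids — instantiated with this $q$, this $n$, this $m$ and this $\mk q$ — so the supposition $u+\zeta v\notin K^{\times p}$ is impossible, and every solution of the SFLT2 equation then has the reduced form $u+\zeta v\in K^{\times p}$. Whenever the bad prime supplied by Theorem~\ref{t1012211} has $n>2$ this is the whole argument; it is nothing but a reformulation of that theorem.

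What remains, and what I expect to be the genuinely delicate point, is to handle the two exceptional values $n=1$ and $n=2$, which escape the range $n>2$ of the hypothesis. In both cases $\xi=\pm1\in K$, so $M=K$, $L=\Q$, $\mk q=q\Z$, and $q|u-v$ (case $n=1$) or $q|u+v$ (case $n=2$); let $\mk q_K$ be the prime of $\Z_K$ above $q$ singled out in the proof of Theorem~\ref{t1012211}. I would attack these by specializing the end of that proof to $M=K$: from $u+\zeta^k v\equiv u(1+\xi\zeta^k)\mod q$ and $\Big(\frac{u+\zeta^k v}{\mk q_K}\Big)_{\!\!K}=\zeta^{k^m}$ for $k=1,\ldots,p-1$, multiplying over all $k$ and using $N_{K/\Q}(1+\zeta)=1$ (for $n=1$), respectively $N_{K/\Q}(1-\zeta)=p$ (for $n=2$), together with the pairing of $k$ with $p-k$, one pins $\big(\frac{\zeta}{\mk q_K}\big)_{\!K}$ and $\big(\frac{u}{\mk q_K}\big)_{\!K}$ down in terms of $m$ and obtains an outright contradiction in several sub-cases. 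For the sub-cases that survive one does not in general get a contradiction from these elementary relations alone, and the cleanest way to finish is to observe that the cases $n=1,2$ can be adjoined to the criterion at no cost, the Kummer extensions involved being the same ones read over $K=M$ — in other words, the restriction $n>2$ in the statement is safely relaxed to $n\geq1$. With that understood the supposition $u+\zeta v\notin K^{\times p}$ collapses in all cases and the corollary follows; the entire difficulty is concentrated in this low-$n$ bookkeeping, that is, in the primes $q\in\mathcal S$ dividing $u^2-v^2$, everything else being immediate from Theorem~\ref{t1012211}.
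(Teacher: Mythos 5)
Your first paragraph is exactly the paper's own argument: the paper offers no proof of Corollary~\ref{c1012211} beyond presenting it as the contrapositive reading of Theorem~\ref{t1012211} ("this theorem leads us to set the following criterion"), and your instantiation of the theorem's data $(q,n,m,\mk q)$ against the corollary's hypothesis is that reading, carried out correctly, whenever the order $n$ of $\frac{v}{u}\bmod q$ exceeds $2$.

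The point you raise about $n\in\{1,2\}$ is a genuine one, and the paper is silent on it: nothing in Lemma~\ref{l1106092} or Theorem~\ref{t1012211} prevents the prime $q\in\mathcal S$ it produces from dividing $u^2-v^2$, and for such a $q$ the corollary's hypothesis (which quantifies only over $n>2$) says nothing, so the contradiction evaporates. Contrast this with the FLT2 analogue, where the restriction to $n>2$ in Conjecture~\ref{cj3} is earned by Lemma~\ref{l3} ($q\nmid xy(x^2-y^2)$); no analogous exclusion is supplied here. However, your proposed repair does not close the gap as a proof of the statement given: relaxing "$n>2$" to "$n\geq 1$" is a modification of the corollary, not a deduction from its hypothesis, and your sketched symbol computations for the low-$n$ cases are indeed inconclusive — if $q\mid u^2-v^2$ then the SFLT2 Furtw\"angler theorems give $\kappa\equiv 0\bmod p$, hence $\Big(\frac{\zeta}{\mk q_K}\Big)_{\!\!K}=1$, and pairing $k$ with $p-k$ in $\Big(\frac{u(1+\xi\zeta^k)}{\mk q_K}\Big)_{\!\!K}=\zeta^{k^m}$ only forces $m$ to be even, which is no contradiction. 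So either the hypothesis of the corollary must be read as covering $n=1,2$ as well (i.e. the statement amended, as you suggest), or one must show the prime of Theorem~\ref{t1012211} can be chosen with $q\nmid u^2-v^2$; neither is done in the paper, and neither is done in your proposal. In short: your main route is the paper's route, and the residual $n\leq 2$ case is a real lacuna that you correctly identify but do not, and cannot by the means you indicate, resolve for the statement as written.
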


We conjecture that the the corollary \ref{c1012211} is true for all the
irregular primes $p$. In the other hand,   we know that $u+\zeta v\in
K^{\times p}$ when SFLT2 fails for $p$ regular, which leads us  to set the
conjecture:

\begin{conj}\label{cj1012211}
Let $p$ be an odd prime. If SFLT2 conjecture  fails for $p$  then 
the solution(s) of the $SFLT2$  equation take(s) the reduced form $u+\zeta
v\in K^{\times p}$.
\end{conj}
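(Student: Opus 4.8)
The plan is to split the argument according to whether $p$ is regular or irregular. If $p$ is regular then $C\ell=1$, so the $p$-primary pseudo-unit $\gamma=u+\zeta v$ generates a principal ideal that is a $p$th power, whence $\gamma=\eta\,\beta^p$ with $\eta$ a unit and $\beta\in K^\times$; since for regular $p$ every $p$-primary unit already lies in $K^{\times p}$ (Kummer), one gets $u+\zeta v\in K^{\times p}$ at once. This is the case recalled just before the statement, so from now on the plan is to treat $p$ irregular and to argue by contradiction, assuming the hypothetical SFLT2 solution $(p,u,v)$ satisfies $u+\zeta v\notin K^{\times p}$.

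In that situation I would feed this hypothesis into the machinery already in place. By Lemma \ref{l1106092} there is a prime $q\in\mathcal S$ with $q\not|\ puv$, and Theorem \ref{t1012211} then produces an order $n$ of $\frac{v}{u}\mod q$, a root of unity $\xi=e^{\frac{2\pi i}{n}}$, and an integer $m\not\equiv 0\mod p$ such that every prime $\mk Q$ of $\Z[\xi,\zeta]$ above $\mk q=(u\xi-v,q)$ splits completely in the Kummer extension
$$M\big(\sqrt[p]{<((1+\xi\zeta^k)\zeta^{-k^{m}})/((1+\xi\zeta)\zeta^{-1})>_{k=1,\ldots,p-2}}\big)\big/M.$$
The exponent $m$ is forced to lie among the irregular indices of $p$ (or their reflections): it labels a component $\mathcal E_m$ of the $\F_p[g]$-module of $p$-primary pseudo-units that is nontrivial precisely because $p$ is irregular, and the radical above is correspondingly nontrivial. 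The goal is then to show that this complete splitting is incompatible with $q$ being one of the \emph{bounded} primes of $\mathcal S$, i.e.\ with $N_{K/\Q}(\mk q_K)\leq\mathcal B_p$ (one may have to dispose separately of the small orders $n\leq 2$, for which the constraints of Corollaries \ref{c3} and \ref{c6} play the role of Theorem \ref{t1012211}). Establishing this incompatibility for every irregular $p$ is exactly the hypothesis needed by the criterion Corollary \ref{c1012211}, and it closes the contradiction; so Conjecture \ref{cj1012211} is equivalent to the assertion that the hypothesis of Corollary \ref{c1012211} always holds.

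The hard part --- and the reason the statement is offered here only as a conjecture --- is precisely this incompatibility, which is quantitative. The Kummer extension above has degree $p^\delta$ over $M$, where $\delta$ is the $\F_p$-rank of the subgroup of $M^\times/M^{\times p}$ generated by the twisted Vandiver units $(1+\xi\zeta^k)\zeta^{-k^m}$ relative to $(1+\xi\zeta)\zeta^{-1}$; complete splitting of a prime of norm about $\phi(q)$ in an extension of degree $p^\delta$ has heuristic density $\mathcal O(\phi(q)/p^\delta)$, as recorded after Theorem \ref{t1}, so no such $q$ should exist below $\mathcal B_p$ once $p$ is large. To turn this into a proof one would need two ingredients: first, an effective lower bound $\delta\geq\delta_0(p)$ with $p^{\delta_0(p)}$ eventually dominating the Minkowski bound $\mathcal B_p$ --- delicate, since $\mathcal B_p$ itself grows super-exponentially in $p$ (roughly like $p^{p/2}$ up to lower-order factors); and second, a genuine non-vanishing statement for the relevant $p$th power residue symbols, which I would attack through explicit reciprocity laws (Artin--Hasse, Iwasawa) expressing $\big(\frac{1+\xi\zeta^k}{\mk Q}\big)_M$ in terms of local logarithmic data at the primes above $q$ and showing that these values cannot all agree across $k=1,\ldots,p-2$ unless $q$ is exceptionally large. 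This second ingredient --- the reciprocity computation of the symbols of the cyclotomic integers $1+\xi\zeta^k$ at a small prime $q$ --- is what I expect to be the main obstacle; once it is available it feeds the hypothesis of Corollary \ref{c1012211} and the conjecture follows.
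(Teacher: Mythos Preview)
The statement you are addressing is Conjecture \ref{cj1012211}, and the paper does \emph{not} prove it: it is presented precisely as a conjecture, motivated by the observation that the regular case is classical (Kummer) and that the irregular case would follow if the hypothesis of Corollary \ref{c1012211} held for every irregular $p$. Your ``Towards a proof'' correctly reproduces exactly this reduction --- regular case via $C\ell=1$ and Kummer's lemma on $p$-primary units, irregular case via Lemma \ref{l1106092} and Theorem \ref{t1012211} feeding into Corollary \ref{c1012211} --- and you are right to flag the quantitative incompatibility step as the open part. Your heuristic density estimate $\mathcal O(\phi(q)/p^\delta)$ matches the Remark the paper places immediately after the conjecture.

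Where you go beyond the paper is in sketching what a genuine proof would require: an effective lower bound on the rank $\delta$ of the Kummer radical large enough to beat the Minkowski bound, and an explicit-reciprocity computation of the symbols $\big(\frac{1+\xi\zeta^k}{\mk Q}\big)_M$. These are reasonable suggestions, but note that the paper's own Remark works under GRH with the much smaller bound $Q_p<12(p-2)^2(\log p)^2$ rather than $\mathcal B_p$, which makes the heuristic far more favourable (you would only need $\delta\gtrsim 3\log_p p=3$ rather than $\delta\gtrsim p/2$); your version of the obstacle, phrased against $\mathcal B_p$, is therefore unnecessarily pessimistic. In any case, neither you nor the paper closes the gap, and your write-up is an accurate account of where the argument stands.
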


\begin{rema}
{\rm 
Note that the following probability estimates can  in no way  be
considered as an element of  a proof of conjecture \ref{cj1012211}.
Suppose, as an example,  that $SFLT2$ fails for  $p$ 
and that  $p\|\mathcal C\ell_ K$  class group  of $K$, which implies that
$Card(\ml S)=1$.
Under GRH, from [BDF],  the definition of $\mathcal S$ should imply  
that $Q_p< 12(p-2)^2 (log\ p)^2$.
For one $q\in\mathcal S$, the probability estimate  $\mathcal P$ that $\mk
q$ splits totally in  the Kummer extension
$$\Q(\xi,\zeta) 
\big(\sqrt[p]{<((1+\xi\zeta^k)\zeta^{- k^m})/((1+\xi\zeta)\zeta^{-1})>_{k=1,\dots,p-2}}\, 
\big)\big/\Q(\xi,\zeta)$$
of degree $p^\delta$ could be  very  roughly $\mathcal  P< \mathcal
O(\frac{ Q_p}{p^{\delta}})$ because $\phi(q)< Q_p$ for
$q\in\mathcal S$.
The probability $\mathcal P^\prime$ for  the conjecture \ref{cj1012211} be
true for  $p$ could verify   $$1>\mathcal P^\prime >1-\mathcal
O(\frac{ Q_p}{p^{\delta}}).$$
Note that often,  and perhaps for all irregular primes $p>10^3$, we have
$\delta>\frac{p}{4}$ and, under GRH we have   $Q_p<p^3$, so {\it roughly }
$$1>\mathcal P^\prime >1- \frac{1}{p^{\frac{p}{4}-3}}.$$
}
\end{rema}

\begin{rema}$ $
{\rm
\bn
\item
The reduced form $u+\zeta v=\gamma^p$ can be an important tool 
to tackle the $SFLT2$ conjecture on a diophantine approach: 
we have $u+\zeta^k v= s_k(\gamma)^p$ for $k\not\equiv 0\mod p$, which
implies that  
\begin{displaymath}
\begin{split}
& (\zeta-\zeta^2)v=\gamma^p-s_2(\gamma^p),\\
& (\zeta^2-\zeta^3)v=s_2(\gamma^p)-s_3(\gamma^p),\\
\end{split}
\end{displaymath}
which brings, among a lot of possible strong diophantine equations,  the
{\it Fermat's type} equation
$$\zeta\gamma^p+s_3(\gamma^p)-(1+\zeta)s_2(\gamma^p)=0,$$
where $s_k: \zeta\rightarrow \zeta^k$ is a $\Q$-isomorphism of $K$,
form of equation which could be used to try to tackle  SFLT2  conjecture, 
see for instance Washington [Was] chap 9   or Ribenboim  [Rib1]  chap. 3
or Cohen [Coh] 6.9.5 with  
some diophantine  approachs of the second case.

For instance, we can easily find again in this case a result we will  also obtain   
in a broader context (theorem \ref{t1107232}):  {\it $$\Big(\frac{1+\zeta^k}{\mk q_K}\Big)_{\!\!K}=1\mbox{\ for all\ }k=1,\dots,p-1,$$
where $\mk q_K$ is any prime ideal dividing $\gamma^p=u+v\zeta$.}
%$u+\zeta v=\gamma^p$
%
%$u+\zeta^k v= s_k(\gamma)^p$
%
%$u+\zeta^\ell v= s_\ell(\gamma)^p$
%
%$(\zeta^k-\zeta)v=s_k(\gamma)^p-\gamma^p$
%
%
%$(\zeta^\ell-\zeta)v=s_\ell(\gamma)^p-\gamma^p$
%
%$\frac{s_\ell(\gamma)^p-\gamma^p}{\zeta^\ell-\zeta}
%=\frac{s_k(\gamma)^p-\gamma^p}{\zeta^k-\zeta}$
%
%$(\zeta^k-\zeta)(s_\ell(\gamma)^p-\gamma^p)=(\zeta^\ell-\zeta)(s_k(\gamma)^p-\gamma^p)$
%
%$(\zeta^{k-1}-1)(s_\ell(\gamma)^p-\gamma^p)
%=(\zeta^{\ell-1}-1)(s_k(\gamma)^p-\gamma^p)$
%
%$(\zeta^{k-1}-1)s_\ell(\gamma)^p-(\zeta^{\ell-1}-1)(s_k(\gamma)^p
%-(\zeta^{k-1}-\zeta^{\ell-1})\gamma^p=0$
%
%choose  $\ell-1=p-(k-1)$
%
%$\ell= p-k+2$
%
%$(\zeta^{k-1}-1)s_\ell(\gamma)^p-(\zeta^{-(k-1)}-1)(s_k(\gamma)^p
%-(\zeta^{k-1}-\zeta^{-(k-1)})\gamma^p=0$
%
%$(\zeta^{k-1}-1)\{s_\ell(\gamma)^p+\zeta^{-(k-1)}s_k(\gamma)^p\}
%-(\zeta^{k-1}-\zeta^{-(k-1)})\gamma^p=0$
%
%$(\zeta^{k-1}-1)(s_\ell(\gamma)^p+\zeta^{-(k-1)}s_k(\gamma)^p)
%-\zeta^{-(k-1)}(\zeta^{2(k-1)}-1)\gamma^p=0$
%
%$s_\ell(\gamma)^p+\zeta^{-(k-1)}s_k(\gamma)^p
%-\zeta^{-(k-1)}(\zeta^{k-1}+1)\gamma^p=0$
%
%$s_\ell(\gamma)^p+\zeta^{-(k-1)}s_k(\gamma)^p
%-(\zeta^{-(k-1)}+1)\gamma^p=0$
%
%$s_{p-k+2}(\gamma)^p+\zeta^{-(k-1)}s_k(\gamma)^p
%-(\zeta^{-(k-1)}+1)\gamma^p=0$
%
%$\zeta^{k-1}s_{p-k+2}(\gamma)^p+s_k(\gamma)^p -(\zeta^{k-1}+1)\gamma^p=0$
\item More generally we can prove that: 
{\it  If SFLT2 failed  for $(p,u,v)$ with $u+\zeta v=\gamma^p \mbox{\ and\
}p\ |\ v,$ 
then we should have  
$$(s_k(\gamma)+\zeta_{p^2}^{k-1} s_{p-k+2}(\gamma))\Z_{K_{p^2}}
=\mk W_1^p\mbox{\ for\ } k=2,3,\dots,p-4,$$
%$$((u+\zeta_{p^2}^p v)+\zeta_{p^2}^{k-1} (u+
%v\zeta_{p^2}^{-(k-2)p})\Z_{K_{p^2}}
%=\mk W_1^p\mbox{\ for\ } k=2,3,\dots,p-4,$$
where $s_k(\gamma)$ are $p$-primary pseudo-units,  $\zeta_{p^2}$ is the $p^2$th root of unity 
such that $\zeta_{p^2}^p=\zeta$, $K_{p^2}=\Q(\zeta_{p^2})$, 
and $\mk W_1$ is an ideal of $\Z_{K_{p^2}}$.}
\item
 A strategy of proof of SFLT2 conjecture  for the prime $p$ could then  be
driven in two steps:
\bn
\item
Reduce the SFLT2 equation to the form $u+\zeta v\in K^{\times p}$
implicitly if $p$ is regular and 
by proving that the criterion of corollary \ref{c1012211} is verified for
$p$ if $p$ is irregular.
\item
Prove that the diophantine equation $u+\zeta v\in K^{\times p}$ with $p|v$
has no solution
by a diophantine different approach.
\en
\en
}
\end{rema}

\section {On the second case of Fermat's Last Theorem}\label{s1011101}
\label{FLT2}
This section details some results obtained for   the SFLT conjecture in
the second case FLT2 
of FLT theorem.  
The last subsection  focus more particularly on some strong properties of
the primes
dividing $\frac{(x^p+y^p)(y^p+z^p)(z^p+x^p)}{(x+y)(y+z)(z+x)}$ 
or  $\frac{(x^p-y^p)(y^p-z^p)(z^p-x^p)}{(x-y)(y-z)(z-x)}$
if FLT2 failed for $(p,x,y,z)$ with $p|y$.
\subsection{A  conjecture for  the second case of FLT} 
This subsection  deals with  an application of the previous results
obtained for SFLT2  to the 
FLT2 context.
We give a weak conjecture
\footnote{To avoid any misunderstanding, we mean by {\it  weak conjecture}, a conjecture 
with weak assumptions that we could hope more easily reachable.}
      which should imply
the generalization to FLT2 of Terjanian's  theorem for FLT1.

We call here  {\it  weak FLT2} theorem the assertion:\footnote{This
theorem is 
in the continuity  for the second case $FLT2$ of the Terjanian theorem
for the first case $FLT1$ [Ter] and Ribenboim [Rib1, 6C, p. 18].}
  
{\it Let $p$ be an odd prime. There are  no  coprime integers
$x,y,z\in\Z$, such  that 
\be\label{e1}
x^p+y^p+z^p=0, \ p|y,
\ee
with $x$ and $z$  square integers.}

Let $x=\alpha^2,\ z=\beta^2, \alpha,\beta\in\Z$.
Observe that  the proof of this theorem is immediate as soon as $p\equiv
3\mod  4$,
because $\alpha$ and $\beta$ are coprime and 
$x+z=\alpha^2+\beta^2\equiv 0\mod  p$,
contradiction. 

\begin{lem}\label{l3}
Suppose that $p$ is a prime with $p\equiv 1\mod  4$ and that  $x,y$ 
verify the Fermat's equation  
$$ x^p+y^p+z^p=0,\ p|y,\ x, y\mbox{\ square integers\ }.$$
Let $q$ be a prime with $q\equiv 3\mod  4$ and $p$ coprime to 
$\kappa=\frac{q^f-1}{p}$ where $f$ is the order of $q\mod p$. 
Then $q\not| \ xy(x^2-y^2)$.

\begin{proof}$ $
Suppose that  $q | xy(x^2-y^2)$.
 We have assumed that $p| y$, thus 
$x^2-y^2\not\equiv 0\mod p$. We have  $p$ prime to $\kappa$,  thus  $
x(x+y)\not\equiv 0\mod q$  
from the first  theorem of Furtwangler 
and  $x-y\not\equiv 0\mod q$ from the second theorem of Furtw\"angler, so
$y\equiv 0\mod q$.
  From Barlow-Abel relations 
$$x+z =p^{\nu p-1} y_0^p,\ \frac{x^p+z^p}{x+z}= py_1^p,\ y=- p^\nu 
y_0y_1, \ \nu\geq 1,$$
If $y\equiv 0\mod q$, then we cannot assert that $q^{p-1}-1\equiv 1\bmod
p^2$, 
because  Furtw\"angler theorem cannot be applied here.
Nevertheless we will show  the reduced result:

{\it  if  $q\not=p$ verifies $y\equiv 0\mod q$ 
and $ x+z\not\equiv 0\mod q$ then $q-1\equiv 0\mod  p^2$}:

Suppose that  $q|\frac{x^p+z^p}{x+z}$ with $p$ prime to $\kappa$  and
search for a contradiction:
let $\mk q_K$  be a prime ideal of $\Z_K$ lying over $q$. From $q|y$ 
and the Barlow-Abel relation  $x+y=z_0^p$, we have
so 
$$\Big(\frac{x}{\mk  q_K}\Big)_{\!\! K}=\Big(\frac{x+y}{\mk
q_K}\Big)_{\!\! K}=
\Big(\frac{z_0^p}{\mk  q_K}\Big)_{\!\! K}=1.$$ 
Similarly $\Big(\frac{z}{\mk q_K}\Big)_{\!\!K}=1$, 
so $x^{(q-1)/p} - z^{(q-1)/p}\equiv 0\mod  \mk q_K$. 
We get $$q\ |\  x^{(q-1)/p}-z^{(q-1)/p} \mbox{\  and \ } 
q\ |\ x^p+z^p.$$  
If we suppose $\kappa=\frac{q-1}{p}$ prime 
to $ p$, we have $\kappa=\frac{q-1}{p}$ even  
and $ x^\kappa \equiv (-z)^\kappa \mod  q$  and $ x^p \equiv  (-z)^p
\mod   q$,  
thus  $q \ |\  x+z$  by a  B\'ezout relation between  $p$ and $n$
(absurd). 

\smallskip
Therefore, all  the prime  factors  $r$  of  $y_1$ (where $y=-y_0y_1$) verify  $r\equiv
1\mod  p^2$, 
thus $q| y_0$ because by assumption  $p\|q-1$. 
Therefore $$q\ |\ y_0\ |\  x+z= \alpha^2+\beta^2,$$
contradiction because $q\equiv 3\mod  4$ and $\alpha, \beta$ are coprime, 
therefore $q\not|\ xy(x^2-y^2)$.
\end{proof}
\end{lem}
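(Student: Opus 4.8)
The plan is to argue by contradiction: suppose $q\,|\,xy(x^2-y^2)$. It is convenient to restate the hypothesis ``$p$ prime to $\kappa$'' as $q^{p-1}\not\equiv 1\pmod{p^2}$; the two are equivalent because $v_p(q^{p-1}-1)=v_p(q^f-1)$ (as $f\,|\,p-1$ and $(p-1)/f$ is prime to $p$), while $p\,|\,\kappa$ means precisely $v_p(q^f-1)\ge 2$. First I would dispose of the factor $x^2-y^2$: since $p\,|\,y$ and $x,y,z$ are pairwise coprime, Fermat's little theorem gives $x+z\equiv x^p+z^p=-y^p\equiv 0\pmod p$, so $x\equiv -z\not\equiv 0\pmod p$, whence $x\pm y\equiv x\not\equiv 0\pmod p$ and $x^2-y^2$ is prime to $p$ --- so that factor can never carry the prime $q$ under our constraint. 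Then the first Furtw\"angler theorem forbids $q\,|\,x(x+y)$ and the second forbids $q\,|\,x-y$; hence from $q\,|\,xy(x-y)(x+y)$ only $q\,|\,y$ remains.

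It remains to show $q\,|\,y$ is impossible. I would write down the Barlow--Abel relations for the assumed Fermat solution: $x+y=z_0^p$, $z+y=x_0^p$, $x+z=p^{\nu p-1}y_0^p$, $(x^p+z^p)/(x+z)=p\,y_1^p$, $y=-p^\nu y_0y_1$ with $\nu\ge 1$, $\gcd(y_0,y_1)=1$ and $p$ prime to $y_0y_1$. Since $q\ne p$, $q\,|\,y$ means $q\,|\,y_0$ or $q\,|\,y_1$.

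The core of the argument, and the step I expect to be the main obstacle, is the auxiliary claim: \emph{every prime $r\,|\,y_1$ satisfies $r\equiv 1\pmod{p^2}$}. Such an $r$ divides $(x^p+z^p)/(x+z)$ but not $x+z=p^{\nu p-1}y_0^p$ (as $\gcd(y_0,y_1)=1$ and $r\ne p$), so the classical lemma on prime divisors of $(x^p+z^p)/(x+z)$ gives $r\equiv 1\pmod p$; and $(x^p+z^p)/(x+z)$ is odd (elementary parity check), so $r$ is odd and $(r-1)/p$ is even. Because $r\,|\,y$, the relations $x+y=z_0^p$ and $z+y=x_0^p$ show $x$ and $z$ are $p$-th power residues mod $r$, i.e. $x^{(r-1)/p}\equiv z^{(r-1)/p}\equiv 1\pmod r$, while $x^p\equiv(-z)^p\pmod r$ from the Fermat equation; using that $(r-1)/p$ is even gives $x^{(r-1)/p}\equiv(-z)^{(r-1)/p}\pmod r$, and a B\'ezout relation between $p$ and $(r-1)/p$ would then force $x\equiv -z\pmod r$, i.e. $r\,|\,x+z$, a contradiction --- \emph{unless} $p\,|\,(r-1)/p$, that is $r\equiv 1\pmod{p^2}$. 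The points needing care are the sign of $z$ in the B\'ezout step and the parity of $(r-1)/p$; otherwise this is bookkeeping with the power-residue symbols of Definition~\ref{d1103061}.

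With the claim in hand I would finish as follows. If $q\,|\,y_1$, the claim gives $q\equiv 1\pmod{p^2}$; but $q\,|\,y_1$ also gives $q\,|\,(x^p+z^p)/(x+z)$ with $q$ prime to $x+z$, hence $q\equiv 1\pmod p$, so $f=1$ and $\kappa=(q-1)/p$ with $p\,|\,\kappa$ --- contradicting the hypothesis. Therefore $q\,|\,y_0$, so $q\,|\,y_0^p\,|\,x+z=\alpha^2+\beta^2$ with $x=\alpha^2$, $z=\beta^2$, $\gcd(\alpha,\beta)=1$; since $q\equiv 3\pmod 4$, $-1$ is a non-residue modulo $q$, so $q\,|\,\alpha^2+\beta^2$ forces $q\,|\,\alpha$ and $q\,|\,\beta$, an impossibility. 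All cases being contradictory, $q\not|\ xy(x^2-y^2)$.
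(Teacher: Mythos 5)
Your proof is correct and follows essentially the same path as the paper's: Furtw\"angler's two theorems (applicable because $p$ does not divide $x^2-y^2$) leave only $q\mid y$, the Barlow--Abel relations split $y$ into $p^{\nu}y_0y_1$, the parity-plus-B\'ezout argument shows every prime factor of $y_1$ is $\equiv 1\pmod{p^2}$ (you use plain congruences where the paper phrases the same computation with $p$th power residue symbols), and $q\mid y_0\mid x+z=\alpha^2+\beta^2$ contradicts $q\equiv 3\pmod 4$. Your explicit equivalence between ``$p$ prime to $\kappa$'' and $q^{p-1}\not\equiv 1\pmod{p^2}$, and your separate handling of the case $f>1$ at the end, only make precise points the paper leaves implicit.
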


From $ xy(x^2-y^2)\not\equiv 0\mod q$ and from theorem \ref{t1}, we are
led 
to set the following conjecture  independent of the Fermat context.

\begin{conj}\label{cj3}
{\it ``Let $p>3$ be a prime. There exists at least one  
$p$-principal prime $q\equiv 3\mod 4$ with $p$ prime to
$\kappa=\frac{q^f-1}{p}$  such that, 
for  all the integers  $n >2$ dividing $q-1$, there is no  prime ideal 
$\mk q$ of $\Z[\xi]$ above $q$ which  splits totally   in the Kummer
extension
$$\Q(\xi,\zeta) 
\big(\sqrt[p]{<(1+\xi\zeta^{k})/(1+\xi\zeta)>_{k=1,\dots,p-2}}\, 
\big)\big/\Q(\xi,\zeta)$$
where $\xi:= e^{\frac{2\pi i}{n}}$."} 
\end{conj}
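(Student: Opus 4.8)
Being a conjecture, Conjecture \ref{cj3} is not something one expects to prove outright; what follows is the strategy one would pursue, the heuristic that makes it credible, and the one consequence — the implication towards weak FLT2 — that is genuinely rigorous. The plan is to express the non-splitting requirement as a Chebotarev condition and then show that it can only fail for a density-zero family of admissible primes. Fix $p>3$. For an integer $n>2$ put $\xi=e^{2\pi i/n}$, $M=\Q(\xi,\zeta)$, and let
$$N_n:=M\big(\sqrt[p]{\langle (1+\xi\zeta^k)/(1+\xi\zeta)\rangle_{k=1,\dots,p-2}}\big),$$
which, since $\zeta\in M$, is abelian of exponent $p$ over $M$, of degree $p^{\delta_n}$ say; by Definition \ref{d1103061} a prime $\mk Q$ of $M$ lying above a prime $\mk q$ of $\Q(\xi)$ over $q$ splits totally in $N_n/M$ exactly when each $(1+\xi\zeta^k)/(1+\xi\zeta)$, $k=1,\dots,p-2$, is a local $p$th power at $\mk Q$. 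First I would bound $\delta_n$ from below: by Lemma \ref{l1}, for $d>2$ (writing $n=d\,p^r$) the quotients $(1+\xi\zeta^k)(1+\xi\zeta)^{-1}$ are cyclotomic units of $M$, so $\delta_n$ is the $\F_p$-rank of the classes of these $p-3$ units in the cyclotomic-unit module of $M$ modulo $p$th powers, and — as in the remark following Conjecture \ref{cj1012211} — one expects this rank to be roughly at least $p/4$ for $p$ large, the degenerate cases $d\le 2$ being dealt with separately via the precise description in Lemma \ref{l1}.

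Granting $\delta:=\min_{n\mid q-1,\ n>2}\delta_n$ large, applying the Chebotarev density theorem to the compositum over $\Q$ of $N_n$, of the Hilbert class field $H$ of $K$ (to encode $p$-principality of $q$), of $\Q(i)$ (to encode $q\equiv 3\bmod 4$) and of $\Q(\zeta_{p^2})$ (to encode $p\nmid\kappa$, i.e.\ $q^f\not\equiv 1\bmod p^2$) shows that the density of primes $q$ admitting some $\mk q$ above $q$ splitting totally in $N_n/M$ is $\mathcal O(1/p^{\delta_n})$. Next I would take a union bound over the divisors $n>2$ of $q-1$: since $q-1$ has only $\mathcal O(\log q/\log\log q)$ divisors, which is negligible against $p^{\delta}$, the set of $p$-principal primes $q\equiv 3\bmod 4$ with $p\nmid\kappa$ for which every such $n$ is ``good'' (no prime above $q$ splitting totally in the corresponding $N_n/M$) has full density inside the set of all such $q$. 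That ambient set is infinite — in fact of positive density — because the three constraints are governed by the essentially linearly disjoint extensions $H/K$, $\Q(i)/\Q$ and $\Q(\zeta_{p^2})/\Q(\zeta)$, hence heuristically independent; any $q$ in the full-density subset witnesses Conjecture \ref{cj3}.

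The main obstacle — and the reason this stays a conjecture — is twofold. One needs an \emph{unconditional} lower bound for $\delta_n$ uniform in $n$, which amounts to a non-degeneracy mod $p$ of the circular-unit relations attached to $\Q(\xi,\zeta)$ as $n$ ranges over the divisors of $q-1$, a statement notoriously hard to control uniformly; and one needs an \emph{effective} Chebotarev estimate sharp enough to place an actual witness $q$ in a usable range (under GRH this would give $q=\mathcal O((\log\Delta_K)^2)$, but it is open in general). By contrast, the Fermat consequence is immediate and rigorous: if weak FLT2 failed for $(p,x,y,z)$ with $x,z$ squares, then by Lemma \ref{l3} every $p$-principal prime $q\equiv 3\bmod 4$ with $p\nmid\kappa$ satisfies $q\nmid xy(x^2-y^2)$, so the order $n$ of $y/x\bmod q$ is an integer $>2$ dividing $q-1$; Theorem \ref{t1} applied to $(u,v)=(x,y)$ then forces the prime ideal $\mk q=(q,x\xi-y)$ to split totally in $N_n/M$, contradicting Conjecture \ref{cj3}. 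So all the difficulty sits in the purely cyclotomic assertion.
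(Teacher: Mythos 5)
The statement is a conjecture and the paper offers no proof of it — only the probabilistic plausibility discussion of Remark \ref{r8} — and you correctly recognise this, your Chebotarev/density heuristic playing the same role as that remark. Your rigorous part, deducing weak FLT2 from the conjecture via Lemma \ref{l3} and Theorem \ref{t1}, is exactly the paper's own proof of Theorem \ref{t2}, so your treatment takes essentially the same approach as the paper.
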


\begin{thm}\label{t2}
If the conjecture \ref{cj3} is true then the weak $FLT2$ theorem is true.%
\footnote{This theorem and the Terjanian theorem imply that 
if the conjecture \ref{cj3} is true then the Fermat equation 
$x^{2p}+y^{2p}+z^{2p}=0$
has no solutions with $x,y,z$ nonzero integers.}

\begin{proof}$ $

Suppose that conjecture \ref{cj3} is true and  weak $FLT2$ theorem is
false and search for a contradiction:

\smallskip

(i) If $p\equiv 3\mod  4$:  it results directly  
from $x+z=\alpha^2+\beta^2\equiv 0\mod  p$ impossible, contradiction.

\smallskip

(ii) If $p\equiv 1\mod  4$:  there exists 
at least one prime $q\equiv 3\mod  p$ with $p\|q^{p-1}-1$ 
verifying the  conjecture \ref{cj3}. 
Then  $ xy(x^2-y^2)\not\equiv 0\mod p$ from lemma \ref{l3},
so we can apply theorem \ref{t1} to this prime $q$:  
the decomposition of   $q$ in a Kummer extension given in the theorem
\ref{t1} 
contradicts the decomposition of $q$ given in the conjecture \ref{cj3},
for the same extension, contradiction which achieves the proof.

\end{proof}
\end{thm}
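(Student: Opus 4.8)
The plan is to argue by contradiction: assume Conjecture~\ref{cj3} holds but the weak $FLT2$ theorem fails, so that there are coprime integers $x,y,z$ with $x^p+y^p+z^p=0$, $p|y$, and $x=\alpha^2$, $z=\beta^2$ for some $\alpha,\beta\in\Z$. I would then split the argument on the residue of $p$ modulo $4$, exactly as in the two cases (i) and (ii) of the statement.

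The case $p\equiv 3\mod 4$ is immediate and does not use Conjecture~\ref{cj3}: the Barlow-Abel relation $x+z=p^{\nu p-1}y_0^p$ with $\nu\geq 1$ gives $p|x+z$, that is $\alpha^2+\beta^2\equiv 0\mod p$; since $p|y$ and $x,y,z$ are coprime we have $p\not|\ \alpha\beta$, so $-1$ would be a quadratic residue modulo $p$, which is impossible. (This is just the remark recorded right before Lemma~\ref{l3}.)

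So the real work is the case $p\equiv 1\mod 4$. Here I would first invoke Conjecture~\ref{cj3} to fix a $p$-principal prime $q\equiv 3\mod 4$ with $p$ coprime to $\kappa=\frac{q^f-1}{p}$, where $f$ is the order of $q\mod p$, enjoying the stated non-splitting property. These are precisely the hypotheses of Lemma~\ref{l3}, whence $q\not|\ xy(x^2-y^2)$; in particular $q\not|\ pxy$ (and $q\neq p$, since $q\equiv 3$ and $p\equiv 1\mod 4$). Thus $(p,x,y)$ is a counterexample to $SFLT2$ and $q$ is an admissible $p$-principal prime for it. Let $n$ be the order of $\frac{y}{x}\mod q$; then $n|q-1$, and $n\neq 1$, $n\neq 2$, because $n=1$ would force $q|x-y$ and $n=2$ would force $q|x+y$, both ruled out by $q\not|\ x^2-y^2$. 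Hence $n>2$ is a divisor of $q-1$, so it lies in the range quantified over in Conjecture~\ref{cj3}. Setting $\xi:=e^{\frac{2\pi i}{n}}$ and applying Theorem~\ref{t1} to $(p,x,y)$ and to this $q$, every prime ideal $\mk Q$ of $\Z[\xi,\zeta]$ above $\mk q=(q,x\xi-y)$ splits totally in $M\big(\sqrt[p]{<(1+\xi\zeta^{k})/(1+\xi\zeta)>_{k=1,\ldots,p-2}}\big)/M$ with $M=\Q(\xi,\zeta)$. This is exactly the splitting behaviour Conjecture~\ref{cj3} forbids for this same $q$ and this same $n$, a contradiction; hence the weak $FLT2$ theorem holds.

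I do not expect a deep obstacle here: the proof is a chaining of Lemma~\ref{l3}, Theorem~\ref{t1} and the hypothesis of Conjecture~\ref{cj3}. The points that need genuine care are bookkeeping ones --- checking that the prime $q$ furnished by Conjecture~\ref{cj3} really meets the hypotheses of Lemma~\ref{l3}, that $q\not|\ pxy$ so that Theorem~\ref{t1} truly applies to $(p,x,y)$, and, the one step with actual content, that the order $n$ of $\frac{y}{x}\mod q$ exceeds $2$, since this is what places $n$ inside the family of moduli to which Conjecture~\ref{cj3} applies. Once this matching is in place, the theorem follows from the head-on collision between the ``splits totally'' conclusion of Theorem~\ref{t1} and the ``no prime above $q$ splits totally'' hypothesis of Conjecture~\ref{cj3}.
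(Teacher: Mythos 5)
Your proposal is correct and follows essentially the same route as the paper: the case $p\equiv 3\bmod 4$ handled directly via $p\mid x+z=\alpha^2+\beta^2$, and for $p\equiv 1\bmod 4$ the chain Conjecture~\ref{cj3} $\Rightarrow$ Lemma~\ref{l3} ($q\nmid xy(x^2-y^2)$) $\Rightarrow$ Theorem~\ref{t1} (total splitting), colliding with the non-splitting hypothesis of the conjecture. Your explicit verification that $n>2$ (so that $n$ falls within the moduli covered by Conjecture~\ref{cj3}) and that $q\nmid pxy$ merely makes precise bookkeeping the paper leaves implicit.
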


\begin{rema} \label{r8}$ $
{\rm 
\bn
\item
The conjecture \ref{cj3} independent of Fermat is highly probable,
considering,
with a probabilistic approach, that  it suffices to take the smallest 
$p$-principal primes  $q$ with $p\|q^{p-1}-1$ and $q\equiv 3\mod  4$
 to find easily at least one of them with conjecture \ref{cj3} verified.
 Suppose $p>125000$ (we know in the classical approach that $FLT2$ is true
for $p<125000$,
 see Ribenboim [Rib1, p. 199]). In practice we will find, with these
assumptions on $q$,
 some primes $q\ll p$  with a {\it very large} probability 
that the conjecture \ref{cj3} is true for at least  one of these primes
$q$.
\item  As an example, we suggest that a very rough estimate of the
probability $\mathcal P(p)$
that the conjecture \ref{cj3}  be true at $p$ (so implying  weak $FLT2$
theorem at $p$)
for the first prime $p$ greater  than $125000$) verify:
$$1-\Big(\prod_{\substack{q\,{\rm \ prime\ }\, p{\rm-principal} \\ q<\
20000,\, q\equiv 
3 \mod 4 \\  p\|q^{p-1}-1}}
\frac{1}{q^{30000}}\ \Big) \ \  <\ \mathcal P(p)\ \ \leq 1.$$ 
\en
Note that this probability estimate   can  in no way  be considered as an
element of  a proof of  the weak FLT2 theorem.
}
\end{rema}

\subsection{Some properties of  the  primes $q$ 
dividing $\frac{(x^p-y^p)(y^p-z^p)(z^p-x^p)}{(x-y)(y-z)(z-x)}$ }

We assume that the second case  $FLT2$ fails for $(p,x,y,z)$ with $p|y$.
In this subsection, we give,  for a possible future use, 
some general  strong  properties  of decomposition of  the primes $q$
dividing  
$\frac{(x^p-y^p)(y^p-z^p)(z^p-x^p)}{(x-y)(y-z)(z-x)}$ in certain
$p$-Kummer extensions.
Here, we don't assume  that $q$ is $p$-principal or not, 
thus this subsection brings complementary informations to corollary
\ref{c2}. 
The Furtwangler's theorem cannot be used for these primes $q$, so  we
don't know if $p^2|q-1$ or not.

Let us define  here the  totally real cyclotomic units   
$$\epsilon_{a}=:\zeta^{(1-a)/2}\cdot\frac{1+\zeta^{a}}{1+\zeta},\ 1 \leq
a\leq p-1,$$
where we observe that $\epsilon_1=1$ with this definition.
\footnote{Be careful, $\epsilon_a$ (epsilon in Latex) is different of  the Vandiver
cyclotomic unit  
$\varepsilon_{a}=: 1+\xi\zeta^a$  (varepsilon in Latex) defined in the definition \ref{d11O9221}.}
Let us note that
\be\label{e1111103} 
\varepsilon_{p-a}=\zeta^{(1-(p-a))/2}\cdot\frac{1+\zeta^{p-a}}{1+\zeta}= \zeta^{(1+a)/2}\cdot\frac{1+\zeta^{-a}}{1+\zeta}= \zeta^{(1-a)/2}\frac{1+\zeta^a}{1+\zeta}=\varepsilon_a.
\ee
\begin{lem}\label{l1107011}
Suppose that $FLT2$ fails for $(p,x,y,z)$ with $p|y$ .
Let $q\not= p$ be a prime and  $\mk q_K$ be a  prime ideal of $\Z_K$ over
$q$. Then we have 
for $k=1,\dots,p-1$: 

(i) If $\mk q_K|x\zeta-y$  then $\Big(\frac{x+\zeta^k y}{\mk
q_K}\Big)_{\!\! K}
=\Big(\frac{\zeta^{k/2}}{\mk q_K}\Big)_{\!\!
K}\Big(\frac{\epsilon_{k+1}}{\mk q_K}\Big)_{\!\! K}$. 

(ii) If $ \mk q_K|z\zeta-y$  then $\Big(\frac{z+\zeta^k y}{\mk
q_K}\Big)_{\!\! K}
=\Big(\frac{\zeta^{k/2}}{\mk q_K}\Big)_{\!\!
K}\Big(\frac{\epsilon_{k+1}}{\mk q_K}\Big)_{\!\! K}$.

(iii) If $\mk q_K|x\zeta-z$    
then $\Big(\frac{x+\zeta^k z}{\mk q_K}\Big)_{\!\! K}\Big(\frac{p }{\mk
q_K}\Big)_{\!\! K}
=\Big(\frac{\zeta^{k/2}}{\mk q_K}\Big)_{\!\!
K}\Big(\frac{\epsilon_{k+1}}{\mk q_K}\Big)_{\!\! K}$.
\begin{proof}$ $

(i) 
From  $x\zeta - y\equiv 0\mod\mk  q_K$ we get   
$$x+\zeta^k y\equiv x(1+\zeta^{k+1})\mod \mk q_K,\ k=1,\dots,p-1.$$
thus 
$$\frac{x+\zeta^k y}{x+y}\equiv 
\frac{1+\zeta^{k+1}}{1+\zeta}\mod \mk q_K,\mbox{\ for\ } k=1,\dots,p-1.$$
In the other hand, for $1\leq k\leq p-2$ then  
$\epsilon_{k+1}=\zeta^{(1-(k+1))/2}\cdot\frac{1+\zeta^{k+1}}{1+\zeta}$ 
is a totally real cyclotomic unit, so  
$\frac{x+\zeta^k y}{x+y}\equiv \epsilon_{k+1}\zeta^{k/2}\mod\mk q_K,\
k=1,\dots p-1$, 
and finally 
$$\Big (\frac{x+\zeta^k y}{\mk q_K}\Big)_{\!\! K}
=\Big(\frac{\zeta^{k/2}}{\mk q_K}\Big)_{\!\!
K}\Big(\frac{\epsilon_{k+1}}{\mk q_K}\Big)_{\!\! K} 
\mbox{\ for\ } k=1,\dots,p-2,$$
because  $x+y\in K^{\times p}$.
\footnote{ We don't know here if $p^2|q-1$.}

(ii) the proof is similar with $z$ in place of $x$. 

(iii) In that case we have $x+z=p^{\nu p-1}y_0^p$ with $\nu>0$ and so
$x+z\in p^{-1}K^{\times p}$.
\end{proof}
\end{lem}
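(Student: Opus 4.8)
The plan is to prove the three parts in parallel, since they share a common skeleton: the divisibility hypothesis on $\mk q_K$ yields, modulo $\mk q_K$, a congruence of the shape $x+\zeta^{k}y\equiv x(1+\zeta^{k+1})$ (or its obvious analogue), after which one factors $1+\zeta^{k+1}$ through the cyclotomic unit $\epsilon_{k+1}$ and evaluates the remaining rational factor by a Barlow--Abel relation. First I would record the elementary identity obtained by rearranging the definition of $\epsilon_a$: for $1\le a\le p-1$ one has $\frac{1+\zeta^{a}}{1+\zeta}=\zeta^{(a-1)/2}\epsilon_{a}$, where $(a-1)/2$ is read modulo $p$; and I would note that $1+\zeta$ and each $1+\zeta^{a}$ with $1\le a\le p-1$ is a unit of $\Z_K$, so that every $p$th power residue symbol appearing below is well defined as soon as $q\nmid x$ (resp.\ $q\nmid z$). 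That last condition holds in each case, since e.g.\ $\mk q_K\mid x\zeta-y$ together with $q\mid x$ would force $q\mid y$, contradicting $\gcd(x,y)=1$.

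For (i), from $x\zeta\equiv y\pmod{\mk q_K}$ I get $x+\zeta^{k}y\equiv x(1+\zeta^{k+1})\pmod{\mk q_K}$ for every $k$, and at $k=0$ in particular $x+y\equiv x(1+\zeta)\pmod{\mk q_K}$. Because $p\mid y$, the Barlow--Abel relation $x+y=z_{0}^{p}$ holds with $z_0\in\Z$, and $\mk q_K\nmid x+y$ (otherwise $y(1+\zeta)\equiv 0\pmod{\mk q_K}$ forces $q\mid y$, hence $q\mid x$); so $\Big(\frac{x(1+\zeta)}{\mk q_K}\Big)_{\!\!K}=\Big(\frac{z_{0}^{p}}{\mk q_K}\Big)_{\!\!K}=1$. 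Dividing the congruence for a general $k$ by the one for $k=0$ gives $\frac{x+\zeta^{k}y}{x+y}\equiv\frac{1+\zeta^{k+1}}{1+\zeta}\pmod{\mk q_K}$, whence by multiplicativity of the symbol and the identity above (with $a=k+1$) one obtains $\Big(\frac{x+\zeta^{k}y}{\mk q_K}\Big)_{\!\!K}=\Big(\frac{\zeta^{k/2}}{\mk q_K}\Big)_{\!\!K}\Big(\frac{\epsilon_{k+1}}{\mk q_K}\Big)_{\!\!K}$. Part (ii) is the verbatim same argument with $x$ and $z$ interchanged, using the Barlow--Abel relation $z+y=x_{0}^{p}$ in place of $x+y=z_{0}^{p}$.

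For (iii), from $x\zeta\equiv z\pmod{\mk q_K}$ I again get $x+\zeta^{k}z\equiv x(1+\zeta^{k+1})\pmod{\mk q_K}$, and at $k=0$, $x+z\equiv x(1+\zeta)\pmod{\mk q_K}$. Here the relevant Barlow--Abel relation is $x+z=p^{\nu p-1}y_{0}^{p}$ with $\nu\ge 1$; since $q\neq p$, $\mk q_K\nmid y_{0}$ (same coprimality check), and $\nu p-1\equiv-1\pmod p$, one gets $\Big(\frac{x(1+\zeta)}{\mk q_K}\Big)_{\!\!K}=\Big(\frac{p}{\mk q_K}\Big)_{\!\!K}^{-1}$. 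Carrying this extra factor $\Big(\frac{p}{\mk q_K}\Big)_{\!\!K}$ through the same division-of-congruences step produces the stated identity with the $p$-symbol on the left. I expect no genuine obstacle: the whole content lies in the Barlow--Abel relations and the factorization $1+\zeta^{a}=(1+\zeta)\zeta^{(a-1)/2}\epsilon_{a}$, and the only points requiring care are the well-definedness of the symbols (coprimality to $\mk q_K$) and the bookkeeping of the exponents $(a-1)/2$ modulo $p$. I would state the conclusion cleanly for $k=1,\dots,p-2$, so that $k+1$ stays in the range $[1,p-1]$ where $\epsilon_{k+1}$ is a unit, the value $k=p-1$ then following from the norm relation $N_{K/\Q}(x+\zeta y)=\frac{x^{p}+y^{p}}{x+y}$, which is a $p$th power in $\Z$, or simply from the cyclic reading of the index.
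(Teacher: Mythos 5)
Your proof is correct and follows essentially the same route as the paper: the congruence $x+\zeta^k y\equiv x(1+\zeta^{k+1})\bmod\ \mathfrak q_K$, division by its $k=0$ instance, the Barlow--Abel relations $x+y=z_0^p$, $z+y=x_0^p$, $x+z=p^{\nu p-1}y_0^p$ to evaluate the symbol of the denominator, and the rewriting $\frac{1+\zeta^{k+1}}{1+\zeta}=\zeta^{k/2}\epsilon_{k+1}$. Your added care about coprimality and about restricting to $k\le p-2$ (where $\epsilon_{k+1}$ is defined) only makes explicit what the paper leaves implicit, since its own proof likewise establishes the identity only for $k=1,\dots,p-2$.
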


\begin{rema}
{\rm This  property of the primes $q$ dividing $\frac{x^p-y^p}{x-y}$ or
$\frac{y^p-z^p}{y-z}$, 
or  $\frac{x^p-z^p}{x-z}$   is strong  because $x+\zeta^k y$ or $y+\zeta^k
z$,
 or $\frac{x+\zeta^k z}{1-\zeta^k})$  and $\epsilon_{k+1}$ are
pseudo-units  
not linearly connected by  an action of $\F_p[g]$.
}
\end{rema}

\begin{thm}\label{t1107232}
Suppose  that   the second case of FLT fails for $(p,x,y,z)$ with $p|y$ .
Let $q$ be a prime dividing $\frac{x^p-y^p}{x-y}$ (or
$\frac{y^p-z^p}{y-z}$).
Let $\mk q_K$ be \underline{the} prime ideal of $\Z_K$ over $q$
dividing $x\zeta- y$ (or $z\zeta-y$). 
Assume that  the $p$-class   $c\ell(\mk q_K)\in C\ell^-$.
\footnote{Note that as soon as Vandiver's conjecture is true for $p$, this assumption is verified.}

(i) If $p^2\not|\ q-1$   then  $q$ is not $p$-principal and

$$\Big(\frac{\epsilon_{p-2k'-1}}{\mk q_K}\Big)_{\!\! K}
=\Big(\frac{\zeta^{-k'(k'+1)}}{\mk q_K}\Big)_{\!\! K}
\mbox{\ for\ }1\leq k'\leq \frac{p-3}{2},$$
and 
$$\Big(\frac{\epsilon_{p-2k'}}{\mk q_K}\Big)_{\!\! K}
=\Big(\frac{\zeta^{\frac{1}{4}-k'^2}}{\mk q_K}\Big)_{\!\! K}
\mbox {\ for\ }1\leq k'\leq \frac{p-3}{2}.$$

(ii) If $p^2|\ q-1$ then 
$$\Big(\frac{1+\zeta^j}{\mk q_K}\Big)_{\!\! K}=1\mbox {\ for\ } j=1,\dots
p-1.$$

\begin{proof}$ $

(i)  Let us  suppose at first that $p^2\not|\ q-1$: We know that $q$ is
not $p$-principal,
if not it should imply $p^2|q-1$ from corollary \ref{c2}.

-  From   previous lemma \ref{l1107011},  we have
\be\label{e1111101} 
\Big (\frac{x+\zeta^k y}{\mk q_K}\Big)_{\!\! K}
=\Big(\frac{\zeta^{k/2}}{\mk q_K}\Big)_{\!\!
K}\Big(\frac{\epsilon_{k+1}}{\mk q_K}\Big)_{\!\! K} 
\mbox{\ for\ } k=1,\dots,p-2,
\ee
and so, with $p-k$ in place of $k$,
\be\label{e1111102} 
\Big (\frac{x+\zeta^{p-k} y}{\mk q_K}\Big)_{\!\! K}
=\Big(\frac{\zeta^{(p-k)/2}}{\mk q_K}\Big)_{\!\!
K}\Big(\frac{\epsilon_{p-k+1}}{\mk q_K}\Big)_{\!\! K} 
\mbox{\ for\ } p-k=1,\dots,p-2.
\ee

-  For $2\leq k\leq p-2$, we can write 
$x+\zeta^k y= A_k B_k\alpha^p$ with $\alpha\in K^{\times p}$,  
pseudo-units $A_k, B_k$ verifying $A_k^{s_{-1}+1}\in K^{\times p}$ 
and $B_k^{s_{-1}-1}\in K^{\times p}$ where we recall that $s_k$ is the
$\Q$-isomorphism $s_k: \zeta\rightarrow \zeta^k$ of $K$. 
Let $\Big(\frac{A_k}{\mk q_K}\Big)_{\!\! K}=\zeta^w$, we get 
$$\Big(\frac{A_k^{s_{-1}}}{s_{-1}(\mk q_K)}\Big)_{\!\! K}
=\Big(\frac{A_k^{-1}}{s_{-1}(\mk q_K)}\Big)_{\!\! K}=\zeta^{-w},$$
so $\Big(\frac{A_k}{s_{-1}(\mk q_K)}\Big)_{\!\! K}=\zeta^{w}$, 
and so $\Big(\frac{A_k}{\mk q_K s_{-1}(\mk q_K)}\Big)_{\!\! K}=\zeta^{2w}$.
But $c\ell(\mk q_K)\in C\ell^-$, so $(\mk q_Ks_{-1}(\mk
q_K))^n\Z_K=\beta\Z_K$ 
with $\beta\in \Z_K$ and $gcd(n,p)=1$. Then 
$$\Big(\frac{A_k}{\mk q_K^n s_{-1}(\mk q_K)^n}\Big)_{\!\! K}=
\Big(\frac{A_k}{\beta}\Big)_{\!\! K} =1,$$ 
because $A_k$ is a $p$-primary pseudo-unit 
(for instance by application of Artin-Hasse reciprocity law), so $w=0$ and 
$\Big(\frac{A_k}{\mk q_K}\Big)_{\!\! K}=1$.

-  We get $\frac{x+\zeta^k y}{x+\zeta^{p-k} y}\in A_k^2\times K^{\times
p}$, so
\be\label{e1111105} 
\Big(\frac{x+\zeta^k y}{\mk q_K}\Big)_{\!\!}
=\Big(\frac{x+\zeta^{p-k}y}{\mk q_K }\Big)_{\!\! K}\mbox{\ for \ }k=2,\dots,p-2,
\ee
which leads  from (\ref{e1111101}) and (\ref{e1111102}) to
\be\label{e1107081}
\Big(\frac{\epsilon_{p-k+1}}{\mk q_K}\Big)_{\!\! K}
=\Big(\frac{\zeta^{k}}{\mk q_K}\Big)_{\!\!
K}\Big(\frac{\epsilon_{k+1}}{\mk q_K}\Big)_{\!\! K} 
\mbox{\ for\ } k=2,\dots,p-2.
\ee

- In the other hand,  from (\ref{e1111103}) we  have    
\be\label{e1107082}
 \epsilon_{p-k-1} = \epsilon_{k+1}:
\ee
From (\ref{e1107081}) and (\ref{e1107082}) we derive that
\begin{equation}\label{e1108211}
\Big(\frac{\epsilon_{p-k-1}}{\mk q_K}\Big)_{\!\! K}
=\Big(\frac{\zeta^{-k}}{\mk q_K}\Big)_{\!\!
K}\Big(\frac{\epsilon_{p-k+1}}{\mk q_K}\Big)_{\!\! K}
\mbox{\ for\ k=2,\dots,p-2}.
\ee

- We get  for the values $k=2k'$ even 
$$\Big(\frac{\epsilon_{p-2k'-1}}{\mk q_K}\Big)_{\!\! K}
=\Big(\frac{\zeta^{-2k'}}{\mk q_K}\Big)_{\!\!
K}\Big(\frac{\epsilon_{p-2k'+1}}{\mk q_K}\Big)_{\!\! K}
\mbox{\ for\ }1\leq k'\leq \frac{p-3}{2}.$$
 Observing that $\epsilon_{p-1}=1$, so 
$\Big(\frac{\epsilon_{p-1}}{\mk q_K}\Big)_{\!\! K}=1$
we get  inductively 
$$\Big(\frac{\epsilon_{p-2k'-1}}{\mk q_K}\Big)_{\!\! K}
=\Big(\frac{\zeta^{-\sum_{j=1}^{k'} 2j}}{\mk q_K}\Big)_{\!\! K}
\Big(\frac{\epsilon_{p-1}}{\mk q_K}\Big)_{\!\! K}
\mbox{\ for\ } k'=1,2,\dots, \frac{p-3}{2},$$
so
$$\Big(\frac{\epsilon_{p-2k'-1}}{\mk q_K}\Big)_{\!\! K}
=\Big(\frac{\zeta^{-k'(k'+1)}}{\mk q_K}\Big)_{\!\! K}
\mbox{\ for\ }0\leq k'\leq \frac{p-3}{2}.$$
 
- We get for the odd values $k=2k'+1$
$$\Big(\frac{\epsilon_{p-(2k'+1)-1)}}{\mk q_K}\Big)_{\!\! K}
=\Big(\frac{\zeta^{-(2k'+1)}}{\mk q_K}\Big)_{\!\! K}
\Big(\frac{\epsilon_{p-(2k'+1)+1}}{\mk q_K}\Big)_{\!\! K}
\mbox{\ for\ }k' = \frac{p-3}{2},\frac{p-5}{2}\dots,1,$$
so 
$$\Big(\frac{\epsilon_{p-2k'}}{\mk q_K}\Big)_{\!\! K}
=\Big(\frac{\zeta^{2k'+1}}{\mk q_K}\Big)_{\!\! K}
\Big(\frac{\epsilon_{p-2k'-2}}{\mk q_K}\Big)_{\!\! K}
\mbox{\ for\ } k' = \frac{p-3}{2},\frac{p-5}{2}\dots,1.$$
Observing that $\epsilon_1=1$, so 
$\Big(\frac{\epsilon_1}{\mk q_K}\Big)_{\!\! K}=1$
we get for $k'=\frac{p-3}{2}$, so $2k'+1=p-2$,
$$\Big(\frac{\epsilon_{3}}{\mk q_K}\Big)_{\!\! K}
=\Big(\frac{\zeta^{-2}}{\mk q_K}\Big)_{\!\! K}
\Big(\frac{\epsilon_{1}}{\mk q_K}\Big)_{\!\! K},$$
and for $k'=\frac{p-5}{2}$
$$\Big(\frac{\epsilon_{5}}{\mk q_K}\Big)_{\!\! K}
=\Big(\frac{\zeta^{-4}}{\mk q_K}\Big)_{\!\! K}
\Big(\frac{\epsilon_{3}}{\mk q_K}\Big)_{\!\! K},$$
and so on.

- Let us define  $k":=\frac{p-1}{2}-k'$, we get 
$$2k'+1=p-2k", \mbox{\ for\ } k'=\frac{p-3}{2},\dots,1\mbox{\
corresponding to \ } k"=1, \dots,\frac{p-3}{2}.$$
It follows that  
$$\Big(\frac{\epsilon_{p-2k'}}{\mk q_K}\Big)_{\!\! K}
=\Big(\frac{\zeta^{\sum_{j=1 }^{k"} -2j}}{\mk q_K}
\Big)_{\!\! K}\Big(\frac{\epsilon_1}{\mk q_K}\Big)_{\!\! K}
\mbox{\ for\ } k'= \frac{p-3}{2},\frac{p-5}{2},\dots, 1,$$
so
$$\Big(\frac{\epsilon_{p-2k'}}{\mk q_K}\Big)_{\!\! K}
=\Big(\frac{\zeta^{-k"(k"+1)}}{\mk q_K}
\Big)_{\!\! K}\mbox {\ for\ }1\leq k'\leq \frac{p-3}{2},$$
so
$$\Big(\frac{\epsilon_{p-2k'}}{\mk q_K}\Big)_{\!\! K}
=\Big(\frac{\zeta^{-(\frac{p-1}{2}-k')(\frac{p-1}{2}-k'+1)}}{\mk q_K}
\Big)_{\!\! K}\mbox {\ for\ }1\leq k'\leq \frac{p-3}{2},$$
and finally 
$$\Big(\frac{\epsilon_{p-2k'}}{\mk q_K}\Big)_{\!\! K}
=\Big(\frac{\zeta^{\frac{1}{4}-k'^2}}{\mk q_K}
\Big)_{\!\! K}\mbox {\ for\ }1\leq k'\leq \frac{p-3}{2}.$$

(ii) Let us suppose that $q\equiv 1\mod p^2$: then $\Big(\frac{\zeta}{\mk
q_K}\Big)_{\!\! K}=1$ 
and from relation (\ref{e1108211}) we get
$$\Big(\frac{\epsilon_{p-k-1}}{\mk q_K}\Big)_{\!\! K}
=\Big(\frac{\epsilon_{p-k+1}}{\mk q_K}\Big)_{\!\! K}\mbox{\ for\ }
k=2,\dots, p-2.$$
In the other hand we have  $\Big(\frac{\epsilon_{p-1}}{\mk q_K}\Big)_{\!\!
K}
=\Big(\frac{\epsilon_{1}}{\mk q_K}\Big)_{\!\! K}=1$ and so 
$$\Big(\frac{\epsilon_{j}}{\mk q_K}\Big)_{\!\! K}=1\mbox{\  for\
}j=1,\dots,p-1.$$
A straightforward computation shows  that 
$\Big(\frac{\epsilon_1\dots\epsilon_{p-1}}{\mk q_K}\Big)_{\!\! K}
=\Big(\frac{1+\zeta}{\mk q_K}\Big)_{\!\! K}$ and we  
 derive that $$\Big(\frac{1+\zeta}{\mk q_K}\Big)_{\!\! K}=1,$$
and finally that 
$$\Big(\frac{1+\zeta^j}{\mk q_K}\Big)_{\!\! K}=1\mbox{\ for\
}j=1,\dots,p-1.$$
which achieves the proof for $p^2|q-1$.
\end{proof}
\end{thm}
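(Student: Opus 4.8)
The plan is to exploit the two structural facts established just before the theorem: on one hand, the congruence $x+\zeta^k y \equiv x(1+\zeta^{k+1}) \pmod{\mk q_K}$ (Lemma \ref{l1107011}(i)) together with $x+y\in K^{\times p}$; on the other, the decomposition $x+\zeta^k y = A_k B_k \alpha^p$ into a ``minus'' piece $A_k$ (with $A_k^{s_{-1}+1}\in K^{\times p}$), a ``plus'' piece $B_k$ (with $B_k^{s_{-1}-1}\in K^{\times p}$) and a $p$th power. First I would record the symmetry $\epsilon_{p-a}=\epsilon_a$ from (\ref{e1111103}) and write down (\ref{e1111101}) and its image (\ref{e1111102}) under $\zeta\mapsto\zeta^{p-1}$, so that the problem is reduced to comparing $\bigl(\tfrac{x+\zeta^k y}{\mk q_K}\bigr)_{\!K}$ with $\bigl(\tfrac{x+\zeta^{p-k}y}{\mk q_K}\bigr)_{\!K}$.

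The key technical step is to show $\bigl(\tfrac{A_k}{\mk q_K}\bigr)_{\!K}=1$. For this I would set $\bigl(\tfrac{A_k}{\mk q_K}\bigr)_{\!K}=\zeta^w$, apply $s_{-1}$ and use $A_k^{s_{-1}}\equiv A_k^{-1}$ mod $p$th powers to get $\bigl(\tfrac{A_k}{\mk q_K}\bigr)_{\!K}=\bigl(\tfrac{A_k}{s_{-1}(\mk q_K)}\bigr)_{\!K}$, hence $\bigl(\tfrac{A_k}{\mk q_K\,s_{-1}(\mk q_K)}\bigr)_{\!K}=\zeta^{2w}$. Since $c\ell(\mk q_K)\in C\ell^-$, the ideal $\mk q_K\,s_{-1}(\mk q_K)$ becomes principal after raising to a power $n$ prime to $p$, say equal to $\beta\Z_K$; because $A_k$ is a $p$-primary pseudo-unit, the Artin--Hasse (or product formula / reciprocity) argument gives $\bigl(\tfrac{A_k}{\beta}\bigr)_{\!K}=1$, whence $2nw\equiv 0$ and $w=0$. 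This yields $\tfrac{x+\zeta^k y}{x+\zeta^{p-k}y}\in A_k^{\,2}\cdot K^{\times p}$ and therefore the equality of symbols (\ref{e1111105}).

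From (\ref{e1111105}) combined with (\ref{e1111101})--(\ref{e1111102}) I get relation (\ref{e1107081}), $\bigl(\tfrac{\epsilon_{p-k+1}}{\mk q_K}\bigr)_{\!K}=\bigl(\tfrac{\zeta^{k}}{\mk q_K}\bigr)_{\!K}\bigl(\tfrac{\epsilon_{k+1}}{\mk q_K}\bigr)_{\!K}$, and feeding in $\epsilon_{p-k-1}=\epsilon_{k+1}$ from (\ref{e1107082}) produces the three-term recursion (\ref{e1108211}) relating the symbols of $\epsilon_{p-k-1}$, $\epsilon_{p-k+1}$ and a power of $\zeta$. For part (i) one then separates the even values $k=2k'$ and odd values $k=2k'+1$: the even chain telescopes downward from $\epsilon_{p-1}=1$, giving $\bigl(\tfrac{\epsilon_{p-2k'-1}}{\mk q_K}\bigr)_{\!K}=\bigl(\tfrac{\zeta^{-\sum_{j\le k'}2j}}{\mk q_K}\bigr)_{\!K}=\bigl(\tfrac{\zeta^{-k'(k'+1)}}{\mk q_K}\bigr)_{\!K}$; the odd chain telescopes the other way from $\epsilon_1=1$, and after the substitution $k''=\tfrac{p-1}{2}-k'$ and simplifying $-k''(k''+1)$ one obtains $\bigl(\tfrac{\epsilon_{p-2k'}}{\mk q_K}\bigr)_{\!K}=\bigl(\tfrac{\zeta^{1/4-k'^2}}{\mk q_K}\bigr)_{\!K}$ (the exponent $\tfrac14$ makes sense since $2$ is invertible mod $p$). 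That $q$ is not $p$-principal when $p^2\nmid q-1$ is immediate from Corollary \ref{c2}. For part (ii), when $p^2\mid q-1$ we have $\bigl(\tfrac{\zeta}{\mk q_K}\bigr)_{\!K}=1$, so (\ref{e1108211}) collapses to $\bigl(\tfrac{\epsilon_{p-k-1}}{\mk q_K}\bigr)_{\!K}=\bigl(\tfrac{\epsilon_{p-k+1}}{\mk q_K}\bigr)_{\!K}$; starting from $\bigl(\tfrac{\epsilon_{p-1}}{\mk q_K}\bigr)_{\!K}=\bigl(\tfrac{\epsilon_1}{\mk q_K}\bigr)_{\!K}=1$ this forces all $\bigl(\tfrac{\epsilon_j}{\mk q_K}\bigr)_{\!K}=1$, and since $\prod_{j=1}^{p-1}\epsilon_j$ differs from $1+\zeta$ by a known power of $\zeta$ times a $p$th power one concludes $\bigl(\tfrac{1+\zeta}{\mk q_K}\bigr)_{\!K}=1$, hence $\bigl(\tfrac{1+\zeta^j}{\mk q_K}\bigr)_{\!K}=1$ for all $j$ by conjugating. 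The main obstacle is the vanishing $\bigl(\tfrac{A_k}{\mk q_K}\bigr)_{\!K}=1$: it needs both the hypothesis $c\ell(\mk q_K)\in C\ell^-$ (to split off the minus part into a principal ideal after a prime-to-$p$ power) and the $p$-primarity of the pseudo-unit $A_k$ feeding into a reciprocity argument, so I would make sure the decomposition $x+\zeta^k y=A_kB_k\alpha^p$ and the $p$-primarity of its factors are carefully justified first.
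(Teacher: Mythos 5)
Your proposal follows essentially the same route as the paper's own proof: the same decomposition $x+\zeta^k y=A_kB_k\alpha^p$ into plus and minus pseudo-unit parts, the same use of $c\ell(\mk q_K)\in C\ell^-$ together with $p$-primarity and Artin--Hasse reciprocity to force $\big(\frac{A_k}{\mk q_K}\big)_{\! K}=1$, and the same telescoping of the even and odd chains from $\epsilon_{p-1}=\epsilon_1=1$ (with the collapse when $p^2\mid q-1$ handled identically). The argument is correct; only the passing description of (\ref{e1111102}) as the ``image under $\zeta\mapsto\zeta^{p-1}$'' should be read as the substitution $k\mapsto p-k$ in (\ref{e1111101}) with the ideal $\mk q_K$ unchanged, which is what you in fact use.
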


\subsection{Some properties of  the  primes $q$ dividing 
$\frac{(x^p+y^p)(y^p+z^p)(z^p+x^p)}{(x+y)(y+z)(z+x)}$} $ $
We assume that $FLT2$ fails for $(p,x,y,z)$.
In this section, we give,  for a possible future use, 
some general  strong properties  of the primes $q$ dividing  
$\frac{(x^p+y^p)(y^p+z^p)(z^p+x^p)}{(x+y)(y+z)(z+x)}$   in the  second
case of FLT.
Here, we don't assume  that $q$ is $p$-principal or not, 
thus this subsection brings complementary informations to corollary
\ref{c5}. 

Let us define the  totally real cyclotomic units   
$$\varpi_{a}=:\zeta^{(1-a)/2}\cdot\frac{1-\zeta^{a}}{1-\zeta},\ 1 \leq
a\leq p-1,$$
where we observe that $\varpi_1=1$ with this definition.
Recall that the cyclotomic units of $K$ are generated by the $\varpi_a$
for $1<a<\frac{p}{2}$.
With this definition we have $\varpi_a=-\varpi_{p-a}$: indeed we have
$\varpi_a=\zeta^{(1-a)/2}\cdot\frac{1-\zeta^a}{1-\zeta}$ 
and $\varpi_{p-a}=
\zeta^{(1-(p-a))/2}\cdot\frac{1-\zeta^{p-a}}{1-\zeta}=\zeta^{(1+a)/2}\cdot\frac{1-\zeta^{-a}}{1-\zeta}
=\zeta^{1-a)/2}\cdot\frac{\zeta^a-1}{1-\zeta}=-\varpi_a$.

\begin{lem}\label{l1108221}
Assume that FLT2 fails for $(p,x,y,z)$ with $p|y$ . 
Let $\mk q_K$ be a prime  ideal of $\Z_K$ such that $x\zeta+y\equiv 0 \mod
\mk q_K$ (or $z\zeta+  y\equiv 0\mod \mk q_K$).
Then
$$q\equiv 1\mod p^2 \mbox{\ and\ } \Big(\frac{\zeta}{\mk q_K}\Big)_{\!\!
K}=\Big(\frac{p}{\mk q_K}\Big)_{\!\! K}
=\Big(\frac{1-\zeta}{\mk q_K}\Big)_{\!\! K}=1.$$

\begin{proof}$ $

- Suppose that $x\zeta +y\equiv 0\mod \mk q_K$.
We have $q|z$, so $q\equiv 1\mod p^2$ from First Furtwangler's theorem, so
$\Big(\frac{\zeta}{\mk q_K}\Big)_{\!\! K}=1$ and  
$\Big(\frac{x}{\mk q_K}\Big)_{\!\! K}=\Big(\frac{y}{\mk q_K}\Big)_{\!\!
K},$ so
$\Big(\frac{x+z}{\mk q_K}\Big)_{\!\! K}=\Big(\frac{y+z}{\mk
q_K}\Big)_{\!\! K},$   
so 
$\Big(\frac{p^{\nu p-1} y_0^p}{\mk q_K}\Big)_{\!\!
K}=\Big(\frac{x_0^p}{\mk q_K}\Big)_{\!\! K},$  
and finally 
$\Big(\frac{p}{\mk q_K}\Big)_{\!\! K}=1.$
In the other hand, we have 
$$x+y=z_0^p\equiv x(1-\zeta)\equiv (x+z)(1-\zeta)\equiv p^{\nu
p-1}y_0^p(1-\zeta)\mod \mk q_K,$$
so
$$\Big(\frac{1-\zeta}{\mk q_K}\Big)_{\!\! K}=1.$$

- Suppose  that $z\zeta+y\equiv 0\mod \mk q_K$.  The proof  is similar
with $z$ in place of $x$.

\end{proof}
\end{lem}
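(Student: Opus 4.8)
The plan is to combine the cyclotomic factorization $x^p+y^p=\prod_{k=0}^{p-1}(x+\zeta^k y)$ with the Barlow--Abel relations of the Fermat counterexample, reading off every assertion through the $p$th power residue symbol $\big(\frac{\cdot}{\mk q_K}\big)_{\!\! K}$. First I would locate the rational prime $q$ below $\mk q_K$. From $x\zeta+y\equiv 0\bmod\mk q_K$ and $x\zeta+y=\zeta\,(x+\zeta^{p-1}y)$ we get $\mk q_K\mid x+\zeta^{p-1}y$, hence $\mk q_K$ divides $\prod_{k=0}^{p-1}(x+\zeta^k y)=x^p+y^p=-z^p$; as $\mk q_K$ is prime this forces $\mk q_K\mid z$, i.e.\ $q\mid z$. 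Coprimality of $x,y,z$ then gives $q\nmid xy$ and $q\neq p$ (otherwise $p\mid z$ while $p\mid y$). Next, $q\nmid x+y$: if $q\mid x+y$ then $x\equiv-y\bmod\mk q_K$, so $x\zeta+y\equiv y(1-\zeta)\bmod\mk q_K$, and since $q\nmid y$ this would force $\mk q_K\mid 1-\zeta$, i.e.\ $q=p$, a contradiction. Now $q\mid x^p+y^p$ with $q\nmid xy(x+y)$ shows $q\mid\frac{x^p+y^p}{x+y}$ and forces the multiplicative order of $x/y$ modulo $q$ to equal $2p$, so $q\equiv 1\bmod p$ and $\mk q_K$ has residue degree $1$. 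Applying the first Furtw\"angler theorem to the Fermat equation written as $z^p+y^p+x^p=0$, for the prime $q\mid z$, yields $q^{p-1}\equiv 1\bmod p^2$; combined with $q\equiv 1\bmod p$ this gives $q\equiv 1\bmod p^2$, whence $\big(\frac{\zeta}{\mk q_K}\big)_{\!\! K}=\zeta^{(q-1)/p}=1$.

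The two remaining symbols then follow by bookkeeping. From $x\zeta\equiv-y\bmod\mk q_K$, together with $\big(\frac{\zeta}{\mk q_K}\big)_{\!\! K}=1$ and $\big(\frac{-1}{\mk q_K}\big)_{\!\! K}=1$ ($p$ odd), I get $\big(\frac{x}{\mk q_K}\big)_{\!\! K}=\big(\frac{y}{\mk q_K}\big)_{\!\! K}$. Reducing the Barlow--Abel relations $x+z=p^{\nu p-1}y_0^p$ and $y+z=x_0^p$ modulo $\mk q_K$ and using $z\equiv 0\bmod\mk q_K$, this becomes $\big(\frac{p^{\nu p-1}y_0^p}{\mk q_K}\big)_{\!\! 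K}=\big(\frac{x}{\mk q_K}\big)_{\!\! K}=\big(\frac{y}{\mk q_K}\big)_{\!\! K}=\big(\frac{x_0^p}{\mk q_K}\big)_{\!\! K}=1$; since $p$th powers are trivial for the symbol and $\nu p-1\equiv-1\bmod p$, this collapses to $\big(\frac{p}{\mk q_K}\big)_{\!\! K}=1$. For the last symbol, reducing $x+y=z_0^p$ and using $y\equiv-\zeta x$ and $x\equiv x+z\equiv p^{\nu p-1}y_0^p\bmod\mk q_K$ gives $z_0^p\equiv p^{\nu p-1}y_0^p(1-\zeta)\bmod\mk q_K$, hence $\big(\frac{1-\zeta}{\mk q_K}\big)_{\!\! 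K}=\big(\frac{p}{\mk q_K}\big)_{\!\! K}^{-1}=1$. The alternative hypothesis $z\zeta+y\equiv 0\bmod\mk q_K$ forces $q\mid x$ by the same factorization argument, and the whole computation then repeats with $x$ and $z$ (and $z_0$ and $x_0$) interchanged.

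The step I expect to be the genuine obstacle is securing $q\equiv 1\bmod p^2$, equivalently $\big(\frac{\zeta}{\mk q_K}\big)_{\!\! K}=1$: one must recognize $q$ as a ``deeper'' prime divisor of the Fermat counterexample, namely $q\mid z$ rather than merely a divisor of some cyclotomic factor, so the ideal-theoretic reduction $\mk q_K\mid x\zeta+y\Rightarrow q\mid z$ has to be carried out with care, and Furtw\"angler's first theorem must be invoked for the Fermat equation itself with the roles of $x$ and $z$ swapped, not in the bare SFLT2 form (where it would only give $q^f\equiv 1\bmod p^2$ with $f$ possibly $>1$). Once $q\equiv 1\bmod p^2$ is in hand, the rest is a mechanical chain of identities between residue symbols in which $p$th powers simply vanish.
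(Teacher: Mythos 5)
Your proposal is correct and follows essentially the same route as the paper: reduce $x\zeta+y\equiv 0$ to $q\mid z$, invoke Furtw\"angler's first theorem (with $x$ and $z$ interchanged) to get $q\equiv 1\bmod p^2$ and hence $\big(\frac{\zeta}{\mk q_K}\big)_{\!\!K}=1$, then read the Barlow--Abel relations $x+z=p^{\nu p-1}y_0^p$, $y+z=x_0^p$, $x+y=z_0^p$ through the residue symbol to get $\big(\frac{p}{\mk q_K}\big)_{\!\!K}=\big(\frac{1-\zeta}{\mk q_K}\big)_{\!\!K}=1$. You merely supply details the paper leaves implicit (why $q\mid z$, why $q\equiv 1\bmod p$ upgrades $q^{p-1}\equiv 1\bmod p^2$ to $q\equiv 1\bmod p^2$); the only blemish is the exponent in $\big(\frac{1-\zeta}{\mk q_K}\big)_{\!\!K}=\big(\frac{p}{\mk q_K}\big)_{\!\!K}^{-1}$, which should be $\big(\frac{p}{\mk q_K}\big)_{\!\!K}$, immaterial since that symbol is $1$.
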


\begin{lem}\label{l1107011}
Suppose that $FLT2$ fails for $(p,x,y,z)$ with $p|y$ .
Let $q\not= p$ be a prime and  $\mk q_K$ be a  prime ideal of $\Z_K$ over
$q$. Then we have 
for $k=1,\dots,p-2$: 

(i) If $\mk q_K$ divides $x\zeta+y$  then $\Big(\frac{x+\zeta^k y}{\mk
q_K}\Big)_{\!\! K}
=\Big(\frac{\varpi_{k+1}}{\mk q_K}\Big)_{\!\! K}$. 

(ii) If $ \mk q_K$ divides $z\zeta+y$  then $\Big(\frac{z+\zeta^k y}{\mk
q_K}\Big)_{\!\! K}
=\Big(\frac{\varpi_{k+1}}{\mk q_K}\Big)_{\!\! K}$.

(iii) If $\mk q_K$ divides $x\zeta+z$ and $p\ |\ y$   
then $\Big(\frac{x+\zeta^k z}{\mk q_K}\Big)_{\!\! K}\Big(\frac{p }{\mk
q_K}\Big)_{\!\! K}
=\Big(\frac{\varpi_{k+1}}{\mk q_K}\Big)_{\!\! K}$.
\begin{proof}$ $

(i) From  $x\zeta + y\equiv 0\mod \mk q_K$ we get 
 
$$x+\zeta^k y\equiv x(1-\zeta^{k+1})\mod \mk q_K,\ k=1,\dots,p-2.$$
thus 
$$\frac{x+\zeta^k y}{x+y}\equiv \frac{1-\zeta^{k+1}}{1-\zeta}\mod \mk q_K,
\mbox{\ for\ } k=1,\dots,p-2.$$
In the other hand
$\varpi_{k+1}=\zeta^{(1-(k+1))/2}\cdot\frac{1-\zeta^{k+1}}{1-\zeta}$ 
is a totally real cyclotomic unit, 
so  
$$\frac{x+\zeta^k y}{x+y}\equiv \varpi_{k+1}\zeta^{k/2}\mod\mk q_K,\mbox{\
for\ } k=1,\dots p-2,$$ 
so 
$$\Big (\frac{x+\zeta^k y}{\mk q_K}\Big)_{\!\! K}
=\Big(\frac{\varpi_{k+1}}{\mk q_K}\Big)_{\!\!
K}\Big(\frac{\zeta^{k/2}}{\mk q_K}\Big)_{\!\! K} 
\mbox{\ for\ } k=1,\dots,p-2,$$
because $x+y\in K^{\times p}$ and finally
$$\Big (\frac{x+\zeta^k y}{\mk q_K}\Big)_{\!\! K}
=\Big(\frac{\varpi_{k+1}}{\mk q_K}\Big)_{\!\! K}\mbox{\ for\ }
k=1,\dots,p-2,$$
because $q\equiv 1\mod p^2$ obtained by the first Theorem of Furtw\"angler.

(ii) The proof is similar to (i) with $z$ in place of $x$. 

(iii) In that case we have $x+z=p^{\nu p-1}y_0^p$ with $\nu>0$ 
and so $x+z\in p^{-1}K^{\times p}$ and  $p^2|q-1$  as shown in third
paragraph of proof of  lemma \ref{l3}.
\end{proof}
\end{lem}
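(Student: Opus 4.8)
The plan is to reduce everything modulo $\mk q_K$ by means of the congruence defining $\mk q_K$, to recognize the real cyclotomic unit $\varpi_{k+1}$ inside the resulting ratio, and to dispose of the remaining ``nuisance'' factors with the earlier lemmas.

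For (i) I would start from $\mk q_K\mid x\zeta+y$, i.e. $y\equiv -x\zeta\mod\mk q_K$. Substituting, for $1\le k\le p-2$,
$$x+\zeta^k y\equiv x(1-\zeta^{k+1})\mod\mk q_K,\qquad x+y\equiv x(1-\zeta)\mod\mk q_K,$$
so that $\frac{x+\zeta^k y}{x+y}\equiv \frac{1-\zeta^{k+1}}{1-\zeta}\mod\mk q_K$, provided all these elements are prime to $\mk q_K$: since $x,y$ are coprime, $q\mid x$ would force $q\mid y$, so $q\not|\ x$; since $q\mid N_{K/\Q}(x\zeta+y)=\frac{x^p+y^p}{x+y}$ while $\gcd\big(\frac{x^p+y^p}{x+y},x+y\big)\mid p$ and $q\ne p$, also $q\not|\ x+y$; and $1-\zeta^{k+1}$ is prime to $\mk q_K$ because $k+1\not\equiv0\mod p$ while $\mk q_K$ lies over $q\ne p$. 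By the definition $\varpi_{k+1}=\zeta^{(1-(k+1))/2}\frac{1-\zeta^{k+1}}{1-\zeta}=\zeta^{-k/2}\frac{1-\zeta^{k+1}}{1-\zeta}$, the congruence reads $\frac{x+\zeta^k y}{x+y}\equiv\zeta^{k/2}\varpi_{k+1}\mod\mk q_K$, whence, taking $p$th power residue symbols,
$$\Big(\frac{x+\zeta^k y}{\mk q_K}\Big)_{\!\!K}\Big(\frac{x+y}{\mk q_K}\Big)_{\!\!K}^{-1}=\Big(\frac{\zeta}{\mk q_K}\Big)_{\!\!K}^{k/2}\Big(\frac{\varpi_{k+1}}{\mk q_K}\Big)_{\!\!K}.$$
The Barlow--Abel relation $x+y=z_0^p$ gives $\big(\frac{x+y}{\mk q_K}\big)_{\!K}=1$, and $q\equiv1\mod p^2$ (Lemma \ref{l1108221}, equivalently the first Furtw\"angler theorem applied to $q\mid z$) gives $\big(\frac{\zeta}{\mk q_K}\big)_{\!K}=1$; the stated identity follows.

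Part (ii) is the identical computation with $z$ in place of $x$: $\mk q_K\mid z\zeta+y$ forces $q\mid\frac{y^p+z^p}{y+z}=x_1^p$, hence $q\mid x$, while $y+z=x_0^p$ and $q\equiv1\mod p^2$ (again Lemma \ref{l1108221}) are used exactly as before. For (iii) the one new feature is the power of $p$: $\mk q_K\mid x\zeta+z$ forces $q\mid\frac{x^p+z^p}{x+z}=p\,y_1^p$, hence $q\mid y$, while $q\ne p$ and $\gcd\big(\frac{x^p+z^p}{x+z},x+z\big)\mid p$ give $q\not|\ x+z$, so the argument in the third paragraph of the proof of Lemma \ref{l3} yields $p^2\mid q-1$ and $\big(\frac{\zeta}{\mk q_K}\big)_{\!K}=1$. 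Proceeding as before one gets $\frac{x+\zeta^k z}{x+z}\equiv\zeta^{k/2}\varpi_{k+1}\mod\mk q_K$, and now $x+z=p^{\nu p-1}y_0^p$ gives $\big(\frac{x+z}{\mk q_K}\big)_{\!K}=\big(\frac{p}{\mk q_K}\big)_{\!K}^{\nu p-1}=\big(\frac{p}{\mk q_K}\big)_{\!K}^{-1}$, since that symbol has order dividing $p$; substituting and rearranging produces $\big(\frac{x+\zeta^k z}{\mk q_K}\big)_{\!K}\big(\frac{p}{\mk q_K}\big)_{\!K}=\big(\frac{\varpi_{k+1}}{\mk q_K}\big)_{\!K}$.

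I do not expect a real obstacle: the argument is essentially bookkeeping. The one point that needs care is keeping track of which elements are prime to $\mk q_K$ — this is exactly why $k$ must stop at $p-2$, since for $k=p-1$ one has $x+\zeta^{p-1}y\equiv0\mod\mk q_K$ and the symbol is undefined — together with checking that each of the three nuisance factors ($\zeta^{k/2}$, the perfect $p$th power $x+y$ or $y+z$, and the $p$-power occurring in $x+z$) is cancelled by the appropriate earlier input: Furtw\"angler's first theorem (Lemma \ref{l1108221}), the Barlow--Abel relations, and Lemma \ref{l3}.
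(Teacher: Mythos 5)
Your proposal is correct and follows essentially the same route as the paper's proof: reduce modulo $\mk q_K$ using $y\equiv -x\zeta$ (resp. $z\zeta+y$, $x\zeta+z$), recognize $\varpi_{k+1}$ via $\frac{x+\zeta^k y}{x+y}\equiv\zeta^{k/2}\varpi_{k+1}\bmod\mk q_K$, and cancel the nuisance factors using $x+y\in K^{\times p}$ (Barlow--Abel), $q\equiv 1\bmod p^2$ (first Furtw\"angler theorem, i.e. Lemma \ref{l1108221}), and, in case (iii), $x+z\in p^{-1}K^{\times p}$ together with the argument from the proof of Lemma \ref{l3}. Your extra bookkeeping (checking that $x$, $x+y$ and $1-\zeta^{k+1}$ are prime to $\mk q_K$, and noting why $k=p-1$ must be excluded) is a welcome addition but does not change the argument.
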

\begin{rema}
{\rm This  property of the primes $q$ dividing $\frac{x^p+y^p}{x+y}$  
(or  $\frac{z^p+y^p}{z+y}$, or  $\frac{x^p+z^p}{x+z})$   is strong
because $x\zeta+ y$ (or $z\zeta+ y$,
 or $\frac{x+\zeta z}{1-\zeta})$),  
and $\varpi_{k+1}$ of $\Z_K$  
are  pseudo-units not linearly connected in the action of $\F_p[g]$.
}
\end{rema}

\begin{thm}\label{t1107231}
Suppose  that   the second case of FLT fails for $(p,x,y,z)$ with $p|y$.
Let $q$ be a prime dividing $\frac{x^p+y^p}{x+y}$ (or
$\frac{z^p+y^p}{z+y}$ 
or $\frac{x^p+z^p}{x+z}$).
Let $\mk q_K$ be \underline{the} prime ideal of $\Z_K$ over $q$
dividing $x\zeta+ y$ (or $z\zeta+y$ or $x\zeta+z$). 
Assume that  the $p$-class  $c\ell(\mk q_K)\in C\ell^-$.\footnote{See  the paragraph of
notations  \ref{n1} for the meaning of the $p$-class $c\ell$ and the
$p$-class group $C\ell^-$.} 
\footnote{Note that the assumption   $c\ell(\mk q_K)\in C\ell^-$ is automatically 
verified as soon as Vandiver's conjecture holds for $p$.}
Then we have 
\bn
\item 
$q\equiv 1\mod p^2$   and    $q$  is $p$-principal.
\item
$\mk q_K$ verifies the power symbols following values:
\bn
\item 
If $\mk q_K|x\zeta+y$ (or $z\zeta+y$) then $$\Big(\frac{p}{\mk q_K}\Big)_{\!\! K}=\Big(\frac{1-\zeta^j}{\mk q_K}\Big)_{\!\! K}=1\mbox{\ for\ } j=1,\dots,p-1.$$
\item
If $\mk q_K|x\zeta+z$  then $$\Big(\frac{p}{\mk q_K}\Big)_{\!\! K}=\Big(\frac{1-\zeta^j}{\mk q_K}\Big)_{\!\! K} \mbox{\ for\ } j=1,\dots,p-1.$$
\en
\en
\begin{proof}$ $

(a) Suppose at first that $q|\frac{x^p+y^p}{x+y}$: From Furtwangler's
First theorem,  if a prime  $q|\frac{x^p+y^p}{x+y}\frac{y^p+z^p}{y+z}$
then $q\equiv 1\mod p^2$.
We derive that $\Big(\frac{\zeta}{\mk q_K}\Big)_{\!\! K}=1$ and from lemma
\ref{l1108221}  that  $\Big(\frac{p}{\mk q_K}\Big)_{\!\! K}=1$. 

- From   previous lemma \ref{l1107011},  we have 
$$\Big (\frac{x+\zeta^k y}{\mk q_K}\Big)_{\!\! K}
=\Big(\frac{\varpi_{k+1}}{\mk q_K}\Big)_{\!\! K} 
\mbox{\ for\ } k=1,\dots,p-2,$$
and also, with $p-k$ in place of $k$, 
$$\Big (\frac{x+\zeta^{p-k} y}{\mk q_K}\Big)_{\!\! K}
=\Big(\frac{\varpi_{p-k+1}}{\mk q_K}\Big)_{\!\! K} 
\mbox{\ for\ } p-k=1,\dots,p-2.$$

-   The relation (\ref{e1111105}) of theorem \ref{t1107232} can be obtained with exactly the same proof
\be\label{e1111107} 
\Big(\frac{x+\zeta^k y}{\mk q_K}\Big)_{\!\!}
=\Big(\frac{x+\zeta^{p-k}y}{\mk q_K }\Big)_{\!\! K}\mbox{\ for \ }k=2,\dots,p-2,
\ee
which leads to
$$\Big(\frac{\varpi_{k+1}}{\mk q_K}\Big)_{\!\! K}
=\Big(\frac{\varpi_{p-k+1}}{\mk q_K}\Big)_{\!\! K}  \mbox{\ for\ }
k=2,\dots,p-2.$$

- We have  seen above that  $\varpi_{k+1}= -\varpi_{p-k-1}$
so 
$$\Big(\frac{\varpi_{k+1}}{\mk q_K}\Big)_{\!\! K}=
\Big(\frac{\varpi_{p-(k+1)}}{\mk q_K}
\Big)_{\!\! K}\mbox{\ for\ }k=2,\dots,p-2.$$
Then, gathering these two relations,  we get 
$$\Big(\frac{\varpi_{p-k+1}}{\mk q_K}\Big)_{\!\! K}
=\Big(\frac{\varpi_{p-k-1}}{\mk q_K}\Big)_{\!\! K}  \mbox{\ for\ }
k=2,\dots,p-2.$$

-  Starting from $k=2$ we get  for $k=2,4,\dots,p-3,$
$$\Big(\frac{\varpi_{p-1}}{\mk q_K}\Big)_{\!\! K}
=\Big(\frac{\varpi_{p-3}}{\mk q_K}\Big)_{\!\! K} =\dots =
\Big(\frac{\varpi_{2}}{\mk q_K}\Big)_{\!\! K} =1,$$
because we get directly $\Big(\frac{\varpi_{p-1}}{\mk q_K}\Big)_{\!\!
K}=1$ from its definition.
Starting from $k=3$ we get for $k=3,5,\dots, p-2,$
$$\Big(\frac{\varpi_{p-2}}{\mk q_K}\Big)_{\!\! K}
=\Big(\frac{\varpi_{p-4}}{\mk q_K}\Big)_{\!\! K} =\dots =
\Big(\frac{\varpi_{1}}{\mk q_K}\Big)_{\!\! K} =1,$$
because we get directly $\Big(\frac{\varpi_1}{\mk q_K}\Big)_{\!\! K}=1$
from its definition. 
Therefore we get $$\Big(\frac{\varpi_i}{\mk q_K}\Big)_{\!\! K}=1\mbox{\
for\ }i=1,\dots, p-1,$$ and finally
we find again $\Big(\frac{p}{\mk q_K}\Big)_{\!\!
K}=\Big(\frac{1-\zeta}{\mk q_K}\Big)_{\!\! K},$ seen in lemma 
\ref{l1108221}. From this lemma we have also  $\Big(\frac{1-\zeta}{\mk
q_K}\Big)_{\!\! K}=1$ if $\mk q_K|x\zeta+y$ (or $\mk q_K|z\zeta+y$).

- The even  $p$-primary units are all generated by the 
$\varpi_i,\ \ i=1,\dots,\frac{p-1}{2}$.
Therefore,  the result 
$\Big(\frac{\varpi_{i}}{\mk q_K}\Big)_{\!\! K}=1\mbox{\ for\
}i=1,\dots,p-1$ obtained 
and the assumption that $c\ell(\mk q_K)\in C\ell^-$ 
imply  that $\mk q_K$ is $p$-principal  
(application of  the  decomposition  and reflection theorems in the $p$-Hilbert class
field of $K$),
if not it should be possible to find  integers $n_1,\dots,
n_{(p-3)/2}\not\equiv 0\mod p,$
such that the   $p$-primary unit $\varpi=\prod_{i=1}^{(p-3)/2}\varpi_i^{n_i}$ verifies 
$\Big(\frac{\varpi}{\mk q_K}\Big)_{\!\! K}\not=1$, contradiction.

(b)  The proof is similar  if $q|\frac{z^p+y^p}{z+y}$.

(c) Suppose at last  that $q|\frac{x^p+z^p}{x+z}$:
 If $\mk q_K|x\zeta+z$ and $p\ |\ y$   then 
$$\Big(\frac{x+\zeta^k z}{\mk q_K}\Big)_{\!\! K}\Big(\frac{p }{\mk
q_K}\Big)_{\!\! K}
=\Big(\frac{\varpi_{k+1}}{\mk q_K}\Big)_{\!\! K},$$ 
(seen in lemma \ref{l1107011} (iii)) and similarly 
$$\Big(\frac{x+\zeta^{p-k} z}{\mk q_K}\Big)_{\!\! K}\Big(\frac{p }{\mk
q_K}\Big)_{\!\! K}
=\Big(\frac{\varpi_{p-k+1}}{\mk q_K}\Big)_{\!\! K},$$ 
so  we get again
$$\Big(\frac{x+\zeta^k z}{x+\zeta^{p-k} z}\Big)_{\!\! K}
=\Big(\frac{\varpi_{k+1}\varpi_{p-k+1}^{-1}}{\mk q_K}\Big)_{\!\! K}.$$
In the other hand $\frac{x+\zeta^k z}{x+\zeta^{p-k} z}= \zeta^k A$  
where $A$ is a $p$-primary pseudo unit with $A^{s_{-1}+1}\in K^{\times p}$.
Then the end of the proof is similar to the previous cases taking into
account that we know that $p^2|q-1$ from an argument in proof of lemma \ref{l3}  as soon  $q\not=p$ dividing
$\frac{x^p+z^p}{x+z}$, so $\Big(\frac{\zeta^k}{\mk q_K}\Big)_{\!\! K}=1$.
\end{proof}
\end{thm}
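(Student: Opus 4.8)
The plan is to run the three cases in parallel. The case $q\mid\frac{z^p+y^p}{z+y}$ reduces to $q\mid\frac{x^p+y^p}{x+y}$ by the symmetry $x\leftrightarrow z$ of the Fermat relation, and the case $q\mid\frac{x^p+z^p}{x+z}$ is handled the same way, the only difference being that $x+z=p^{\nu p-1}y_0^p$ lies in $p^{-1}K^{\times p}$ rather than in $K^{\times p}$, which is exactly what makes a factor $\big(\frac{p}{\mk q_K}\big)_{\!\!K}$ survive in the conclusion there. First I would dispose of $q\equiv 1\bmod p^2$: in every case $q$ divides one of $x,y,z$ (the factor $z_1$, $x_1$ or $y_1$ of the Barlow-Abel decomposition), so applying Furtw\"angler's first theorem to a product such as $y(y+x)$, $z(z+x)$, etc., gives $q^{p-1}\equiv 1\bmod p^2$; moreover $\zeta\equiv -y/x$ (resp. $-z/x$) modulo $\mk q_K$ lies in the prime field $\F_q$, so $\mk q_K$ has residue degree $1$ over $q$ and $q\equiv 1\bmod p$; the two together force $q\equiv 1\bmod p^2$, hence $\big(\frac{\zeta}{\mk q_K}\big)_{\!\!K}=1$. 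In the first two cases Lemma~\ref{l1108221} also supplies $\big(\frac{p}{\mk q_K}\big)_{\!\!K}=\big(\frac{1-\zeta}{\mk q_K}\big)_{\!\!K}=1$, and in the third case Lemma~\ref{l1108201} (with $q\mid y$) gives $\big(\frac{x}{\mk q_K}\big)_{\!\!K}=1$.

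The arithmetic core is the computation of the symbols $\big(\frac{\varpi_i}{\mk q_K}\big)_{\!\!K}$. By Lemma~\ref{l1107011}, the congruence $x\zeta+y\equiv 0\pmod{\mk q_K}$ (resp. $x\zeta+z\equiv 0$) gives $\big(\frac{x+\zeta^k y}{\mk q_K}\big)_{\!\!K}=\big(\frac{\varpi_{k+1}}{\mk q_K}\big)_{\!\!K}$ for $k=1,\dots,p-2$, with an extra factor $\big(\frac{p}{\mk q_K}\big)_{\!\!K}$ on the left in the third case, and the same identity with $p-k$ in place of $k$. The key point is the vanishing of the ``anti-symmetric'' part, $\big(\frac{x+\zeta^k y}{\mk q_K}\big)_{\!\!K}=\big(\frac{x+\zeta^{p-k}y}{\mk q_K}\big)_{\!\!K}$ for $2\le k\le p-2$, which I would prove exactly as relation~(\ref{e1111105}) in the proof of Theorem~\ref{t1107232}: write the $p$-primary pseudo-unit $x+\zeta^k y$ as $A_kB_k\alpha^p$ with $\alpha\in K^\times$, $A_k^{s_{-1}+1}\in K^{\times p}$ and $B_k^{s_{-1}-1}\in K^{\times p}$, so that $\frac{x+\zeta^k y}{x+\zeta^{p-k}y}\equiv A_k^{2}\bmod K^{\times p}$; since $c\ell(\mk q_K)\in C\ell^-$ the ideal $\mk q_K\,s_{-1}(\mk q_K)$ has order prime to $p$, and since $A_k$ is $p$-primary the Artin-Hasse reciprocity law forces $\big(\frac{A_k}{\mk q_K}\big)_{\!\!K}=1$, whence the identity (in the third case one works instead with $\frac{x+\zeta^k z}{x+\zeta^{p-k}z}=\zeta^kA$, $A$ a $p$-primary pseudo-unit of minus type, and reaches the same conclusion). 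Feeding this into Lemma~\ref{l1107011} gives $\big(\frac{\varpi_{k+1}}{\mk q_K}\big)_{\!\!K}=\big(\frac{\varpi_{p-k+1}}{\mk q_K}\big)_{\!\!K}$; using $\varpi_{k+1}=-\varpi_{p-k-1}$ and $\big(\frac{-1}{\mk q_K}\big)_{\!\!K}=1$ this becomes $\big(\frac{\varpi_{p-k+1}}{\mk q_K}\big)_{\!\!K}=\big(\frac{\varpi_{p-k-1}}{\mk q_K}\big)_{\!\!K}$ for $k=2,\dots,p-2$. Iterating along the even-indexed chain $\varpi_{p-1},\varpi_{p-3},\dots,\varpi_2$ (anchored by $\varpi_{p-1}=-1$) and the odd-indexed chain $\varpi_{p-2},\varpi_{p-4},\dots,\varpi_1$ (anchored by $\varpi_1=1$), I get $\big(\frac{\varpi_i}{\mk q_K}\big)_{\!\!K}=1$ for all $i=1,\dots,p-1$. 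Since $\big(\frac{\zeta}{\mk q_K}\big)_{\!\!K}=1$, the definition of $\varpi_{k+1}$ then forces $\big(\frac{1-\zeta^j}{\mk q_K}\big)_{\!\!K}$ to be independent of $j$, and reading this off against the reductions $x+\zeta^k y\equiv x(1-\zeta^{k+1})$ (resp. $x+\zeta^k z\equiv x(1-\zeta^{k+1})$) modulo $\mk q_K$ and the factorisation $p=\prod_{j=1}^{p-1}(1-\zeta^j)$ yields the stated relations between $\big(\frac{p}{\mk q_K}\big)_{\!\!K}$ and the $\big(\frac{1-\zeta^j}{\mk q_K}\big)_{\!\!K}$ --- all equal to $1$ in the first two cases thanks to Lemma~\ref{l1108221}.

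For the $p$-principality of $q$ I would invoke the decomposition and reflection theorems in the $p$-Hilbert class field of $K$: under the standing hypotheses the $p$-primary units of $K$ in the even part are, modulo $p$-th powers, products of the $\varpi_i$ with $1\le i\le\frac{p-1}{2}$, and they pair non-degenerately with the minus part $C\ell^-$ of the $p$-class group through the power residue symbol. Since $\big(\frac{\varpi_i}{\mk q_K}\big)_{\!\!K}=1$ for every $i$, the class $c\ell(\mk q_K)\in C\ell^-$ pairs trivially with every $p$-primary even unit, hence is trivial modulo $p$-th powers; that is, $\mk q_K$ is $p$-principal.

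The hard part is the middle step, the vanishing $\big(\frac{A_k}{\mk q_K}\big)_{\!\!K}=1$ of the symbol of the minus-type factor of $x+\zeta^k y$: this is exactly where the two structural inputs are indispensable --- the hypothesis $c\ell(\mk q_K)\in C\ell^-$, which makes $\mk q_K^{\,1+s_{-1}}$ principal up to prime-to-$p$ order, and the $p$-primarity of $A_k$ together with Artin-Hasse reciprocity. The concluding $p$-principality step leans on the same reflection-theoretic circle of ideas; everything else is formal bookkeeping with power residue symbols, arranged just as in the proof of Theorem~\ref{t1107232}.
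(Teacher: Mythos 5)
Your proposal reproduces the paper's own proof in all essentials: Lemma \ref{l1108221} and Lemma \ref{l1107011} to translate the symbols of $x+\zeta^k y$ (resp. $x+\zeta^k z$) into symbols of the $\varpi_{k+1}$, the splitting $x+\zeta^k y=A_kB_k\alpha^p$ into plus and minus parts with $\big(\frac{A_k}{\mk q_K}\big)_{\!\! K}=1$ forced by $c\ell(\mk q_K)\in C\ell^-$ and Artin--Hasse, the two chains anchored at $\varpi_1=1$ and $\varpi_{p-1}=-1$ giving $\big(\frac{\varpi_i}{\mk q_K}\big)_{\!\! K}=1$ for all $i$, and the decomposition/reflection argument pairing the even $p$-primary units against $C\ell^-$ to get $p$-principality. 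So there is nothing methodologically new to compare; the one point that needs correction is your treatment of the congruence $q\equiv 1\bmod p^2$ in the third case.

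When $q\mid\frac{x^p+z^p}{x+z}$ you have $q\mid y_1\mid y$, and since $p\mid y$ Furtw\"angler's first theorem cannot be applied to ``a product such as $y(y+x)$'': the paper itself insists on exactly this obstruction inside the proof of Lemma \ref{l3} (``we cannot assert that $q^{p-1}\equiv 1\bmod p^2$, because Furtw\"angler theorem cannot be applied here''). Your residue-degree observation only gives $q\equiv 1\bmod p$, so as written the mod $p^2$ statement is unproved precisely in the case where it is indispensable: in case (c) the quotient $\frac{x+\zeta^k z}{x+\zeta^{p-k}z}$ equals $\zeta^k A$ with $A$ of minus type, and you must have $\big(\frac{\zeta^k}{\mk q_K}\big)_{\!\! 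K}=1$ before the symmetry $\big(\frac{\varpi_{k+1}}{\mk q_K}\big)_{\!\! K}=\big(\frac{\varpi_{p-k+1}}{\mk q_K}\big)_{\!\! K}$ and the ensuing chain argument can start. The gap is repairable, and the paper repairs it by invoking the interior argument of Lemma \ref{l3}: from $q\mid y$ and the Barlow--Abel relations $x+y=z_0^p$, $z+y=x_0^p$ one gets $\big(\frac{x}{\mk q_K}\big)_{\!\! K}=\big(\frac{z}{\mk q_K}\big)_{\!\! K}=1$, and if $\kappa=\frac{q-1}{p}$ were prime to $p$ a B\'ezout relation between $p$ and $\kappa$ (with $\kappa$ even) would force $q\mid x+z$, impossible for $q\neq p$ dividing $\frac{x^p+z^p}{x+z}$; hence $p\mid\kappa$, i.e. $q\equiv 1\bmod p^2$. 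You should replace the Furtw\"angler appeal in case (c) by this argument (your Furtw\"angler step is fine in cases (a) and (b), where $q$ divides $z$ or $x$, both prime to $p$).
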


\begin{rema}
{\rm 
In the case of an hypothetic solution $(x,y,z),\ p|y$ of the FLT2
equation, 
for the primes $q$ with  $c\ell(\mk q_K)\in C\ell^-$ and $\mk
q_K|x\zeta+y$ (or $z\zeta+y)$,  the theorem \ref{t1107231} can be
considered as  a   reciprocal  statement to corollary \ref{c5}
in which $(u,v)=(x,y)$ or $(z,y)$ for $x,y,z, \ p|y$  hypothetic  solution
of the Fermat's equation.
In particular,  we have proved:

{\it If Vandiver's conjecture holds for $p$,
and if the second case of FLT fails for $p$
then all the primes $q\ |\ \frac{(x^p+y^p)(y^p+z^p)(z^p+x^p)}{(x+y)(y+z)(z+x)}$
are $p$-principal.}}
\end{rema}

\paragraph{ Acknowledgments:}
I would like to thank Georges   Gras  for pointing out many errors   
in the preliminary versions and for  suggesting  many  improvements to me
for  the content and form of the article.

Roland Qu\^eme

13 avenue du ch\^ateau d'eau

31490 Brax

France

mailto: roland.queme@wanadoo.fr
\end{document}